\documentclass[12pt]{elsarticle}
\topmargin=-1.5cm \textwidth=16.3cm \textheight=23.5cm
\oddsidemargin=0cm

\usepackage{amsfonts}
\usepackage{amsmath,cancel}
\usepackage{amssymb}
\usepackage{xcolor}
\usepackage{graphicx}
\usepackage{mathrsfs}
\usepackage[normalem]{ulem}
\setcounter{MaxMatrixCols}{10}
\pagestyle{empty}

\newtheorem{theorem}{Theorem}[section]

\newtheorem{teo}[theorem]{Theorem}
\newtheorem{obs}[theorem]{Observation}
\newtheorem{lema}[theorem]{Lemma}

\newtheorem{ej}[theorem]{Example}
\newtheorem{conjecture}[theorem]{Conjecture}
\newtheorem{cor}[theorem]{Corollary}

\newtheorem{definition}[theorem]{Definition}

\newtheorem{lemma}[theorem]{Lemma}

\newtheorem{prop}[theorem]{Proposition}

\newenvironment{proof}[1][\noindent Proof]{\textbf{#1.} }{\ \rule{0.5em}{0.5em}}
\newcommand{\N}{\mathbb{N}}
\newcommand{\R}{\mathbb{R}}
\newcommand{\M}{\mathcal{M}}

\DeclareMathOperator{\Irr}{Irr}

\title{Some families of digraphs determined by the complementarity spectrum}

\author{Diego Bravo}
\ead{dbravo27@gmail.com}
\address{PEDECIBA, Uruguay.}

\author{Florencia Cubr\'{\i}a\footnote{Corresponding author: fcubria@fing.edu.uy, Herrera y Reissig 565, Montevideo, Uruguay.}}
\ead{fcubria@fing.edu.uy}
\author{Marcelo Fiori}
\ead{mfiori@fing.edu.uy}
\author{Gustavo Rama}
\ead{grama@fing.edu.uy}
\address{Instituto de Matemática y Estadística ``Rafael Laguardia'', Facultad de Ingenier\'{i}a,\\ Universidad de la Republica, Uruguay.}

\begin{document}

\begin{abstract}
We examine the capacity of the complementarity spectrum to distinguish non-isomorphic digraphs. We focus on the seven families with exactly three complementarity eigenvalues \cite{flor2}. Our findings reveal that in some, but not all families, any two non-isomorphic members have different complementarity spectrum.\\
Complementarity eigenvalues outperform traditional eigenvalues in the task of identifying graphs. Indeed, the question of whether graphs are uniquely determined by their complementarity spectrum remains unresolved, highlighting the significance of this tool in graph theory.
Moreover, since the complementarity spectrum of a digraph was characterized in \cite{flor} as the set of spectral radii of the induced strongly connected subdigraphs, the results of this study provides useful structural information for important families of digraphs.

\end{abstract}

\begin{keyword} Complementarity spectrum \sep complementary spectrum \sep%
    digraph characterization\\ Mathematics Subject Classification: 05C50, 05C20
\end{keyword}

\maketitle

\section{Introduction}
The discriminative power of the spectral properties of adjacency matrices, along with other matrices related to graphs and digraphs, has been an active field of research for decades. Recently, the complementarity eigenvalues have shown a considerable advantage in the unique identification of graphs, contrasting with traditional eigenvalues. This advantage is significant, as Collatz and Sinogowitz demonstrated the limitations of traditional eigenvalues in uniquely determining graphs through the adjacency matrix's eigenvalues \cite{Collatz1957}. This limitation also extends 
to directed graphs (digraphs), where Harary provided the first instance of cospectral non-isomorphic digraphs \cite{Harary1971}. Yet, the question of whether undirected graphs can be uniquely identified by their complementarity spectrum remains unresolved, underscoring its critical role in graph theory.\\

The concept of the complementarity spectrum for digraphs was introduced in \cite{flor}, where also it was observed the phenomenon of non-isomorphic digraphs sharing identical spectra. Despite this limitation, further investigations into this novel concept, particularly the exploration of which digraph families can be distinguished based on their complementarity spectrum, remain imperative. This mirrors an earlier study for undirected graphs \cite{Pinheiro2020}.\\

In the context of digraphs, the number of complementarity eigenvalues is indicative of the underlying structure: a single eigenvalue suggests an acyclic graph, whereas two eigenvalues point to a structure comprising cycles or isolated vertices \cite{flor}. Strongly connected digraphs characterized by exactly three complementarity eigenvalues have been classified into seven distinct families \cite{flor2}, illustrating the close relationship between a digraph's structure and its complementarity spectrum.\\

This research investigates the capability to distinguish among digraphs across seven specified families based on their complementarity spectra. It reveals that while the complementarity spectra can uniquely identify certain families, others remain indistinguishable.\\

Among the seven families studied, it particularly advances and completes the preliminary findings of \cite{Lin2012} for $\theta$-digraphs, a bicyclic family among them. It establishes that these digraphs, along with others, can be distinguished by their spectral radii.\\

Moreover, this work enhances the understanding of digraph structures within these families by analyzing their characteristic polynomials and spectral radii. This analysis is underpinned by characterizing the complementarity spectrum through the spectral radii of induced strongly connected sub-digraphs \cite{flor}.\\

The broader implications of complementarity or Pareto spectra over symmetric matrices, extend to linear programming, cone-constrained dynamical systems and mathematical modeling in general \cite{Baillon2020}. Expanding the study of complementarity spectrum to non-symmetric matrices opens new avenues for applying complementarity eigenvalues.\\ 

The cardinality of the complementarity spectrum, not tied to the (di)graph's order, introduces unique challenges. Initial research has thoroughly investigated digraphs with up to three complementarity eigenvalues \cite{flor2}, identifying seven families with specific characteristics. This study leverages these foundational classifications to determine the isomorphism of digraphs within these families based on their shared complementarity spectrum, employing both classical and algebraic analysis of characteristic polynomials and spectral radii.\\

Following this introduction, the paper is systematically organized into sections detailing preliminary concepts, analyses of families derived from cycle coalescence and $\theta$-digraphs, and concludes with discussions on the uniqueness of the complementarity spectrum among these families, along with conjectures for further investigation.\\

\section{Preliminaries and notation}
\label{sec:preliminares}

Let $D=(V,E)$ be a finite simple digraph  with vertices labeled as $1, \hdots, n$. The adjacency matrix of $D$ is defined as $A(D)=(a_{ij})$ where
\[a_{ij}=
\begin{cases}
1 \quad \text{if }(i,j)\in E,\\
0 \quad \text{otherwise.}
\end{cases}
\]

The \textit{spectrum} of $D$ is the multi-set of roots of the characteristic polynomial of $A(D)$, counted with their multiplicities, denoted by $Sp(D)$. If $D$ is a digraph with strongly connected components $D_1, \hdots,D_k$, then $Sp(D)=\bigsqcup_{i=1}^k Sp(D_i)$ where $\bigsqcup$ denotes the union of multi-sets.\\

Throughout this paper, $\rho(\cdot)$ denotes the spectral radius, i.e., the largest module of the eigenvalues of a matrix. For non negative irreducible matrices, the spectral radius coincides with the largest real eigenvalue, by virtue of the Perron-Frobenius theorem. Additionally, in this case, the spectral radius is simple and may be associated with an eigenvector $x>\textbf{o}$, where $\textbf{o}$ denotes the null vector in $\R^n$ and $\geq$ {(or $>$)} means that the inequality holds for every coordinate.\\

This real positive value $\rho(A(D))$ is called the \emph{spectral radius of the digraph $D$} and is denoted by $\rho(D)$. As we will see, the spectral radius of the digraph plays a fundamental role in the results obtained in this paper.\\

A digraph $H=(V',E')$ is a sub-digraph of $D$ (denoted $H\leq D$) if $V'\subseteq V$ and $E'\subseteq E$. We say that $H$ is an induced sub-digraph if $E'=E \cap (V'\times V')$ and a proper sub-digraph if $E'\neq E$.\\

The following Lemma is a consequence of the Perron-Frobenius theorem, but we state it here for easy reference.

\begin{lemma}\label{lem:sub} Let $H$ be a proper sub-digraph of a strongly connected digraph $D$. Then $\rho(H) < \rho(D)$.
\end{lemma}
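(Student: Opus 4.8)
The plan is to recast the statement as a comparison between nonnegative matrices and then to exploit Perron--Frobenius theory, crucially using that $A(D)$ is irreducible. First I would reduce to a matrix inequality: setting $n=|V(D)|$ and indexing everything by $V(D)$, let $B$ be the $n\times n$ $0/1$ matrix supported on the arc set $E'$ of $H$; padding $A(H)$ with zero rows and columns for the vertices of $V\setminus V'$ only multiplies the characteristic polynomial by a power of $\lambda$, so $\rho(B)=\rho(H)$. Since $E'\subseteq E$ one has $0\le B\le A(D)$ entrywise, and since $H$ is \emph{proper} one gets $B\neq A(D)$ --- if $V'=V$ because $E'\subsetneq E$, and if $V'\subsetneq V$ because strong connectivity of $D$ (with $n\ge2$, the case $n\le1$ being vacuous for simple digraphs) makes every row of $A(D)$ nonzero while the rows of $B$ indexed by $V\setminus V'$ vanish. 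As $A(D)$ is irreducible (this is equivalent to $D$ being strongly connected), it then suffices to prove the purely matrix-theoretic claim: if $A\ge0$ is irreducible and $0\le B\le A$ with $B\neq A$, then $\rho(B)<\rho(A)$.

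To prove that claim I would argue as follows. Write $r=\rho(A)$. Perron--Frobenius gives a strictly positive left eigenvector $y>\textbf{o}$ with $y^{\top}A=r\,y^{\top}$, and, applied to the (possibly reducible) nonnegative matrix $B$, a nonnegative eigenvector $v\ge\textbf{o}$, $v\neq\textbf{o}$, with $Bv=\rho(B)\,v$. Then
\[
\rho(B)\,y^{\top}v \;=\; y^{\top}Bv \;\le\; y^{\top}Av \;=\; r\,y^{\top}v ,
\]
and dividing by $y^{\top}v>0$ yields $\rho(B)\le r$. For strictness, suppose $\rho(B)=r$; then equality holds throughout, so $y^{\top}(A-B)v=0$, and since $y>\textbf{o}$ and $(A-B)v\ge\textbf{o}$ this forces $(A-B)v=\textbf{o}$, i.e.\ $Av=Bv=rv$. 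At this point I would invoke irreducibility once more: any nonnegative eigenvector of an irreducible nonnegative matrix for its spectral radius is strictly positive (e.g.\ $(I+A)^{n-1}v=(1+r)^{n-1}v>\textbf{o}$), so $v>\textbf{o}$; but $(A-B)v=\textbf{o}$ with $A-B\ge0$ and $v>\textbf{o}$ then forces $A-B=0$, contradicting $B\neq A$. Hence $\rho(B)<r=\rho(D)$, which is the claim.

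I expect the only real obstacle to be the passage from $\le$ to $<$: the padded adjacency matrix $B$ of $H$ is in general reducible, so one cannot simply quote strict monotonicity of the Perron root for $B$ itself; the device that works is to test the nonnegative Perron vector of $B$ against the strictly positive \emph{left} Perron vector of the irreducible matrix $A$, as above. Everything else --- the reduction to matrices, the weak inequality, and the positivity facts for irreducible nonnegative matrices --- is standard Perron--Frobenius bookkeeping.
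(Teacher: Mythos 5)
Your proof is correct, and it follows exactly the route the paper indicates: the paper states this lemma without proof, citing it as ``a consequence of the Perron--Frobenius theorem,'' and your argument (padding $A(H)$ to a nonnegative matrix $B\le A(D)$ with $B\ne A(D)$, then testing a nonnegative Perron vector of $B$ against the strictly positive left Perron vector of the irreducible matrix $A(D)$ to get strictness) is a complete and standard instantiation of that claim. No gaps.
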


We use the term \textit{cycle} to refer to a directed cycle in a digraph. As noted in \cite{flor}, the spectral radius of the cycle digraph $\vec{C}_n$, is $\rho(\vec{C}_n)=1.$\\

The Eigenvalue Complementarity Problem (EiCP) introduced in \cite{Seeger99}, has found many applications in different fields of science, engineering and economics \cite{Adly2015,Facchinei2007,Pinto2008,Pinto2004}.\\

Given a matrix $A \in \M_n(\mathbb{R})$, the set of \textit{complementarity eigenvalues} is defined as those $\lambda \in \R$ such that there exists a vector $x \in \R^n$, not null and non negative, verifying
 $Ax \geq \lambda x$, and
\begin{equation*}
\label{complement}
\langle  x, Ax-\lambda x \rangle =0.
\end{equation*}

Writing $w=Ax-\lambda x\geq \textbf{o}$, the previous condition results in\[x^tw=0 \]
which means
\[x_i=0 \quad \text{or} \quad w_i=0 \text{ for all } i=1 \hdots, n.\]

This last condition is called \textit{complementarity condition}.\\

The set of all complementarity eigenvalues of a matrix $A$ is called the \textit{complementarity spectrum} of $A$, and it is denoted $\Pi(A)$. Unlike the regular spectrum of a matrix, the complementarity spectrum is a set (not a multiset), and the number of complementarity eigenvalues is not determined by the size of the matrix.\\

It is known that if $\lambda$ is a complementarity eigenvalue of $A$, then it is a complementarity eigenvalue of $PAP^t$ as well, for every permutation matrix $P$ \cite{Pinto2008}.\\

This fact allows to define the complementarity spectrum of a digraph, since the complementarity spectrum is invariant in the family of adjacency matrices associated to the digraph.\\

The following theorem \cite{flor} extends an existing result for graphs and leads to a simple useful characterization of the complementarity eigenvalues in terms of the structural properties of a digraph.

\begin{teo}
\label{compl_spect_induced_subdigraphs} Let $D$ be a digraph and $\Pi(D)$ its complementarity spectrum. Then
\[\Pi(D)=\{\rho(H): {H \text{ induced strongly connected subdigraph of }D}\}.\]
\end{teo}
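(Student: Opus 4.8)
The plan is to prove the two inclusions separately, working directly from the definition of complementarity eigenvalue. For the inclusion $\{\rho(H): H \text{ induced strongly connected subdigraph of }D\} \subseteq \Pi(D)$, I would take an induced strongly connected subdigraph $H$ on a vertex subset $V' \subseteq V$. Since $A(H)$ is nonnegative and irreducible, the Perron--Frobenius theorem gives a positive eigenvector $y > \textbf{o}$ on $V'$ with $A(H)y = \rho(H)y$. Extend $y$ to a vector $x \in \R^n$ by setting $x_i = y_i$ for $i \in V'$ and $x_i = 0$ otherwise; then $x \geq \textbf{o}$ and $x \neq \textbf{o}$. The key computation is that for $i \in V'$, the $i$-th coordinate of $A(D)x$ equals $\sum_{j \in V'} a_{ij} x_j$, which is exactly the $i$-th coordinate of $A(H)y$ because $H$ is \emph{induced} (all edges of $D$ within $V'$ are present in $H$); hence $(A(D)x)_i = \rho(H) x_i = \lambda x_i$ for $i \in V'$, so $(A(D)x - \lambda x)_i = 0$ there. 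For $i \notin V'$, $x_i = 0$, so $(A(D)x - \lambda x)_i = (A(D)x)_i = \sum_j a_{ij} x_j \geq 0$ since $A(D)$ is nonnegative and $x \geq \textbf{o}$. Thus $w = A(D)x - \lambda x \geq \textbf{o}$, and the complementarity condition $x_i = 0$ or $w_i = 0$ holds coordinatewise (when $i \in V'$ we have $w_i = 0$; when $i \notin V'$ we have $x_i = 0$). So $\rho(H) = \lambda \in \Pi(D)$.

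For the reverse inclusion $\Pi(D) \subseteq \{\rho(H): H \ldots\}$, suppose $\lambda \in \Pi(D)$ with witness $x \geq \textbf{o}$, $x \neq \textbf{o}$, and $w = A(D)x - \lambda x \geq \textbf{o}$ satisfying complementarity. Let $V' = \{i : x_i > 0\}$, which is nonempty, and let $H$ be the subdigraph of $D$ induced on $V'$. For $i \in V'$ the complementarity condition forces $w_i = 0$, i.e. $(A(D)x)_i = \lambda x_i$. Now I claim the restriction $x' = x|_{V'} > \textbf{o}$ satisfies $A(H)x' = \lambda x'$: indeed $(A(H)x')_i = \sum_{j \in V'} a_{ij} x_j = \sum_{j \in V} a_{ij} x_j = (A(D)x)_i = \lambda x_i$, where the second equality holds because $x_j = 0$ for $j \notin V'$. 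So $A(H)$ has a strictly positive eigenvector with eigenvalue $\lambda$, and since $\lambda \geq 0$ (it equals $\rho$ restricted appropriately — more precisely, $\lambda$ must be nonnegative because $A(H)x' = \lambda x'$ with $A(H)$ nonnegative and $x' > \textbf{o}$), $\lambda = \rho(A(H))$ by the converse part of Perron--Frobenius (a nonnegative matrix having a positive eigenvector has that eigenvalue equal to its spectral radius).

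The remaining and slightly delicate point is that $H$ need not be strongly connected a priori; I would handle this by passing to strongly connected components. Decompose $H$ into its strongly connected components $H_1, \dots, H_m$ with vertex sets $V_1, \dots, V_m$; these induce a partial order via the condensation digraph. Pick a component $H_r$ that is a \emph{source} in the condensation (no edges entering it from other components). Then for $i \in V_r$, $(A(H)x')_i$ only involves $x_j$ with $j \in V_r$, so $A(H_r)(x'|_{V_r}) = \lambda (x'|_{V_r})$ with $x'|_{V_r} > \textbf{o}$; as before this yields $\lambda = \rho(H_r)$, and $H_r$ is an induced strongly connected subdigraph of $D$. (Alternatively, one can argue via Lemma~\ref{lem:sub} that $\rho(H_r) \leq \lambda$ for every component and $\rho(H_{r_0}) = \lambda$ for the component achieving the maximum, but the source-component argument is cleaner.) The main obstacle, then, is exactly this reduction from the induced subdigraph on the support of $x$ to a genuinely strongly connected one; everything else is a direct unwinding of definitions together with the two halves of the Perron--Frobenius theorem.
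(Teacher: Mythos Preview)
The paper does not supply its own proof of this theorem: it is quoted from \cite{flor} and used as background, so there is nothing to compare against. Your argument is the standard one and is essentially correct, with one small slip worth flagging.

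In the reverse inclusion you write: ``Pick a component $H_r$ that is a \emph{source} in the condensation (no edges entering it from other components). Then for $i \in V_r$, $(A(H)x')_i$ only involves $x_j$ with $j \in V_r$.'' With the paper's convention $a_{ij}=1$ iff $(i,j)\in E$, the quantity $(A(H)x')_i=\sum_j a_{ij}x'_j$ ranges over the \emph{out}-neighbours of $i$; for this sum to stay inside $V_r$ you need that no arc leaves $V_r$, i.e.\ $H_r$ must be a \emph{sink} in the condensation, not a source. Swapping the word fixes the argument. Your parenthetical alternative also works cleanly and sidesteps the issue: from $A(H)x'=\lambda x'$ with $x'>\textbf{o}$ one gets $\lambda=\rho(H)$, and since the spectrum of $H$ is the multiset union of the spectra of its strongly connected components, $\rho(H)=\max_r \rho(H_r)$; take any component attaining the maximum. (Note, however, that Lemma~\ref{lem:sub} as stated requires the ambient digraph to be strongly connected, so it is not quite the right citation for this step; the block-triangular structure of $A(H)$ is what you need.)
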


Particularly, the complementarity spectrum of a digraph is then a set of non negative real numbers containing $0$.\\

The following result \cite{flor} allows to focus on strongly connected digraphs.
\begin{prop}\label{prop:strong}
Let $D$ be a digraph and $D_1, \hdots, D_k$ the digraph generated by the strongly connected components of $D$. Then,
\[
\Pi(D)=\cup_{i=1}^k \Pi(D_i).
\]
\end{prop}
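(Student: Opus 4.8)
The plan is to reduce the statement entirely to Theorem \ref{compl_spect_induced_subdigraphs}, which already describes $\Pi(D)$ as the set of spectral radii of the induced strongly connected subdigraphs of $D$. Once that characterization is in hand, the proposition becomes a purely combinatorial observation about which induced strongly connected subdigraphs a digraph admits, given its decomposition into strongly connected components.

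First I would show that the induced strongly connected subdigraphs of $D$ are precisely those that are induced strongly connected subdigraphs of one of the $D_i$. In one direction: if $H$ is an induced strongly connected subdigraph of $D$, then any two of its vertices are mutually reachable within $H$, hence within $D$, so they lie in the same strongly connected component; thus $V(H)\subseteq V(D_i)$ for some $i$. In the other direction: if $W\subseteq V(D_i)$, then the subdigraph of $D$ induced by $W$ and the subdigraph of $D_i$ induced by $W$ coincide, since $D_i$ is by definition the subdigraph of $D$ induced on the vertex set of the $i$-th component, so $D$ and $D_i$ have exactly the same arcs among vertices of $V(D_i)$. Combining the two directions gives the desired identification of the families of induced strongly connected subdigraphs.

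Next I would apply Theorem \ref{compl_spect_induced_subdigraphs} again, now to each $D_i$ separately, to obtain $\Pi(D_i)=\{\rho(H): H \text{ induced strongly connected subdigraph of }D_i\}$. Taking the union over $i$ and using the identification from the previous paragraph yields
\[
\bigcup_{i=1}^{k}\Pi(D_i)=\{\rho(H): H \text{ induced strongly connected subdigraph of }D\}=\Pi(D),
\]
which is exactly the claim; since $\Pi(\cdot)$ is a set, no multiplicity bookkeeping is needed, and both sides contain $0$ (realized by any single-vertex induced subdigraph), so the union is nonempty.

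I do not expect a genuine obstacle here: the argument is short and follows from Theorem \ref{compl_spect_induced_subdigraphs} almost formally. The only point demanding a little care is the second half of the first step — verifying that ``induced in $D_i$'' literally equals ``induced in $D$ with all vertices in the $i$-th component,'' which is where one uses that each $D_i$ is the subdigraph \emph{induced} on a strongly connected component rather than some abstract contraction or quotient of $D$.
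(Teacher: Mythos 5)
Your argument is correct and complete: the key observation that an induced strongly connected subdigraph of $D$ has all its vertices in a single strongly connected component (so the families of induced strongly connected subdigraphs of $D$ and of the $D_i$ coincide), combined with Theorem~\ref{compl_spect_induced_subdigraphs} applied to $D$ and to each $D_i$, does yield the claim. The paper itself states this proposition as a quoted result from the reference without reproving it, and your derivation is the natural one; the only point worth making explicit is that trivial (single-vertex) components are covered, since every vertex lies in some component and an isolated vertex contributes $\rho=0$ to both sides.
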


Let $\#\Pi(D)$ denote the cardinality of $\Pi(D)$. The complementarity spectrum encodes structural properties of the digraph as it can been seen in the next result \cite{flor}.  

\begin{theorem}\label{tm:char} Let $D$ be a digraph and $\Pi(D)$ its complementarity spectrum. The three conditions listed under (1) are mutually equivalent, as are the three conditions under (2).
\begin{enumerate}
\item
\begin{enumerate}
    \item[(a)] $\Pi(D)=\{0\}$,
    \item[(b)] $\#\Pi(D)=1$,
    \item[(c)] $D$ is acyclic.
    \end{enumerate}
\item
    \begin{enumerate}
    \item[(a)] $\Pi(D)=\{0,1\}$,
    \item[(b)] $\#\Pi(D)=2$,
    \item[(c)] $D$ is not acyclic and its strongly connected components are either cycles or isolated vertices.
    \end{enumerate}
\end{enumerate}
\end{theorem}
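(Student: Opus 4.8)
The plan is to reduce everything to Theorem~\ref{compl_spect_induced_subdigraphs}, which identifies $\Pi(D)$ with the set of spectral radii of the induced strongly connected subdigraphs of $D$, together with one structural lemma about strongly connected digraphs. First I would record the elementary fact that $0\in\Pi(D)$ for every $D$: each single vertex is an induced strongly connected subdigraph (a simple digraph has no loops) with spectral radius $0$. This already gives the easy implications $(b)\Rightarrow(a)$ in both parts: if $\#\Pi(D)=1$ then $\Pi(D)=\{0\}$, and if $\#\Pi(D)=2$ then $\Pi(D)=\{0,\mu\}$ for a single $\mu>0$; in the second case it still remains to pin down $\mu=1$, which will follow from the lemma below. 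The implications $(a)\Rightarrow(b)$ are immediate.

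The key step is the following classification, proved via Lemma~\ref{lem:sub}: a strongly connected digraph $D'$ satisfies $\rho(D')=0$ iff $D'$ is a single vertex, $\rho(D')=1$ iff $D'$ is a cycle $\vec{C}_n$, and $\rho(D')>1$ otherwise. Indeed, a strongly connected digraph on at least two vertices contains a cycle $C$ (concatenate a path from $u$ to $v$ with one from $v$ to $u$), so $\rho(D')\geq\rho(\vec{C}_n)=1$, with $\rho(D')=0$ only in the trivial one-vertex case. Moreover, if $D'$ is not itself a cycle, then every cycle $C\leq D'$ is a \emph{proper} subdigraph: its arc set is strictly smaller than that of $D'$, since otherwise, strong connectivity forbidding isolated vertices, we would have $D'=C$. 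Hence Lemma~\ref{lem:sub} gives $\rho(D')>\rho(C)=1$. I would also record the companion fact that whenever $D$ is not acyclic one has $1\in\Pi(D)$: take a shortest cycle of $D$; any chord on its vertex set would produce a strictly shorter cycle, so the subdigraph induced on those vertices is exactly $\vec{C}_m$, an induced strongly connected subdigraph of spectral radius $1$.

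With these in hand the equivalences fall out. For $(1)$: if $D$ is acyclic its only strongly connected induced subdigraphs are single vertices, so $\Pi(D)=\{0\}$; conversely if $D$ is not acyclic then $1\in\Pi(D)$, so $\Pi(D)\neq\{0\}$. This gives $(a)\Leftrightarrow(c)$, and combined with the first paragraph all three conditions of $(1)$ are equivalent. For $(2)$: by $(1)$, each of $(a)$ and $(b)$ forces $D$ to be non-acyclic, hence $\{0,1\}\subseteq\Pi(D)$; if moreover some strongly connected component $D_i$ were neither a cycle nor an isolated vertex, then, being an induced strongly connected subdigraph of $D$, it would contribute $\rho(D_i)>1$ to $\Pi(D)$, contradicting $(a)$ and $(b)$. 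Conversely, if $D$ is non-acyclic with every strongly connected component a cycle or an isolated vertex, then by Proposition~\ref{prop:strong} $\Pi(D)=\bigcup_i\Pi(D_i)$; a proper induced subdigraph of a cycle is a disjoint union of directed paths and so is not strongly connected, whence $\Pi(\vec{C}_n)=\{0,1\}$ while an isolated vertex contributes $\{0\}$, so $\Pi(D)=\{0,1\}$.

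I expect the only real content to be the structural lemma of the second paragraph, in particular the step that a strongly connected non-cycle has spectral radius strictly above $1$; the crucial observation making it routine is that every cycle inside such a digraph is a proper subdigraph, so Lemma~\ref{lem:sub} applies. Everything else is bookkeeping around Theorem~\ref{compl_spect_induced_subdigraphs} and Proposition~\ref{prop:strong}.
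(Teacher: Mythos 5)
Your proof is correct, and it is complete and self-contained. Note that the paper itself states Theorem~\ref{tm:char} without proof, quoting it from \cite{flor}; your derivation from Theorem~\ref{compl_spect_induced_subdigraphs}, Proposition~\ref{prop:strong} and Lemma~\ref{lem:sub} --- with the key observations that a shortest cycle is always induced (so $1\in\Pi(D)$ whenever $D$ has a cycle) and that any cycle inside a strongly connected non-cycle is a proper subdigraph (so such a digraph has spectral radius strictly greater than $1$) --- is exactly the argument one would expect behind the cited result.
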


The findings of Theorem~\ref{tm:char}, as detailed in \cite{flor}, conclusively identify digraphs based on their having either one or two complementarity eigenvalues.\\

Drawing from Theorem \ref{tm:char}, it is evident that any digraph containing a cycle as a sub-digraph invariably has 0 and 1 as its complementarity eigenvalues.\\

The set $\mathcal{SCD}_t$ denotes the collection of all strongly connected digraphs that have exactly $t$
complementarity eigenvalues. Notably, $\mathcal{SCD}_1$ comprises solely the digraph consisting of a single isolated vertex, while $\mathcal{SCD}_2$ encompasses all cycles, irrespective of their length.

\begin{theorem}
\label{caracterizacion}
{$\mathcal{SCD}_3$ is the set of strongly connected digraphs whose only proper induced strongly connected sub-digraphs are both isolated vertices and cycles.}
\end{theorem}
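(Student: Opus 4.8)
The plan is to prove the set equality in Theorem~\ref{caracterizacion} by showing the two inclusions, using Theorem~\ref{compl_spect_induced_subdigraphs} as the bridge between the complementarity spectrum and the induced strongly connected sub-digraphs. Write $D$ for a strongly connected digraph and recall from Theorem~\ref{compl_spect_induced_subdigraphs} that $\Pi(D)$ is exactly the set of spectral radii $\rho(H)$ as $H$ ranges over the induced strongly connected sub-digraphs of $D$; among those, $D$ itself contributes $\rho(D)$ and the isolated vertices contribute $0$. So in all cases $\{0,\rho(D)\}\subseteq\Pi(D)$, and by Lemma~\ref{lem:sub} every proper induced strongly connected sub-digraph $H$ satisfies $\rho(H)<\rho(D)$, so $\rho(D)$ is the maximum of $\Pi(D)$ and is attained only by $D$.

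First I would take $D\in\mathcal{SCD}_3$, i.e. $\#\Pi(D)=3$, and show its only proper induced strongly connected sub-digraphs are isolated vertices and cycles. Since $D$ is strongly connected it is not acyclic, so by Theorem~\ref{tm:char}(2) (the cycle remark after it) we have $0,1\in\Pi(D)$; together with $\rho(D)$ this forces $\Pi(D)=\{0,1,\rho(D)\}$ with $\rho(D)>1$ (if $\rho(D)=1$ then $\#\Pi(D)=2$). Now let $H$ be any proper induced strongly connected sub-digraph of $D$. Then $H$ is nontrivial or an isolated vertex; if $H$ is not an isolated vertex it contains a cycle, hence $\rho(H)\ge 1$, while $\rho(H)<\rho(D)$ by Lemma~\ref{lem:sub}, so $\rho(H)\in\{1\}$ forced by $\Pi(D)=\{0,1,\rho(D)\}$ and $\rho(H)\in\Pi(D)$; thus $\rho(H)=1$. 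A strongly connected digraph with spectral radius $1$ must be a single cycle: it has at least one cycle $\vec C$, and if it had any arc not on a single cycle it would properly contain $\vec C$ as an induced (after taking the vertex set of $\vec C$... — here one needs the standard fact that a strongly connected digraph of spectral radius $1$ is a directed cycle, which follows since $\rho\ge$ average out-degree forces out-degree $1$ everywhere, and in-degree $1$ symmetrically, so $H=\vec C_m$). Hence every proper induced strongly connected sub-digraph of $D$ is a cycle or an isolated vertex, as claimed.

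Conversely, suppose $D$ is strongly connected and its only proper induced strongly connected sub-digraphs are isolated vertices and cycles; I must show $\#\Pi(D)=3$. Since $D$ is strongly connected it contains a cycle, so it is not a single isolated vertex, and as a strongly connected digraph on $\ge 2$ vertices it is not acyclic; moreover $D$ is not itself a cycle (otherwise it would be in $\mathcal{SCD}_2$) — wait, the statement should be read as classifying $\mathcal{SCD}_3$, so I should additionally observe $D\neq$ single vertex and $D$ is not a cycle, which must be part of the hypothesis or follows because otherwise $D$ has no proper induced strongly connected sub-digraph that is a cycle; I would make this precise by noting $D\in\mathcal{SCD}_3$ implies $\#\Pi(D)=3>2$ so $D$ is neither in $\mathcal{SCD}_1$ nor $\mathcal{SCD}_2$. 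Granting that $D$ is strongly connected, not a cycle and not a point: it contains a cycle as a proper sub-digraph, so $1\in\Pi(D)$ and $0\in\Pi(D)$, and $\rho(D)\in\Pi(D)$ with $\rho(D)>1$ by Lemma~\ref{lem:sub} applied to that cycle. Any other element of $\Pi(D)$ is $\rho(H)$ for a proper induced strongly connected $H$ that is a cycle or isolated vertex, hence $\rho(H)\in\{0,1\}$. Therefore $\Pi(D)=\{0,1,\rho(D)\}$ has exactly three elements, so $D\in\mathcal{SCD}_3$.

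The main obstacle I expect is the elementary but slightly fiddly lemma that a strongly connected digraph with spectral radius equal to $1$ is a directed cycle (equivalently, every vertex has out-degree and in-degree exactly $1$); this is where one must argue carefully, e.g. via the Perron vector inequality $\rho(H)\,x_i=\sum_{j}a_{ij}x_j$ combined with strong connectivity to rule out any vertex of out-degree $\ge 2$, or by invoking Lemma~\ref{lem:sub} to say that if $H$ strictly contains a cycle $\vec C_m$ on a subset of its vertices that induces exactly $\vec C_m$ then $\rho(H)>1$. The rest of the argument is bookkeeping with Theorem~\ref{compl_spect_induced_subdigraphs} and Lemma~\ref{lem:sub}.
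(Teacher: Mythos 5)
The paper itself states Theorem~\ref{caracterizacion} without proof (it is imported from \cite{flor2}), so there is no internal argument to compare against; judged on its own, your proposal is correct and is the natural argument: Theorem~\ref{compl_spect_induced_subdigraphs} plus Lemma~\ref{lem:sub} reduce everything to identifying which strongly connected digraphs have spectral radius $0$ (isolated vertices) or $1$ (cycles), and your second way of dispatching the sub-lemma you flag as ``fiddly'' is exactly right: a strongly connected digraph on at least two vertices that is not itself a cycle contains a cycle as a \emph{proper} sub-digraph, so Lemma~\ref{lem:sub} gives $\rho>1$, no degree-counting needed. Two points to tighten. First, in the converse direction you briefly argue in a circle (``$D$ is not a cycle, otherwise it would be in $\mathcal{SCD}_2$'') before landing on the correct fix: the statement's ``both isolated vertices and cycles'' should be read as requiring that cycles actually occur among the proper induced strongly connected sub-digraphs, which excludes single vertices and cycles on the hypothesis side without any appeal to the conclusion; state that reading up front and the circularity disappears. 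Second, when you conclude $\Pi(D)=\{0,1,\rho(D)\}$ with $\rho(D)>1$ in the forward direction, the cleanest justification is that every element of $\Pi(D)$ is the spectral radius of an induced strongly connected sub-digraph and hence is either $0$ or at least $1$, so no value in the open interval $(0,1)$ can supply the third eigenvalue; also note that the shortest cycle of $D$ is automatically induced, which is what lets you invoke Theorem~\ref{compl_spect_induced_subdigraphs} to get $1\in\Pi(D)$. With those clarifications both inclusions go through.
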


In this paper, our focus is limited to digraphs that have a maximum of three complementarity eigenvalues. While Theorem~\ref{tm:char} delineates a complete account of digraphs with one or two elements in their complementarity spectrum, Theorem~\ref{caracterizacion} only provides a structural characterization for these digraphs.\\

To delve deeper into the classification of digraphs determined by their complementarity spectrum, we employ a categorization into seven families within \(\mathcal{SCD}_3\), as introduced in \cite{flor2}.\\

This classification begins with two foundational digraph families: the \(\infty\)-digraph and the \(\theta\)-digraph. From these, five additional digraph types are derived.\\

In the figures throughout this manuscript, digraphs are represented using the following convention: a single arrow between two vertices indicates one arc joining them, while a double arrow signifies that there may be other vertices in the path joining them. For clarity, we will label the vertices in \(\vec{C}_r\) as \(1, 2, \hdots, r\), and those in \(\vec{C}_s\) as \(1', 2', \hdots, s'\), with \(1\) identified with \(1'\).\\

\textbf{\(\infty\), Type 1 and Type 2 digraphs}\\

The \(\infty\)-digraph, denoted as \(\infty(r,s)\), is defined by the coalescence of two cycles, \(\vec{C}_r\) and \(\vec{C}_s\).\\


Figure \ref{fig:infinito} shows one example of a digraph in this family and a schematic representation.

\begin{figure}[h!]
\centering
\includegraphics[scale=0.18]{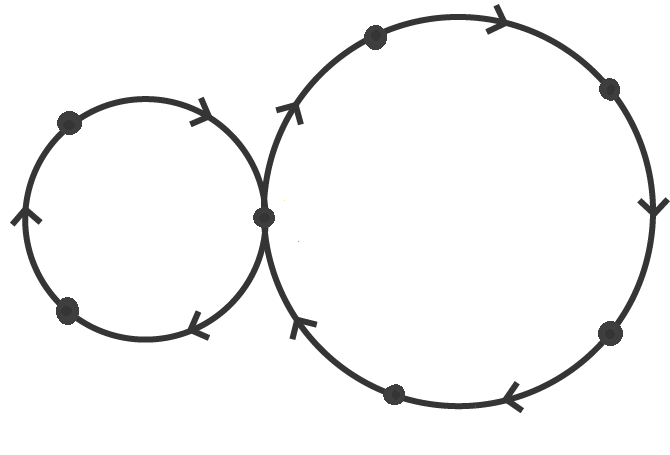}
\hspace{0.5cm}
\includegraphics[scale=0.18]{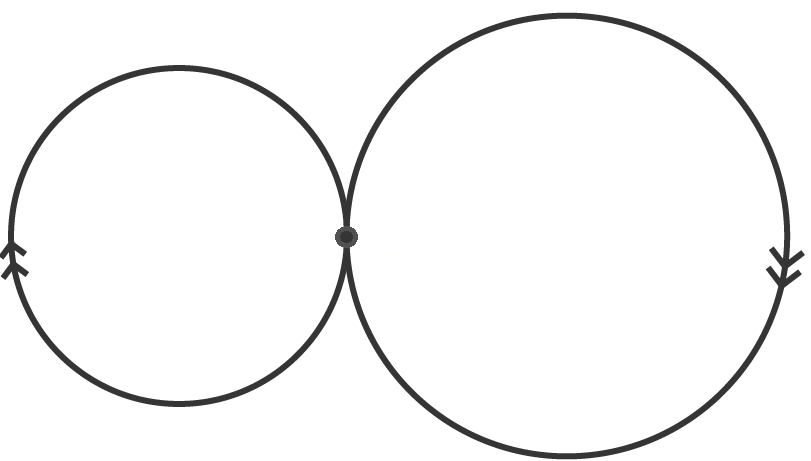}
\caption{Digraph $\infty(3,5)$ and a schematic representation of the same digraph.}
\label{fig:infinito}
\end{figure}

It is easy to see that the characteristic polynomial is $$p_{\infty}(x)=x^n-x^{s-1}-x^{r-1}.$$

The addition of one or two specific arcs to the \(\infty\)-digraph results in two other families, denoted as Type 1 and Type 2 digraphs, respectively. A Type 1 digraph is created by adding an arc that connects vertex \(r\) to vertex \(2'\). By further adding an arc from vertex \(s'\) to \(2\), a Type 2 digraph is obtained (see Figure \ref{fig_tipo1and2}).




\begin{figure}[h!]
\centering
\includegraphics[scale=0.15]{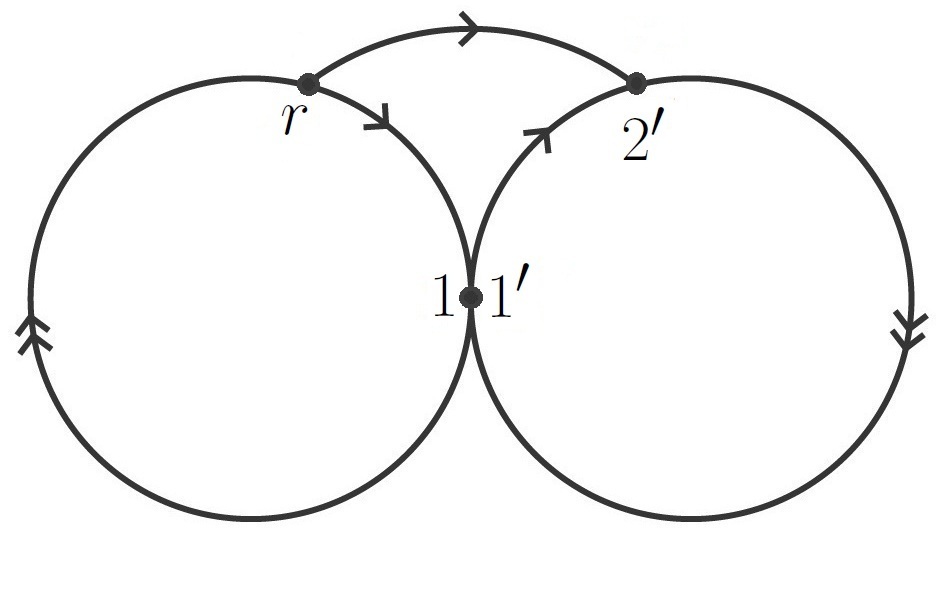}
\hspace{.5cm}
\includegraphics[scale=0.15]{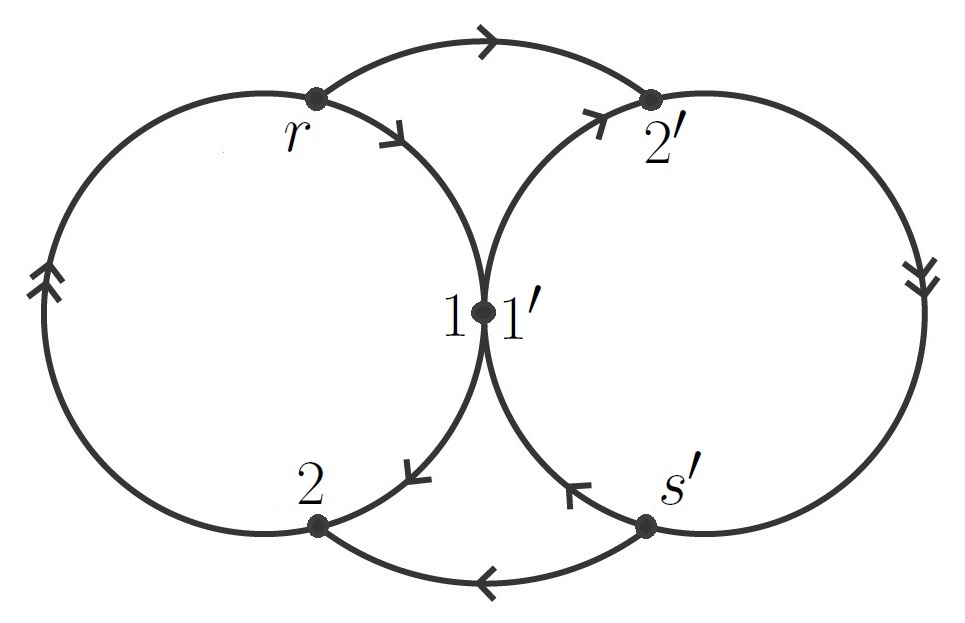}
\caption{Type 1 digraph (left) and Type 2 digraph (right)}.
\label{fig_tipo1and2}
\end{figure}




Let us denote by $D_1$ the Type 1 digraphs. Then, $D_1(r,s)=\infty(r,s)\cup\{e\}$, where $e=(r,2')$, as shown in Figure \ref{fig_tipo1and2} (left).\\

Similarly, let us denote by $D_2$ the Type 2 digraphs. Then, $D_2(r,s)=\infty(r,s)\cup\{e,\tilde{e}\}$, where $e=(r,2')$ and $\tilde{e}=(s',2)$, as shown in Figure \ref{fig_tipo1and2} (right). \\

It is readily apparent that, within these digraphs, the only strongly connected induced proper non-trivial sub-digraphs are cycles. Consequently, by virtue of Theorem~\ref{compl_spect_induced_subdigraphs}, the complementarity spectrum comprises three distinct elements: \(0\), \(1\), and the spectral radius of the digraph.
\\

By virtue of Sachs' Theorem we can see that the characteristic polynomials are 

\[p_{D_1}(x)=x^n-x^{n-r}-x^{n-s}-1=x^n-x^{s-1}-x^{r-1}-1,\]
\[p_{D_2}(x)=x^n-x^{n-r}-x^{n-s}-x-2=x^n-x^{s-1}-x^{r-1}-x-2.\]

Observe that among these families, the cycles in the \(\infty\) and Type 2 digraphs are interchangeable, indicating that \(\infty(r,s)\) is isomorphic to \(\infty(s,r)\), and a similar relationship holds for Type 2 digraphs. This property does not extend to Type 1 digraphs, necessitating a distinction into two sub-families as follows.\\

Given integers \(2 \leq r \leq s\), a Type $1a$ digraph is defined as \(D_{1a}(r,s) = \infty(r,s) \cup \{(r,2')\}\), and a Type $1b$ digraph as \(D_{1b}(r,s) = \infty(r,s) \cup \{(s',2)\}\).\\

In essence, Type $1a$ digraphs feature an added arc that connects the smaller cycle (in terms of vertex count) to the larger one, whereas in Type $1b$ digraphs, the added arc connects the larger cycle to the smaller one.\\

The digraph families introduced above each possess three complementarity eigenvalues: zero, one, and the spectral radius of the digraph itself. Applying the Perron-Frobenius theorem directly leads to a noteworthy observation.\\

\begin{obs}\label{rem:inftyFamily}
For the digraphs $\infty, D_{1}, D_{2}$ comprising \(n\) vertices, Lemma~\ref{lem:sub} implies that \(\rho(\infty) < \rho(D_{1}) < \rho(D_{2})\). This inequality sequence underscores the fact that these digraphs are distinct and not isomorphic to one another.
\end{obs}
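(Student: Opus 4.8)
The claim is really two assertions: the strict inequality chain $\rho(\infty) < \rho(D_1) < \rho(D_2)$ for digraphs on the same vertex set, and the consequence that they are pairwise non-isomorphic. The plan is to deduce the inequalities directly from Lemma~\ref{lem:sub}, and the non-isomorphism from the fact that the spectral radius is a digraph invariant.

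First I would fix $n$ and integers $r,s$ so that $\infty(r,s)$, $D_1(r,s)$, $D_2(r,s)$ all have exactly $n$ vertices; by construction they share the same vertex set, and we have the edge inclusions $\infty(r,s) \subsetneq D_1(r,s) \subsetneq D_2(r,s)$, since $D_1$ is obtained by adding the arc $e=(r,2')$ and $D_2$ by further adding $\tilde e = (s',2)$. The key point is that $D_1(r,s)$ and $D_2(r,s)$ are strongly connected (this is part of their being members of $\mathcal{SCD}_3$, as discussed just before Observation~\ref{rem:inftyFamily}). Hence $\infty(r,s)$ is a proper sub-digraph of the strongly connected digraph $D_1(r,s)$, and $D_1(r,s)$ is a proper sub-digraph of the strongly connected digraph $D_2(r,s)$. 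Applying Lemma~\ref{lem:sub} twice yields $\rho(\infty(r,s)) < \rho(D_1(r,s))$ and $\rho(D_1(r,s)) < \rho(D_2(r,s))$, which is the desired chain.

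For the non-isomorphism, I would invoke that isomorphic digraphs have the same adjacency matrix up to permutation conjugation, hence the same characteristic polynomial and in particular the same spectral radius. Since $\rho(\cdot)$ takes three strictly different values on $\infty(r,s)$, $D_1(r,s)$, $D_2(r,s)$, no two of them can be isomorphic. Strictly speaking one should also note that $\rho$ is an element of the complementarity spectrum $\Pi$ of each (namely its largest element, by Theorem~\ref{compl_spect_induced_subdigraphs}), so this is really the statement that these three families are separated by their complementarity spectra as well.

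I do not anticipate a genuine obstacle here: the only thing to be slightly careful about is making sure Lemma~\ref{lem:sub} is applied with the correct hypothesis, namely that the \emph{larger} digraph in each pair is strongly connected, which is exactly why the comparison $\rho(\infty) < \rho(D_1)$ is legitimate even though $\infty$ itself is only strongly connected as well — what matters is that $D_1$ is. The whole argument is a two-line application of a cited lemma plus the invariance of the spectral radius, so the write-up should be short.
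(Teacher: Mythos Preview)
Your proposal is correct and matches the paper's approach exactly: the observation is stated there as an immediate consequence of Lemma~\ref{lem:sub} (via the chain of proper inclusions $\infty \subsetneq D_1 \subsetneq D_2$ with each larger digraph strongly connected), with no further argument given. Your additional remark that non-isomorphism follows from the spectral radius being a digraph invariant just makes explicit what the paper leaves implicit.
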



It is a straightforward observation that in the \(\infty\), Type 1, and Type 2 digraph families, the only strongly connected induced sub-digraphs are cycles, besides the digraphs themselves and isolated vertices. Thus,

\[\Pi(\infty)=\{0,1,\rho(\infty)\}\]
\[\Pi(D_1)=\{0,1,\rho(D_1)\}\]
 \[\Pi(D_2)=\{0,1,\rho(D_2)\}\]\\



\noindent \textbf{$\theta$-digraph}\\



A \(\theta\)-digraph, as described in \cite{Lin2012,flor}, consists of three directed paths \(\vec{P}_{a+2}\), \(\vec{P}_{b+2}\), and \(\vec{P}_{c+2}\). These paths are arranged such that the initial vertex of \(\vec{P}_{a+2}\) and \(\vec{P}_{b+2}\) is the terminal vertex of \(\vec{P}_{c+2}\), and conversely, the initial vertex of \(\vec{P}_{c+2}\) is the terminal vertex of \(\vec{P}_{a+2}\) and \(\vec{P}_{b+2}\). This configuration is depicted in Figure \ref{fig:prohibido} and denoted by \(\theta(a, b, c)\), or simply \(\theta\).\\

\begin{figure}[h!]
\centering
\includegraphics[scale=0.15]{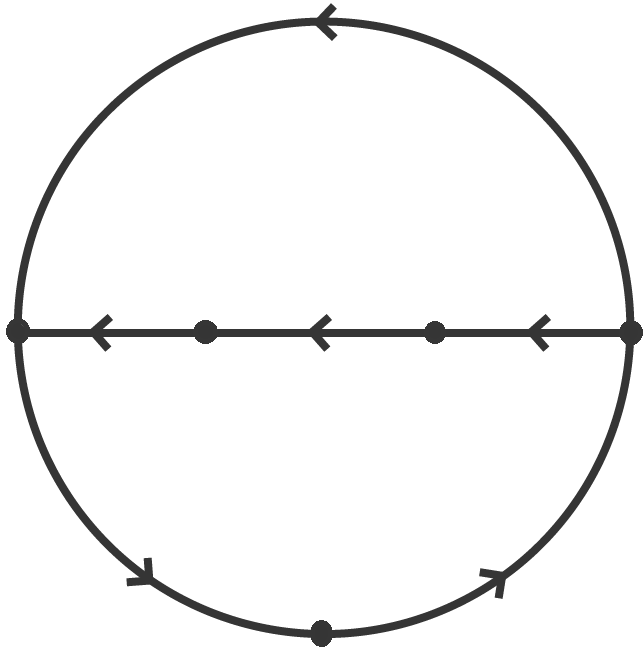}
\hspace{1cm}
\includegraphics[scale=0.15]{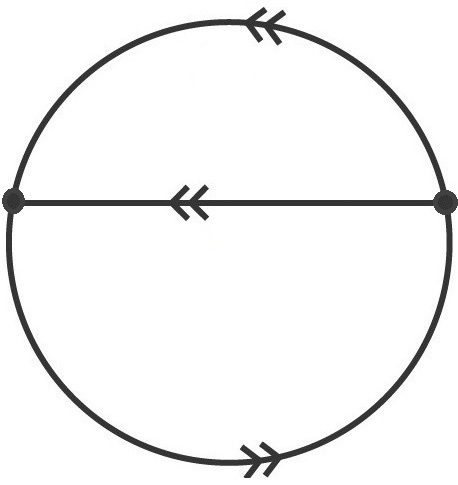}
\caption{Digraph $\theta(0,2,1)$ and a schematic representation of the same digraph.}
\label{fig:prohibido}
\end{figure}

Since ${\theta}(a,b,c)$ and ${\theta}(b,a,c)$ are isomorphic and we are not considering digraphs with multiple arcs, we can assume $a \leq b$ and $b > 0$,  without loss of generality.

Once again, the characteristic polynomial can be easily computed using Sachs' Theorem. \[p_{\theta}(x)=x^n-x^{n-r}-x^{n-s}= x^n-x^{b}-x^{a}.\]

The digraph $\theta(a,b,c)$ has $n=a+b+c+2$ vertices an the only strongly connected induced sub-digraphs are the cycles $\vec{C_r}$ and $\vec{C_s}$ (where $r=a+c+2$ and $s=b+c+2$), in addition to the digraph $\theta$ itself, and isolated vertices. Therefore,
\[\Pi({\theta})=\{0,1,\rho({\theta})\}.\]

Note that an equivalent construction of the $\theta$-digraph can be made by taking a cycle, and adding a simple path joining two different vertices.\\

We now describe three types of digraphs in $\mathcal{SCD}_3$
without $\infty$-subdigraphs.\\


\noindent \textbf{Type 3 digraph}\\

Let us denote by $D_3$ the Type 3 digraphs $D_3=\vec{C_n}\cup\{e,\tilde{e}\}$ where $e=(1,i)$ and $\tilde{e}=(i-1,j)$ with $2<i<j\leq n$, as shown in the Figure \ref{fig:tipo345} (left). We have that
\[p_{D_3}(x)=x^n-x^{i-2}-x^{j-i}-1.\]

The only strongly connected induced subdigraphs {of $D_3$} are the cycles $\vec{C}_{n-(i-2)}$, $\vec{C}_{n-(j-i)}$ besides the digraph $D_3$ and isolated vertices. Therefore,
\[\Pi(D_3)=\{0,1,\rho(D_3)\}.\]

\noindent \textbf{Type 4 digraph}\\

Let {$k$} be a positive integer larger than $1$. Let us denote by $D_4$ the Type 4 digraph $D_4=\vec{C_n}\cup\{e_1,e_2, \hdots, e_k\}$ where $e_i=(x_i,y_i)$ with $1<y_i<x_i<y_{i+1}<x_{i+1}\leq n$ for all $i=1, \hdots, k-1$. {Observe that the condition $y_i<x_i$ prevents the appearance of an $\infty$-sub-digraph.} Figure \ref{fig:tipo345} (center) shows such a digraph with $k=3$ added arcs, generating the three cycles $\vec{C}_{r_1}, \vec{C}_{r_2}$ and $\vec{C}_{r_3}$.\\

The only strongly connected induced sub-digraphs of $D_4$ are the cycles $\vec{C}_{r_1},\hdots,\vec{C}_{r_k}$  (where $C_{r_i}$ is the digraph induced {by the vertices in-between} $y_i$ and $x_i$ for all $i=1, \hdots, k$) besides the digraph $D_4$ and isolated vertices. Then, 
\[\Pi(D_4)=\{0,1,\rho(D_4)\}.\]


\noindent \textbf{Type 5 digraph}\\

Let us denote by $D_5$ the Type 5 digraphs $D_5=\vec{C_n}\cup\{e,e',e''\}$ where $e=(1,i)$, $e'=(i-1,j)$ and $e''=(j-1,2)$ with $3<i<i+1<j\leq n$. In this case, condition $3<i$ as much as condition $i+1<j$ prevents the appearance of an $\infty$-sub-digraph. Figure \ref{fig:tipo345} (right) shows an example of a digraph in this family.

The only strongly connected induced sub-digraphs {of $D_5$} are the cycles $\vec{C}_{n-(i-2)}$, $\vec{C}_{n-(j-i)}$ and $\vec{C}_{j-2}$ besides the digraph $D_5$ and isolated vertices. Then, we have
\[\Pi(D_5)=\{0,1,\rho(D_5)\}.\]

\begin{figure}[h!]
\centering
\includegraphics[scale=0.15]{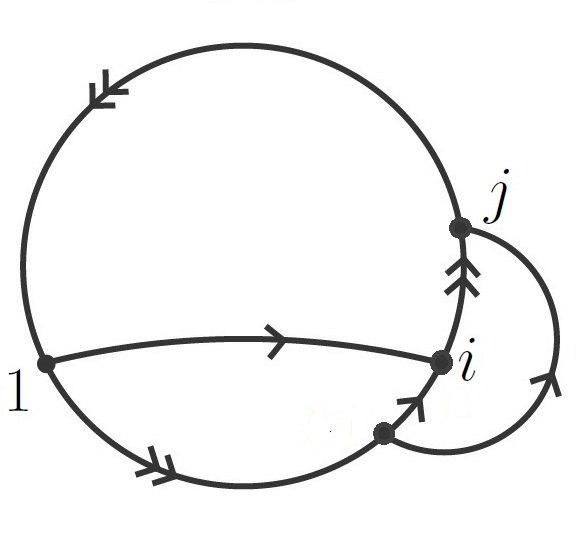}
\hspace{.5cm}
\includegraphics[scale=0.12]{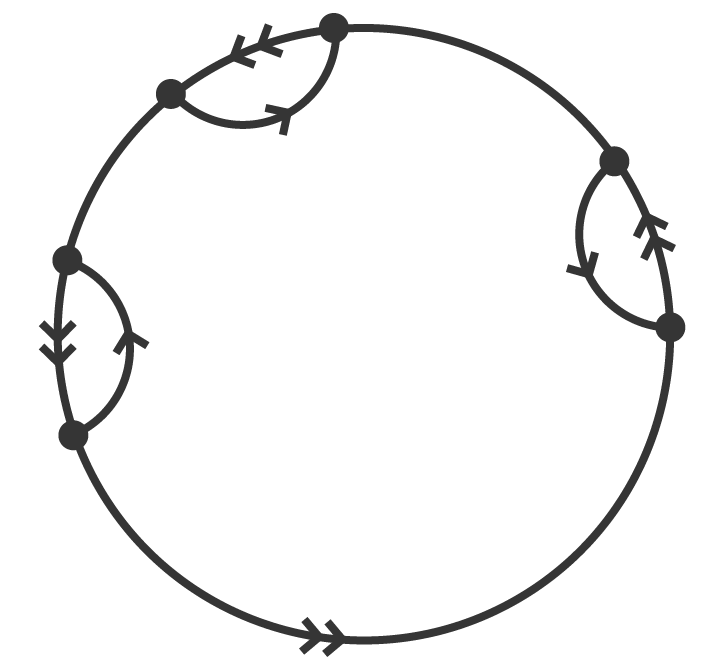}
\hspace{.5cm}
\includegraphics[scale=0.12]{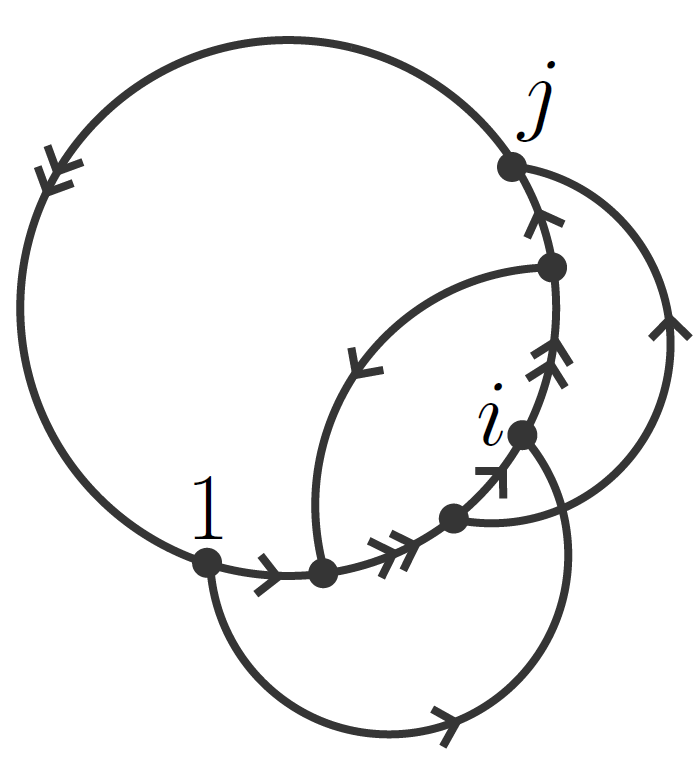}
\caption{Type 3 digraph (left), Type 4 digraph (center) and Type 5 digraph (right). {Observe that one $\theta$-subdigraph can be identified in these three examples by taking the larger (round) cycle in each case, and any other arc.}}
\label{fig:tipo345}
\end{figure}

It is easy to see that the seven types of digraphs presented above are non-isomorphic to each other.\\

\noindent \textbf{All digraphs with complementarity spectrum of size 3}\\

Let \(F_{\infty}\) denote the set of digraphs that are either \(\infty\)-digraphs or Type \(i\) digraphs for \(i \in \{1, 2\}\), and let \(F_{\theta}\) represent the set of digraphs that are either \(\theta\)-digraphs or Type \(j\) digraphs for \(j \in \{3, 4, 5\}\). These sets are referred to as the \(\infty\)-Family (resp. the \(\theta\)-Family). The following theorem, presented in \cite{flor2}, asserts that \(\mathcal{SCD}_3 = F_{\infty} \cup F_{\theta}\). This finding lays a foundational role in this paper, as it enables the exploration of these distinct families of digraphs to comprehensively understand the spectrum of digraphs with three complementarity eigenvalues.

\begin{teo}
\label{main_theorem}
Let $D$ be a digraph in $\mathcal{SCD}_3$. Then $D$ belongs either to the $\infty$-Family or to the $\theta$-Family, i.e., $\mathcal{SCD}_3 = F_{\infty} \cup F_{\theta}.$ Precisely $D$ is either an $\infty$-digraph, or $\theta$-digraph, or a Type $1$, or a Type $2$ , or a Type $3$, or a Type $4$, or Type $5$ digraph.

\end{teo}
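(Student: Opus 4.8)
The plan is to argue by analyzing the ``cycle structure'' of an arbitrary $D \in \mathcal{SCD}_3$. By Theorem~\ref{caracterizacion}, the only proper induced strongly connected sub-digraphs of $D$ are isolated vertices and cycles. In particular $D$ is not itself a cycle (otherwise $\#\Pi(D)=2$), so $D$ contains at least two distinct cycles, and every induced proper strongly connected sub-digraph is a cycle. The first step is to collect the constraints this imposes: no vertex of $D$ can have out-degree (or in-degree) $\geq 2$ in a way that creates an induced sub-digraph which is strongly connected but not a single cycle; more importantly, no two cycles of $D$ may intersect in a set of vertices whose induced sub-digraph fails to be a cycle. I would first treat the case where $D$ contains two cycles sharing \emph{exactly one} vertex: then locally $D$ looks like an $\infty$-digraph, and the remaining arcs of $D$ (beyond those of the two cycles) must be analyzed. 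I would show that any such extra arc, to avoid creating a forbidden induced sub-digraph, must be one of the ``chords'' described in the definitions of Type~1 and Type~2 — i.e.\ an arc from $r$ to $2'$ and/or from $s'$ to $2$ — and that no further arcs are possible. This yields $D \in F_\infty$.

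The second case is when no two cycles of $D$ share exactly one vertex. Pick a longest cycle $C$ in $D$. Every other cycle $C'$ must meet $C$ (since $D$ is strongly connected and, by Theorem~\ref{caracterizacion}, the union of two vertex-disjoint cycles together with a connecting path would contain an induced sub-digraph that is strongly connected but not a cycle — one must check this carefully). The intersection $V(C)\cap V(C')$ must induce a sub-digraph that is a disjoint union of directed paths lying along $C$; otherwise we again produce a forbidden induced strongly connected sub-digraph. Here the equivalent description of the $\theta$-digraph given in the excerpt — a cycle with an added simple path joining two of its vertices — is the model: each ``extra'' cycle $C'$ is obtained from $C$ by replacing one arc (or one sub-path) of $C$ with an alternate directed path. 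The combinatorics of how several such alternate paths can coexist on a single cycle $C$, without two of them creating an $\infty$-sub-digraph (two alternate paths whose endpoints interleave) or a forbidden induced sub-digraph (alternate paths that share internal vertices, or nest improperly), is precisely what cuts the possibilities down to $\theta$, Type~3, Type~4 and Type~5. I would organize this by the number $k$ of alternate paths and their nesting/ordering pattern around $C$: $k=1$ gives $\theta$; the ``sequential non-overlapping'' pattern gives Type~4; the two ``chained'' patterns (where consecutive alternate paths share an endpoint) give Type~3 and Type~5; and any other configuration is shown to violate Theorem~\ref{caracterizacion}. This yields $D\in F_\theta$.

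Finally I would assemble the two cases: every $D\in\mathcal{SCD}_3$ falls into exactly one of them, so $\mathcal{SCD}_3 = F_\infty \cup F_\theta$, and within each family $D$ is one of the seven listed types. The converse inclusion — that each of the seven types indeed lies in $\mathcal{SCD}_3$ — has already been verified in the excerpt, where the induced strongly connected sub-digraphs of each type were listed explicitly and seen to consist only of cycles and isolated vertices, giving $\#\Pi = 3$ in each case.

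The main obstacle I anticipate is the exhaustive case analysis in the $\theta$-Family case: proving that \emph{no} configuration of alternate paths on a cycle beyond the four named types can occur. The delicate point is ruling out, via Theorem~\ref{caracterizacion}, every ``bad'' induced sub-digraph — in particular showing that two interleaved alternate paths always force an induced $\infty$-sub-digraph, and that overlapping or misaligned alternate paths force an induced strongly connected sub-digraph that is neither a cycle nor an isolated vertex. This requires a careful, essentially topological, argument about chords of a directed cycle, and is where the bulk of the work lies; the $\infty$-Family case and the verification of three-element spectra are comparatively routine.
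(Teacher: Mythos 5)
First, a point of reference: the paper does not prove Theorem~\ref{main_theorem} at all --- it is imported verbatim from \cite{flor2} (the later reference to ``Case 1 of Theorem 4.2 in \cite{flor2}'' is a glimpse of where the actual work lives). So your proposal is being measured against what a self-contained proof would require, not against an in-paper argument. Your organizing dichotomy is the right one and matches how the paper itself partitions the classification: $D$ either contains an $\infty$-subdigraph (two cycles meeting in exactly one vertex), in which case that subdigraph must be induced and spanning by Theorem~\ref{caracterizacion} and $D\in F_\infty$, or it does not, in which case $D\in F_\theta$. That skeleton is sound.

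The genuine gap is that everything beyond the skeleton is deferred. Each load-bearing step is announced (``I would show that any such extra arc must be one of the chords of Type 1 or Type 2,'' ``the combinatorics \dots is precisely what cuts the possibilities down to $\theta$, Type 3, Type 4 and Type 5'') rather than carried out, and those steps \emph{are} the theorem: the content of the classification is precisely the exhaustive verification that every other chord configuration on $\vec{C}_n$ or on $\infty(r,s)$ produces a proper induced strongly connected subdigraph that is not a cycle. Concretely, you have not (i) ruled out two vertex-disjoint cycles in $D$ --- your parenthetical ``one must check this carefully'' is apt, since the strongly connected union of the two cycles with connecting paths could a priori be all of $D$ and hence not a \emph{proper} induced subdigraph, so the forbidden configuration has to be extracted from a strict subset of the vertices; (ii) shown that a forward chord and a backward chord cannot coexist except in the Type 2 / Type 5 patterns; (iii) shown that two interleaved alternate paths force an induced (not merely a) $\infty$-subdigraph; nor (iv) shown that the chained patterns close up only as Type 3 (two chords) or Type 5 (three chords, with the wrap-around condition $3<i$, $i+1<j$) and never with four or more links. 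Until that case analysis is written down, the proposal is a plausible roadmap for reproving the result of \cite{flor2}, not a proof.
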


Adapting the definition from \cite{Pinheiro2020}, we define a strongly connected digraph \(D\) as \textit{determined by its complementarity spectrum} (DCS) if any cospectral digraph \(H\) is either isomorphic to \(D\) or differs in the number of vertices from \(D\).\\

Based on the characterization results discussed earlier, it is straightforward to ascertain that isolated vertices and cycles qualify as DCS digraphs. However, \cite{flor} presents an example of non-isomorphic Type 4 digraphs sharing the same complementarity spectrum, illustrating that complementarity eigenvalues alone may not always uniquely determine a digraph. Consequently, this paper aims to investigate specific families of digraphs that exhibit the DCS property.\\

Consider a class of digraphs \(\mathcal{D}\), all of the same order, where each element \(D \in \mathcal{D}\) possesses a distinct complementarity spectrum. That is, for any \(H, D \in \mathcal{D}\), \(\Pi(H) = \Pi(D)\) implies \(H\) and \(D\) are isomorphic.\\ 

In this context, digraphs within \(\mathcal{D}\) are said to be \textit{determined by their complementarity spectrum in} \(\mathcal{D}\), abbreviated as \(\mathcal{D}\) is DCS. This designation suggests that while digraphs in these classes are DCS within their specific group, this attribute could pave the way to demonstrate their DCS status more broadly.\\

The remainder of this paper is dedicated to examining which of the seven digraph families outlined in Theorem \ref{main_theorem} are DCS. Section \ref{sec_Finfinito} employs basic and classical analysis tools to explore the characteristic polynomials of select digraphs, yielding insights into these families. Given the limitations of these tools for analyzing \(\theta\)-digraphs, Section \ref{sec_Ftheta} adopts alternative algebraic techniques to examine the roots of the characteristic polynomials.

\section{Classes of Digraphs in \(F_{\infty}\) Determined by Their Complementarity Spectrum}
\label{sec_Finfinito}

In this section, we explore the three families within \(F_{\infty}\): the \(\infty\)-digraphs, and Type 1 and Type 2 digraphs. Our analysis begins with a comparison of the spectral radii of two \(\infty\)-digraphs. Considering our interest in digraphs of identical order, adjusting the node count of one cycle necessitates a compensatory adjustment in the other cycle's size. This adjustment process delineates a hierarchy in the spectral radii of the digraphs, echoing a similar finding reported in \cite{Lin2012}.




\begin{teo}\label{teo:inftyDCS}
Let $r,s$ be integers such that $2\leq r\leq s$, $3\leq s$ and  $r+s-1=n$. Then $\rho(\infty(r,s))>\rho(\infty(r+1,s-1))$.

\end{teo}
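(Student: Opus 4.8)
The natural approach is to work directly with the characteristic polynomials. Recall that $\infty(r,s)$ on $n = r+s-1$ vertices has characteristic polynomial $p_{r,s}(x) = x^n - x^{s-1} - x^{r-1}$, and its spectral radius $\rho(\infty(r,s))$ is the unique real root of $p_{r,s}$ in $(1,\infty)$ (it exceeds $1$ since $\rho(\vec C_k)=1$ for the subcycles and the $\infty$-digraph strictly contains them, by Lemma~\ref{lem:sub}; uniqueness follows because $p_{r,s}(x)/x^{r-1} = x^{s} - x^{s-r} - 1$ is strictly increasing on $(1,\infty)$, or more simply from Perron--Frobenius applied to the irreducible nonnegative adjacency matrix). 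Write $\rho = \rho(\infty(r,s))$, so $\rho^n = \rho^{s-1} + \rho^{r-1}$. The plan is to evaluate the polynomial $q(x) := x^n - x^{s-2} - x^{r}$ of $\infty(r+1,s-1)$ (note the exponents shift: $s-1 \mapsto s-2$ and $r-1 \mapsto r$) at the point $x = \rho$ and show $q(\rho) < 0$; since $q$ is likewise eventually positive and has its Perron root as the unique root in $(1,\infty)$, $q(\rho)<0$ forces $\rho(\infty(r+1,s-1)) > \rho$, which is the claim.

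To compute $q(\rho)$, substitute $\rho^n = \rho^{s-1}+\rho^{r-1}$:
\[
q(\rho) = \rho^{s-1} + \rho^{r-1} - \rho^{s-2} - \rho^{r} = \rho^{s-2}(\rho - 1) - \rho^{r-1}(\rho - 1) = (\rho-1)\bigl(\rho^{s-2} - \rho^{r-1}\bigr).
\]
Since $r \le s$ and $3 \le s$ we have $r - 1 \le s - 1$; the condition $r \le s$ together with $r+s-1 = n$ and $3\le s$ should be arranged to give $r-1 < s-2$, i.e. $r < s-1$ — and one must check the boundary case $r = s-1$ (forcing $r-1 = s-2$, where $q(\rho)=0$) and the case $r=s$ separately. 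When $r \le s-1$ with strict inequality $r-1 < s-2$, we get $\rho^{s-2} > \rho^{r-1}$ (as $\rho>1$), hence $q(\rho) = (\rho-1)(\rho^{s-2}-\rho^{r-1}) > 0$ — wait, that has the wrong sign, so I must instead compare against the polynomial written with the correct pairing of cycle lengths: the cycles of $\infty(r+1,s-1)$ have lengths $r+1$ and $s-1$, giving $q(x) = x^n - x^{(s-1)-1} - x^{(r+1)-1} = x^n - x^{s-2} - x^{r}$, and the sign computation above then needs $\rho^{s-2} - \rho^{r}$, yielding $q(\rho) = (\rho-1)(\rho^{s-1} - \rho^{r}) \cdot(\text{sign bookkeeping})$; I will recompute carefully: $q(\rho) = \rho^{s-1}+\rho^{r-1} - \rho^{s-2} - \rho^{r} = \rho^{s-2}(\rho-1) - \rho^{r-1}(\rho-1) = (\rho-1)(\rho^{s-2}-\rho^{r-1})$, and since $s - 2 \ge r - 1$ fails exactly when $r = s$, in all cases with $r \le s$ and $s\ge 3$ except possibly $r=s$ we have $s-2 \ge r-1$, giving $q(\rho) \ge 0$ — so $\rho(\infty(r+1,s-1)) \le \rho$, the opposite of the claim. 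This tells me the roles of the two digraphs must be swapped in the substitution: I should instead set $\rho = \rho(\infty(r+1,s-1))$, use its polynomial identity $\rho^n = \rho^{s-2} + \rho^{r}$, and evaluate $p_{r,s}(\rho) = \rho^n - \rho^{s-1} - \rho^{r-1} = \rho^{s-2}+\rho^r - \rho^{s-1} - \rho^{r-1} = \rho^{s-2}(1-\rho) + \rho^{r-1}(\rho - 1) = (\rho-1)(\rho^{r-1} - \rho^{s-2})$, which is $\le 0$ when $r - 1 \le s - 2$. Then $p_{r,s}(\rho) < 0$ (strictly, when $r-1 < s-2$) forces $\rho(\infty(r,s)) > \rho = \rho(\infty(r+1,s-1))$, as desired.

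The main obstacle is therefore not conceptual but bookkeeping: getting the exponent shifts right and handling the boundary cases. Specifically I will need to (i) verify that under the hypotheses $2 \le r \le s$, $3 \le s$, $r+s-1=n$, the exponent inequality $r-1 < s-2$ — equivalently $r+1 < s$ — holds, and if it does not (the cases $r = s$ and $r = s-1$, and also $r=s-1$ being possibly excluded, or $r=2,s=3$), treat those degenerate situations directly, e.g. $\infty(2,3)$ vs $\infty(3,2) \cong \infty(2,3)$, where the two digraphs coincide or where a separate small argument is needed; and (ii) justify rigorously that $p_{r,s}$ has exactly one root in $(1,\infty)$ and is negative on $(1,\rho(\infty(r,s)))$, so that $p_{r,s}(\rho)<0$ for $\rho>1$ implies $\rho < \rho(\infty(r,s))$ — this follows from Perron--Frobenius (the Perron root is simple and strictly dominates) combined with the fact that $p_{r,s}(x) \to +\infty$. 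With those two points nailed down, the one-line substitution $p_{r,s}(\rho(\infty(r+1,s-1))) = (\rho-1)(\rho^{r-1} - \rho^{s-2}) < 0$ finishes the proof.
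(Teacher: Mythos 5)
Your proposal is correct and, once the sign bookkeeping settles, its core is the same as the paper's: both arguments reduce to showing that $p_{r,s}(\rho_2)<0$ where $\rho_2=\rho(\infty(r+1,s-1))$, and your identity $p_{r,s}(\rho_2)=(\rho_2-1)\left(\rho_2^{r-1}-\rho_2^{s-2}\right)$ is, up to sign, exactly the paper's factorization $h(x)=x^{n-s}(x-1)(x^{s-r-1}-1)$ of the difference of the two characteristic polynomials, evaluated at $\rho_2$. Where you differ is the concluding step: the paper deduces $\rho_2<\rho_1$ by proving $p_{r,s}$ is strictly increasing on $(1,\infty)$ (Descartes' rule together with the mean value theorem and Bolzano), whereas you invoke Perron--Frobenius to say that $\rho_1$ is the largest real root of $p_{r,s}$, so $p_{r,s}>0$ on $(\rho_1,\infty)$ and $p_{r,s}(\rho_2)<0$ forces $\rho_2<\rho_1$; your route is shorter and dispenses with the monotonicity argument. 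You also correctly flag the point the paper glosses over: both proofs need $r-1<s-2$, i.e.\ $s-r-1\ge 1$, which the paper asserts is ``easy to see'' but which fails under the stated hypotheses when $r=s-1$ (where $\infty(r+1,s-1)\cong\infty(r,s)$ and the radii are equal) and when $r=s$ (where the inequality in fact reverses, cf.\ $\rho(\infty(5,5))<\rho(\infty(4,6))$ in Figure~\ref{fig:comparacionInfinitosnCte}). The theorem really requires $r\le s-2$, so your insistence on isolating those boundary cases is a genuine improvement over the published proof rather than a gap in your own.
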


\begin{proof}
Let $p_1(x)=x^n-x^{n-r}-x^{n-s}$ and $p_{2}(x)=x^n-x^{n-r-1}-x^{n-s+1}$ represent the characteristic polynomials of $\infty(r,s)$ and $\infty(r+1,s-1)$. Denote $\rho_1=\rho(\infty(r,s))$ and $\rho_2=\rho(\infty(r+1,s-1))$.\\

If $h(x)= p_{2}(x)-p_1(x)$, then
\[h(x)=-x^{n-r-1}+x^{n-r}-x^{n-s+1}+x^{n-s}=\]\[=x^{n-s}(-x^{s-r-1}+x^{s-r}-x+1)=x^{n-s}(x-1)(x^{s-r-1}-1).\]

It is easy to see that $s-r-1 \geq 1$, so $h(x)>0$ for all $x>1$. Given that $\rho_2>1$,  
\[0<h({\rho}_2)=p_{2}({\rho}_2)-p_1({\rho}_2)=-p_1({\rho}_2).\]

Therefore, it suffices to show that $p_1$ is strictly increasing in $(1, +\infty)$, because  $p_1({\rho}_2)<0=p_1(\rho_1)$ which implies $\rho_2 < \rho_1$.

Indeed, $p_1'(x)= nx^{n-1}-(n-r)x^{n-r-1}-(n-s)x^{n-s-1}$ which has exactly one positive root by Descartes' rule.
By virtue of the mean value theorem there exists $c$ in $(0,1)$ such that \[p_1'(c)=\frac{p_1(1)-p_1(0)}{1-0}=-1.\]

Additionally, $p_1'(1)=r-n+s=1>0$, and by Bolzano's theorem, there exists a root of $p_1'$ in $(c,1)$. Since $p_1'$ has only one positive root, $p_1'(x)>0$ for all $x$ in $(1,\infty)$. Then, $p_1$ is strictly increasing, completing the proof.
\end{proof}

\begin{cor}

Let $n\geq 3$ be an integer, let $\rho$ and ${\rho}'$ be the spectral radii of digraphs $\infty(r,s)$ and $\infty({r}',{s}')$, both with $n$ vertices. If $r<{r}'$ then $\rho > {\rho}'$.
\end{cor}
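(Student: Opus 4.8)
The plan is to deduce the corollary from Theorem~\ref{teo:inftyDCS} by an elementary induction on the gap between the two digraphs. Fix $n \geq 3$ and consider two $\infty$-digraphs on $n$ vertices, $\infty(r,s)$ and $\infty(r',s')$, with $r < r'$. Recall that the number of vertices of $\infty(r,s)$ is $r+s-1$, so $r+s = r'+s' = n+1$; hence $r < r'$ forces $s > s'$, and in particular the pair is genuinely different. Using the isomorphism $\infty(r,s) \cong \infty(s,r)$ noted in the preliminaries, I may normalize each digraph so that its first parameter is the smaller of the two cycle lengths; equivalently, I can assume $2 \leq r \leq s$ and $2 \leq r' \leq s'$ when applying Theorem~\ref{teo:inftyDCS}, and then reconcile the labels at the end. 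The cleanest route is to show the single-step statement $\rho(\infty(r,s)) > \rho(\infty(r+1,s-1))$ holds whenever $\infty(r,s)$ and $\infty(r+1,s-1)$ are both well-defined digraphs on $n$ vertices, and then chain it.

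First I would record the single-step inequality in a form that covers all admissible $r$. Theorem~\ref{teo:inftyDCS} gives $\rho(\infty(r,s)) > \rho(\infty(r+1,s-1))$ under the hypotheses $2 \leq r \leq s$, $3 \leq s$, $r+s-1 = n$. Two boundary cases need separate attention: when $r = s$ (so that $\infty(r+1,s-1)$ has its parameters out of order, but by the isomorphism equals $\infty(s-1,r+1) = \infty(r-1,r+1)$, which is covered by the theorem with the roles reversed), and when $s = 2$ (forbidden, since then $\infty(r,s)$ would need $r \geq 2 = s$, giving $r = s = 2$ and $n = 3$, a case one checks directly, and $\infty(r+1,s-1) = \infty(3,1)$ is not a valid $\infty$-digraph as it requires $s-1 \geq 2$). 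So the single-step comparison is valid precisely for consecutive pairs $(r,s), (r+1,s-1)$ with $2 \leq r$, $s-1 \geq 2$, i.e. for $2 \leq r \leq n-2$.

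Next I would iterate. Given $r < r'$ with $r+s = r'+s' = n+1$, write
\[
\rho(\infty(r,s)) > \rho(\infty(r+1,s-1)) > \cdots > \rho(\infty(r',s')),
\]
a chain of $r'-r$ strict inequalities, each supplied by the single-step statement applied at the intermediate pairs $(r+j, s-j)$ for $j = 0, 1, \dots, r'-r-1$. I must check that every intermediate digraph $\infty(r+j, s-j)$ is legitimate, i.e. that $r+j \geq 2$ and $s-j \geq 2$ throughout: the first is clear since $r \geq 2$, and the second holds because $s - j \geq s - (r'-r) = s' \geq 2$ for all $j$ in range. Transitivity of $>$ then yields $\rho(\infty(r,s)) > \rho(\infty(r',s'))$, which after undoing the normalization of parameter order is exactly the claim.

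The main obstacle, such as it is, is bookkeeping rather than substance: making sure the hypotheses of Theorem~\ref{teo:inftyDCS} ($2 \leq r \leq s$, $3 \leq s$) are met at every step of the chain, handling the symmetric relabeling $\infty(r,s) \cong \infty(s,r)$ consistently so that ``decreasing the first parameter'' always refers to the same normalized form, and dispatching the degenerate small-$n$ cases (e.g. $n = 3$, where the only $\infty$-digraph is $\infty(2,2)$ and the statement is vacuous). None of these present a genuine difficulty; the corollary is essentially a telescoping of the theorem.
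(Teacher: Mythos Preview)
Your proposal is correct and is exactly the argument the paper intends: the corollary is stated without proof because it is the immediate telescoping of Theorem~\ref{teo:inftyDCS} along the chain $(r,s),(r+1,s-1),\dots,(r',s')$. Your boundary discussion is a bit more elaborate than needed---under the normalization $r\le s$, $r'\le s'$ one has $r+j\le s-j$ for every $0\le j\le r'-r$, so the ``$r=s$'' digression never actually arises in the chain---but the core reasoning is the intended one.
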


The results presented permit the establishment of a hierarchy based on the spectral radius within the class of $\infty$-digraphs of order $n$. Figure \ref{fig:comparacionInfinitosnCte} illustrates $\infty$-digraphs of order $9$, sequenced by their spectral radius, which coincidentally aligns with the lexicographic order.\\

\begin{figure}[h!]
\centering
\includegraphics[scale=0.075]{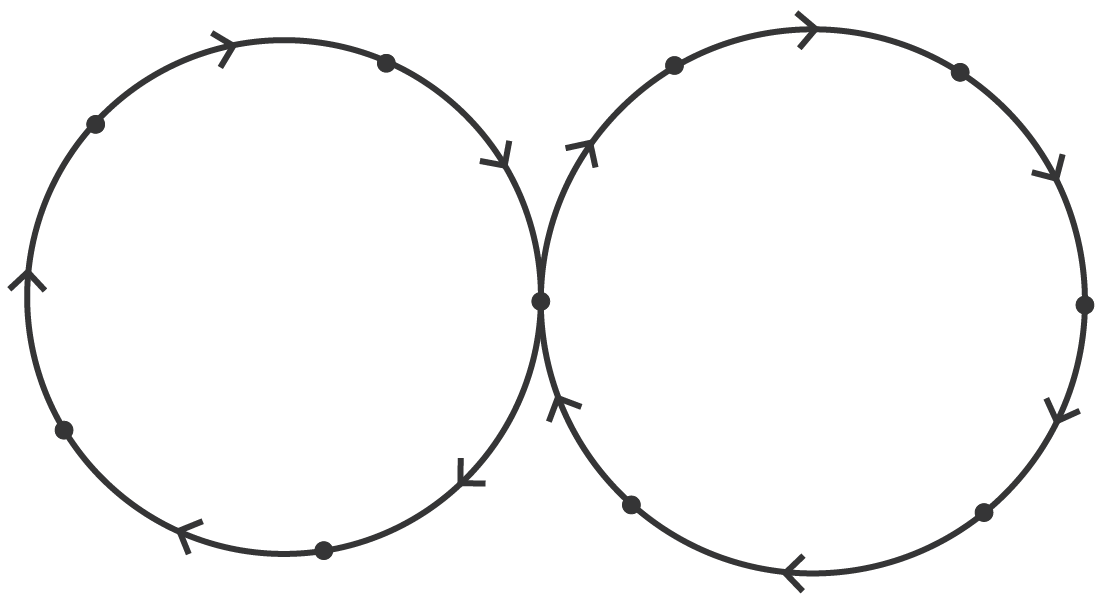}
\includegraphics[scale=0.075]{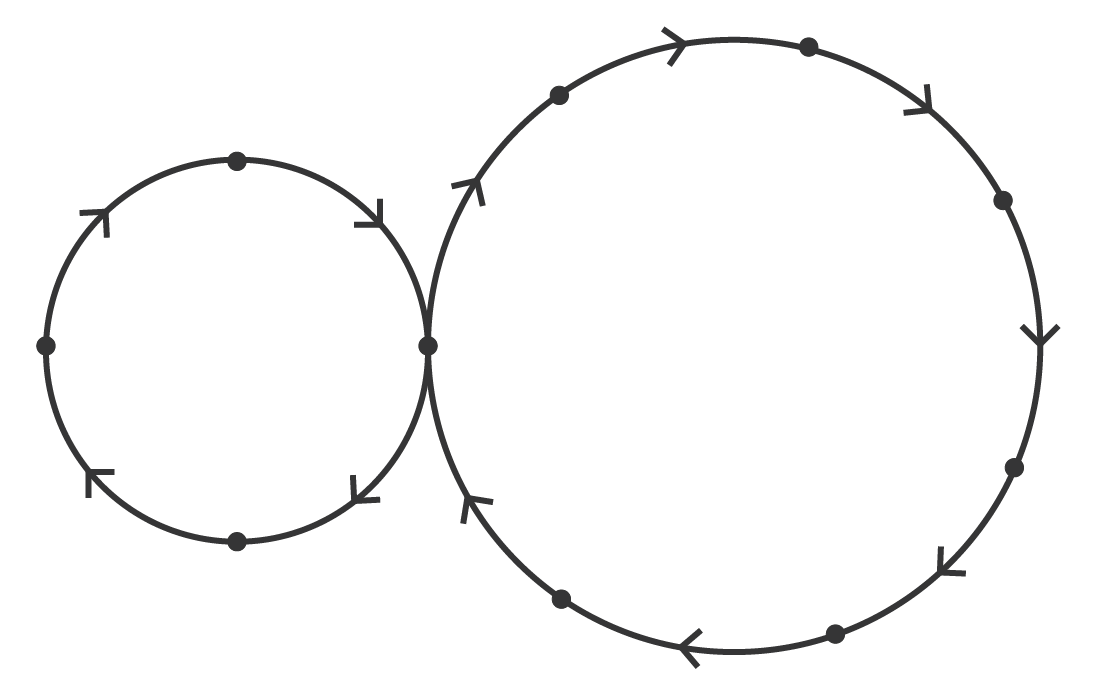}
\includegraphics[scale=0.075]{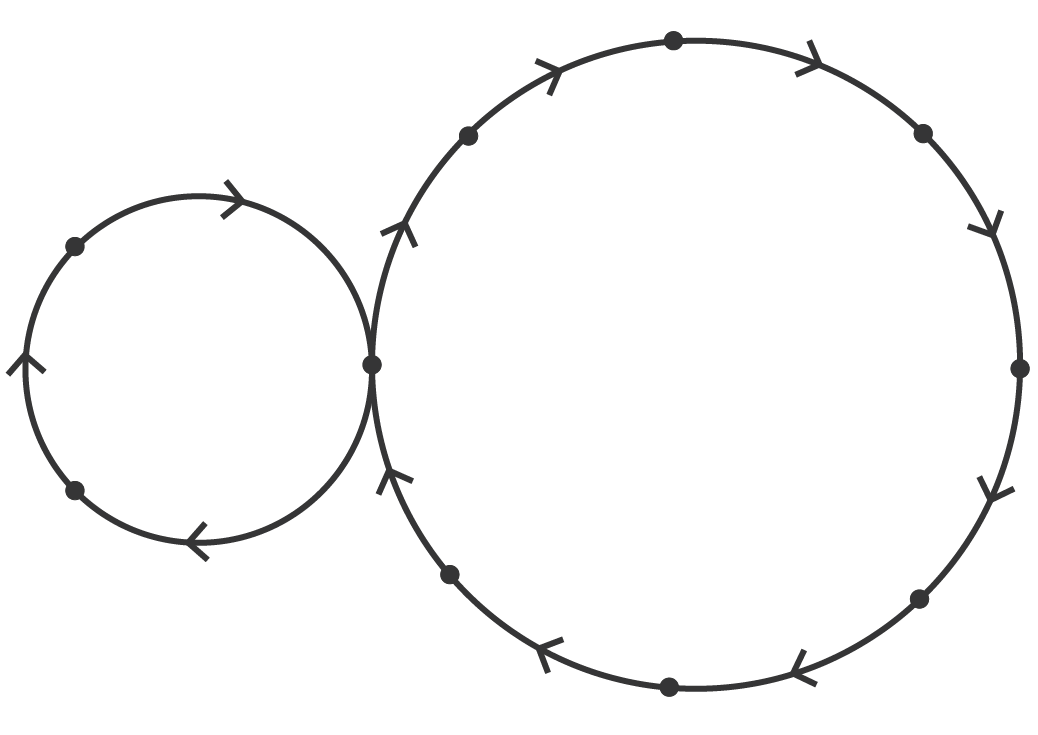}
\includegraphics[scale=0.075]{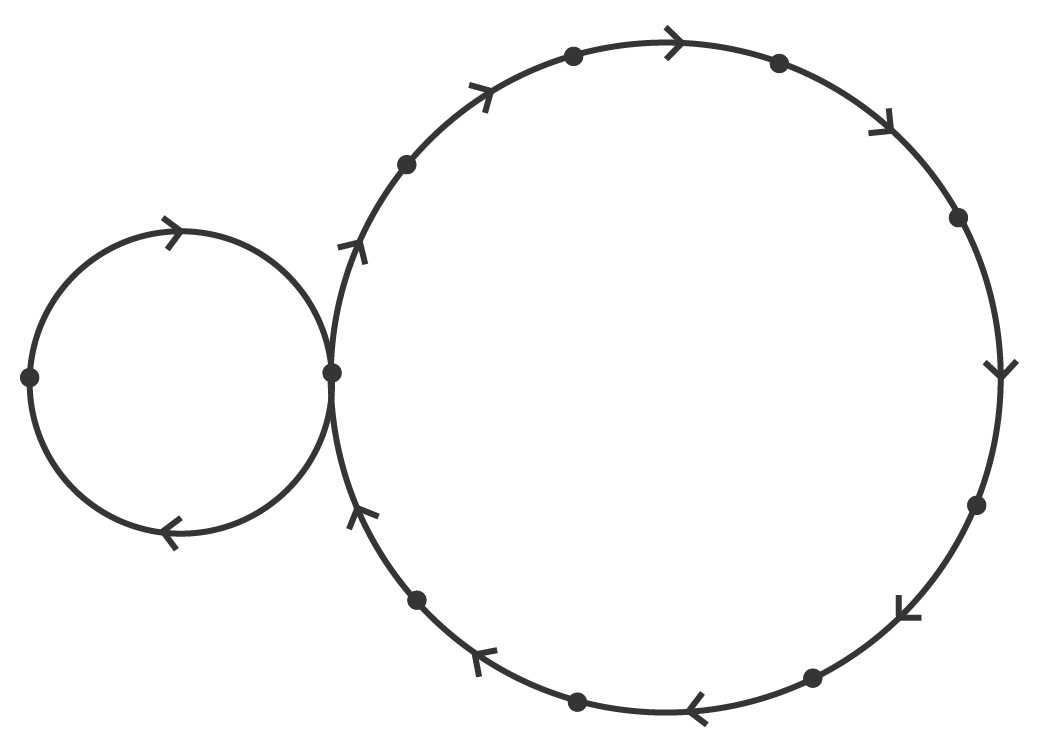}
\caption{$\rho(\infty(5,5))<\rho(\infty(4,6))<\rho( \infty(3,7))<\rho( \infty(2,8))$.}.
\label{fig:comparacionInfinitosnCte}
\end{figure}

As previously discussed, the analysis of the complementarity spectrum of these digraphs is simplified to examining their spectral radius. Consequently, we are able to establish the following result.

\begin{cor}
The class of $\infty$-digraphs is DCS.
\end{cor}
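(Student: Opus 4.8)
The goal is to show that the class of $\infty$-digraphs of a fixed order $n$ is DCS, meaning no two non-isomorphic $\infty$-digraphs on $n$ vertices share the same complementarity spectrum. The plan is to reduce the problem entirely to the spectral radius. Recall that for any $\infty$-digraph we have established $\Pi(\infty(r,s)) = \{0, 1, \rho(\infty(r,s))\}$, so two $\infty$-digraphs on $n$ vertices are cospectral (in the complementarity sense) if and only if they have the same spectral radius. Hence it suffices to show that the map $(r,s) \mapsto \rho(\infty(r,s))$ is injective on the set of admissible pairs with $r + s - 1 = n$ and $2 \le r \le s$.

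First I would fix a normal form: since $\infty(r,s) \cong \infty(s,r)$, every isomorphism class of $\infty$-digraph on $n$ vertices is represented by a unique pair $(r,s)$ with $2 \le r \le s$ and $r + s - 1 = n$. Two such pairs $(r,s)$ and $(r',s')$ with $r \le r'$ therefore represent distinct digraphs precisely when $r < r'$. Then I would invoke the Corollary immediately preceding the statement (the one derived from Theorem~\ref{teo:inftyDCS}): if $r < r'$ then $\rho(\infty(r,s)) > \rho(\infty(r',s'))$. In particular the spectral radii are distinct, so the two digraphs have different complementarity spectra. This shows that any two non-isomorphic $\infty$-digraphs of order $n$ are not cospectral, which is exactly the DCS property for the class.

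The only remaining point is to observe that the hierarchy established by the Corollary is a strict total order on the admissible pairs, so there are no ``ties'' left unaccounted for; this is immediate since $r$ ranges over the integers from $2$ to $\lceil (n+1)/2 \rceil$ and the map $r \mapsto \rho(\infty(r, n+1-r))$ is strictly decreasing. I do not expect any real obstacle here: the substantive work is already contained in Theorem~\ref{teo:inftyDCS} and its Corollary, and the proof of this corollary is essentially a bookkeeping argument combining the reduction $\Pi(\infty(r,s)) = \{0,1,\rho(\infty(r,s))\}$ with the injectivity of the spectral radius on fixed-order $\infty$-digraphs. The one thing to be careful about is the edge/degenerate cases (for instance when $n$ is such that the maximal $r$ equals the minimal $s$, i.e., $r = s$), but the constraints $2 \le r \le s$ and $r+s-1 = n$ already handle these uniformly.
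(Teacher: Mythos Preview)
Your proposal is correct and follows exactly the (implicit) argument in the paper: the paper simply remarks that the complementarity spectrum reduces to the spectral radius and then states the corollary without further proof, relying on the preceding Corollary to Theorem~\ref{teo:inftyDCS} for the injectivity of $\rho$ on the normal forms $(r,s)$ with $2\le r\le s$ and $r+s-1=n$. The only slip is a typo in the range of $r$: you want $r\le \lfloor (n+1)/2\rfloor$, not $\lceil (n+1)/2\rceil$, but this is cosmetic and does not affect the argument.
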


The following example shows that the class of Type 1 digraphs is not determined by their complementarity spectrum.

\begin{ej}
\end{ej}
\noindent

Let $2 \leq r < s$ be integers. The digraphs $D_{1a}(r,s)$ and $D_{1b}(r,s)$, as depicted in Figure~\ref{fig:tipo1ab}, are not isomorphic. Nonetheless, their characteristic polynomials are identical, with $p_{D_{1a}}(x) = x^n - x^{s-1} - x^{r-1} - 1 = p_{D_{1b}}(x)$. Therefore, both digraphs are cospectral and, specifically, complementarity cospectral, since they share the same spectral radius.\\


\begin{figure}[h!]
\centering
\includegraphics[scale=0.1]{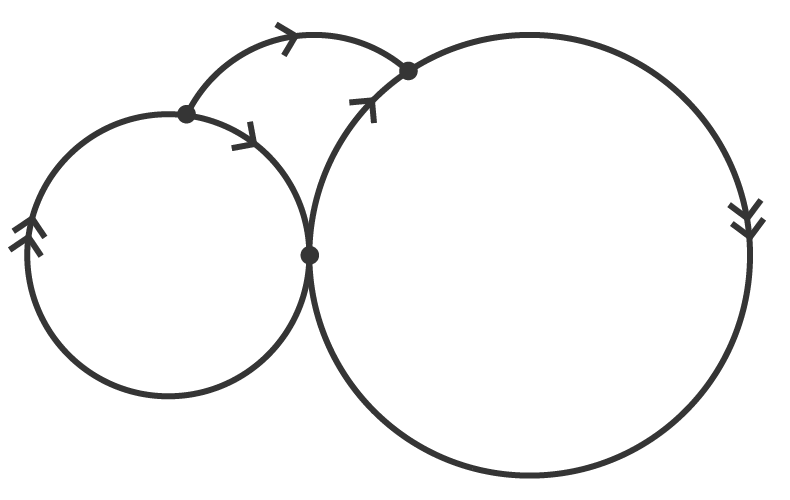}
\hspace{0.5cm}
\includegraphics[scale=0.15]{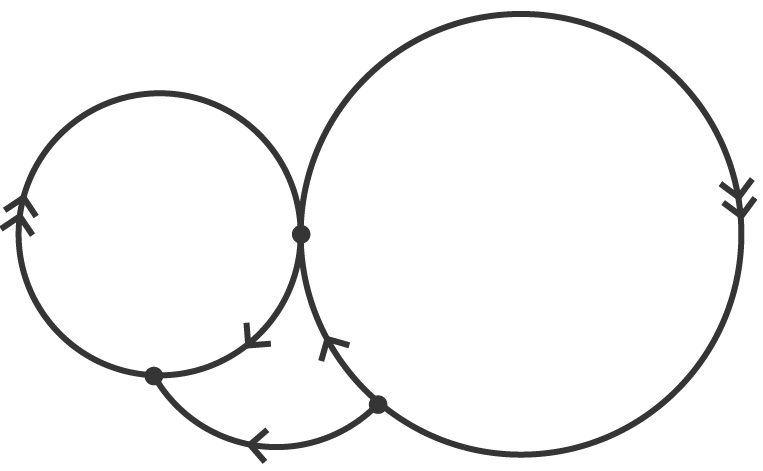}
\caption{{Digraphs $D_{1a}(r,s)$ and $D_{1b}(r,s)$}}
\label{fig:tipo1ab}
\end{figure}

The aforementioned example serves as an initial instance among several throughout this paper, illustrating that strongly connected digraphs are not generally distinguished by their complementarity spectra (DCS). This finding is documented in \cite{flor}.\\

Nonetheless, employing a reasoning similar to that presented in Theorem \ref{teo:inftyDCS}, it can be shown that the complementarity spectrum does effectively differentiate between the classes of Type $1a$ digraphs and Type $1b$ digraphs individually.

\begin{teo}
Let $r,s$ be integers such that $2\leq r\leq s$, $3\leq s $ and $r+s-1=n$. Then, $\rho(D_{1a}(r,s))>\rho(D_{1a}(r+1,s-1))$.
\end{teo}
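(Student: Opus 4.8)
The plan is to mimic the proof of Theorem~\ref{teo:inftyDCS} almost verbatim, replacing the $\infty$-polynomials by the Type~$1a$ polynomials. Write $p_1(x)=x^n-x^{s-1}-x^{r-1}-1$ for the characteristic polynomial of $D_{1a}(r,s)$ and $p_2(x)=x^n-x^{s-2}-x^{r}-1$ for that of $D_{1a}(r+1,s-1)$, and denote $\rho_1=\rho(D_{1a}(r,s))$, $\rho_2=\rho(D_{1a}(r+1,s-1))$. Both spectral radii exceed $1$, since each digraph properly contains a cycle and is strongly connected (Lemma~\ref{lem:sub}, or just note $p_i(1)=-2<0$ while $p_i(x)\to+\infty$).

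First I would compute the difference $h(x)=p_2(x)-p_1(x)$. The constant terms cancel, leaving
\[
h(x)=-x^{s-2}+x^{s-1}-x^{r}+x^{r-1}=x^{r-1}(x-1)(x^{s-r-1}-1),
\]
exactly the same expression (up to the harmless shift of the leading exponent) as in Theorem~\ref{teo:inftyDCS}. Since $2\le r\le s$ and $r+s-1=n$ with $3\le s$, one checks $s-r-1\ge 0$; if $s=r$ then $h\equiv 0$ forces $r+1>s-1$, i.e.\ there is no digraph $D_{1a}(r+1,s-1)$ to compare against, so we may assume $s-r-1\ge 1$ and hence $h(x)>0$ for all $x>1$. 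Evaluating at $\rho_2$ gives $0<h(\rho_2)=p_2(\rho_2)-p_1(\rho_2)=-p_1(\rho_2)$, so $p_1(\rho_2)<0=p_1(\rho_1)$.

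It then remains to show $p_1$ is strictly increasing on $(1,+\infty)$, which by the intermediate value theorem yields $\rho_2<\rho_1$. Here $p_1'(x)=nx^{n-1}-(s-1)x^{s-2}-(r-1)x^{r-2}$ has exactly one positive root by Descartes' rule of signs (one sign change). As in the model proof, the mean value theorem gives $c\in(0,1)$ with $p_1'(c)=p_1(1)-p_1(0)=-2-(-1)=-1<0$, while $p_1'(1)=n-(s-1)-(r-1)=n-(r+s-1)+1=1>0$; Bolzano's theorem then places the unique positive root of $p_1'$ in $(c,1)$, so $p_1'>0$ on $(1,+\infty)$ and $p_1$ is strictly increasing there. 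This completes the proof.

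I do not anticipate a genuine obstacle: the argument is structurally identical to Theorem~\ref{teo:inftyDCS}, and the only points requiring care are (i) the degenerate case $s=r$, where no comparison digraph exists and the hypothesis implicitly rules it out, and (ii) double-checking the exponents in $h(x)$ and the sign-change count for $p_1'$ — one must confirm that $r-1\ge 1$ so the $x^{r-1}$ term is genuinely present and does not merge with the constant. Both are routine.
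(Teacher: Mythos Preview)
Your proposal is correct and is exactly what the paper intends: the paper does not give an explicit proof of this statement but simply remarks that the reasoning is the same as in Theorem~\ref{teo:inftyDCS}, and your write-up carries this out faithfully (the difference $h(x)$ is literally the same polynomial since the extra $-1$ terms cancel, and the Descartes/MVT/Bolzano analysis of $p_1'$ goes through with $p_1'(1)=1$ just as before). The only cosmetic point is that your edge-case discussion should also note that $s=r+1$ is vacuous (since $D_{1a}(r+1,r)$ is not a Type~$1a$ digraph), so one may directly assume $s\ge r+2$ and hence $s-r-1\ge 1$.
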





\begin{cor}
Let $r,s,r',s'$ be integers such that $3\leq n$, $2\leq r\leq s$, $2\leq{r}'\leq{s}'$. Let $\rho$ and ${\rho}'$ be the spectral radii of digraphs $D_{1a}(r,s)$ and $D_{1a}({r}',{s}')$ both with $n$ vertices. If $r<{r}'$ then $\rho > {\rho'}$. 
\end{cor}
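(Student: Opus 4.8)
The plan is to mirror the structure of the proof of Theorem~\ref{teo:inftyDCS}. Let $p_1(x)=x^n-x^{s-1}-x^{r-1}-1$ and $p_2(x)=x^n-x^{s-2}-x^{r}-1$ be the characteristic polynomials of $D_{1a}(r,s)$ and $D_{1a}(r+1,s-1)$ respectively, and write $\rho_1=\rho(D_{1a}(r,s))$, $\rho_2=\rho(D_{1a}(r+1,s-1))$. Note that the constant term $-1$ is common to both, so the difference $h(x)=p_2(x)-p_1(x)$ is exactly the same expression as in Theorem~\ref{teo:inftyDCS}, namely $h(x)=x^{n-s}(x-1)(x^{s-r-1}-1)$, which is strictly positive for $x>1$ since $s-r-1\geq 1$ (this uses $r<s$, which follows from $2\le r\le s$ together with $r+s-1=n$ forcing $r\neq s$ once $n$ is such that... actually one should double-check the edge case $r=s$; if $r=s$ then $s-r-1=-1$ and $h$ does not have the right sign, so the hypothesis must be read as $r<s$ here, consistent with the Type~$1a$ setup where the smaller cycle is strictly smaller — I would state this carefully or invoke that $D_{1a}$ requires $r<s$).

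First I would establish that $\rho_2>1$: this is immediate from Theorem~\ref{compl_spect_induced_subdigraphs} or Observation~\ref{rem:inftyFamily}, since any digraph in $F_\infty$ strictly contains a cycle, so $\rho(D_{1a})>\rho(\vec C_n)>1$ by Lemma~\ref{lem:sub}. Then evaluating at $\rho_2$ gives $0<h(\rho_2)=p_2(\rho_2)-p_1(\rho_2)=-p_1(\rho_2)$, i.e.\ $p_1(\rho_2)<0$. Since $p_1(\rho_1)=0$ and $p_1$ is a monic polynomial tending to $+\infty$, it will suffice to show $p_1$ is strictly increasing on $(1,+\infty)$ to conclude $\rho_2<\rho_1$.

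The remaining step — showing $p_1$ is strictly increasing on $(1,\infty)$ — is where a small amount of care is needed, and it is the analogue of the Descartes/Bolzano argument in Theorem~\ref{teo:inftyDCS}. We have $p_1'(x)=nx^{n-1}-(s-1)x^{s-2}-(r-1)x^{r-2}$, which has exactly one positive root by Descartes' rule of signs (one sign change). By the mean value theorem applied to $p_1$ on $[0,1]$, there is $c\in(0,1)$ with $p_1'(c)=p_1(1)-p_1(0)$. Here $p_1(0)=-1$ and $p_1(1)=1-1-1-1=-2$, so $p_1'(c)=-1<0$. On the other hand $p_1'(1)=n-(s-1)-(r-1)=n-(r+s-2)=n-(n+1-2)=1>0$. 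Hence by Bolzano $p_1'$ has a root in $(c,1)\subset(0,1)$; since it has only one positive root, that root lies in $(0,1)$ and $p_1'>0$ on $(1,\infty)$. Therefore $p_1$ is strictly increasing there, $p_1(\rho_2)<0=p_1(\rho_1)$ forces $\rho_2<\rho_1$, and the proof is complete.

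The main obstacle I anticipate is purely bookkeeping: making sure the hypotheses ($2\le r$, $r<s$, $r+s-1=n$) are exactly what is needed for (i) $s-r-1\ge 1$ so that $h>0$ on $(1,\infty)$, and (ii) $p_1(1)<0$ and $p_1'(1)>0$ to run the Bolzano step — both of which reduce to the identity $r+s-1=n$. No genuinely new idea beyond the proof of Theorem~\ref{teo:inftyDCS} is required; the extra constant term $-1$ cancels in $h$ and only shifts $p_1(1)$ from $-1$ to $-2$, which does not affect the sign. The corollary then follows by iterating the theorem along the chain $\infty$-type moves $(r,s)\mapsto(r+1,s-1)$ from $(r,s)$ down to $(r',s')$ with $r<r'$, exactly as in the corollary following Theorem~\ref{teo:inftyDCS}.
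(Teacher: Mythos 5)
Your proposal is correct and follows essentially the same route as the paper: the single-step inequality $\rho(D_{1a}(r,s))>\rho(D_{1a}(r+1,s-1))$ is proved exactly as in Theorem~\ref{teo:inftyDCS} (the extra constant $-1$ cancels in $h$ and only shifts $p_1(1)$ from $-1$ to $-2$, which is harmless), and the corollary follows by iterating along $(r,s)\mapsto(r+1,s-1)$; your observation that this chain, for $r<r'\leq s'$ with $r+s=r'+s'$, never hits the degenerate case $s-r-1\leq 0$ is the right way to dispose of the edge case you flagged. The only slip is cosmetic: $\rho(\vec{C}_n)=1$, not $>1$, so the bound $\rho_2>1$ should be justified by $D_{1a}$ properly containing a cycle together with Lemma~\ref{lem:sub}.
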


Similar to the approach for $\infty$-digraphs, Type $1a$ digraphs with $n$ vertices can be organized based on their spectral radius. Figure~\ref{fig:comparacion1a} demonstrates Type $1a$ digraphs of order $9$, arranged by their spectral radius, which, interestingly, corresponds to the lexicographic order.\\

\begin{figure}[h!]
\centering
\includegraphics[scale=0.075]{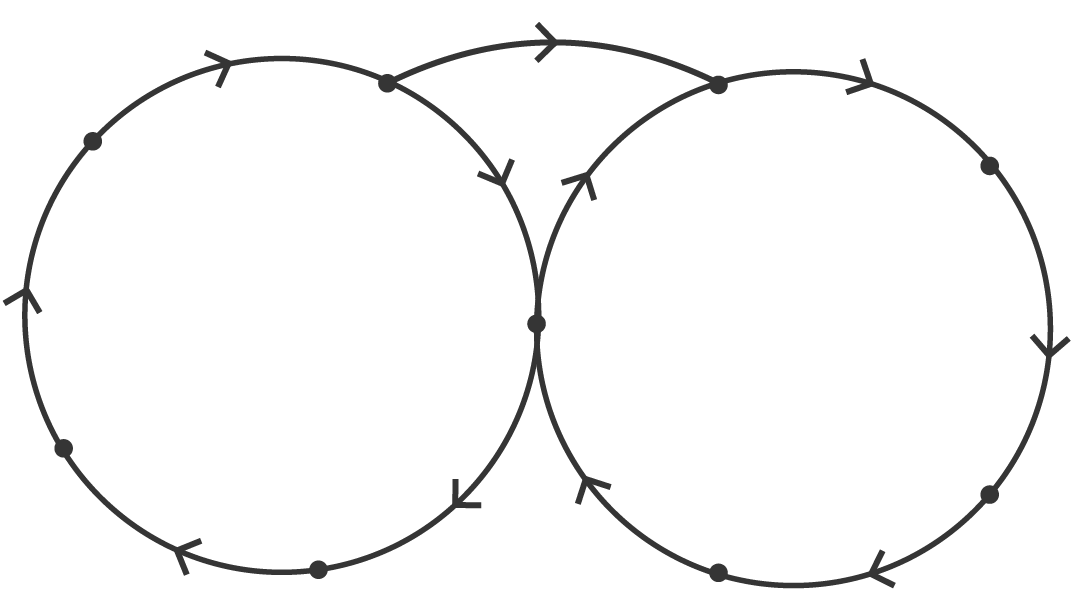}
\includegraphics[scale=0.075]{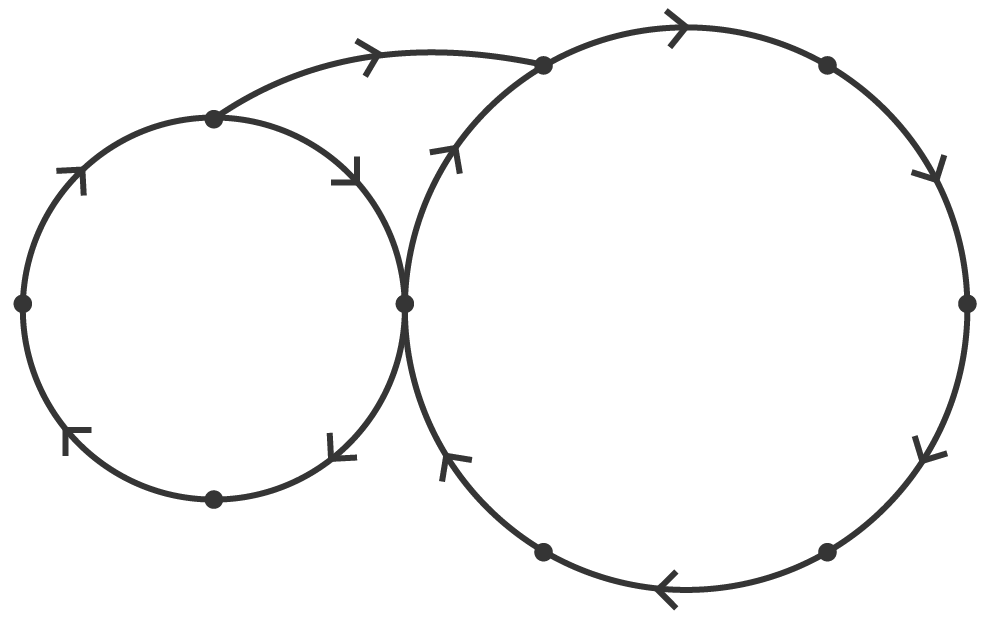}
\includegraphics[scale=0.075]{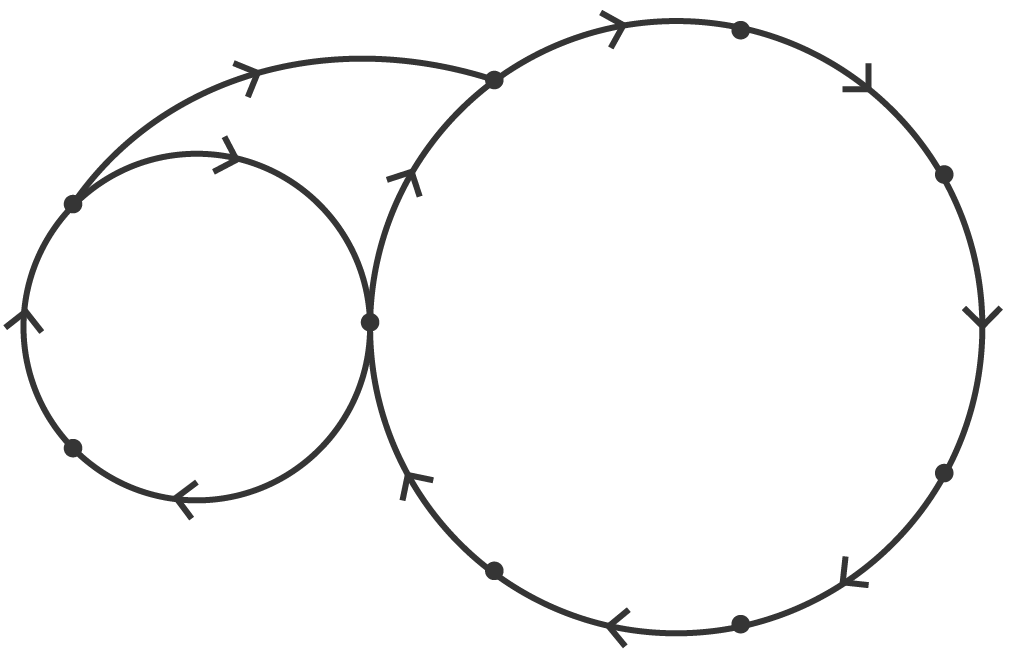}
\includegraphics[scale=0.075]{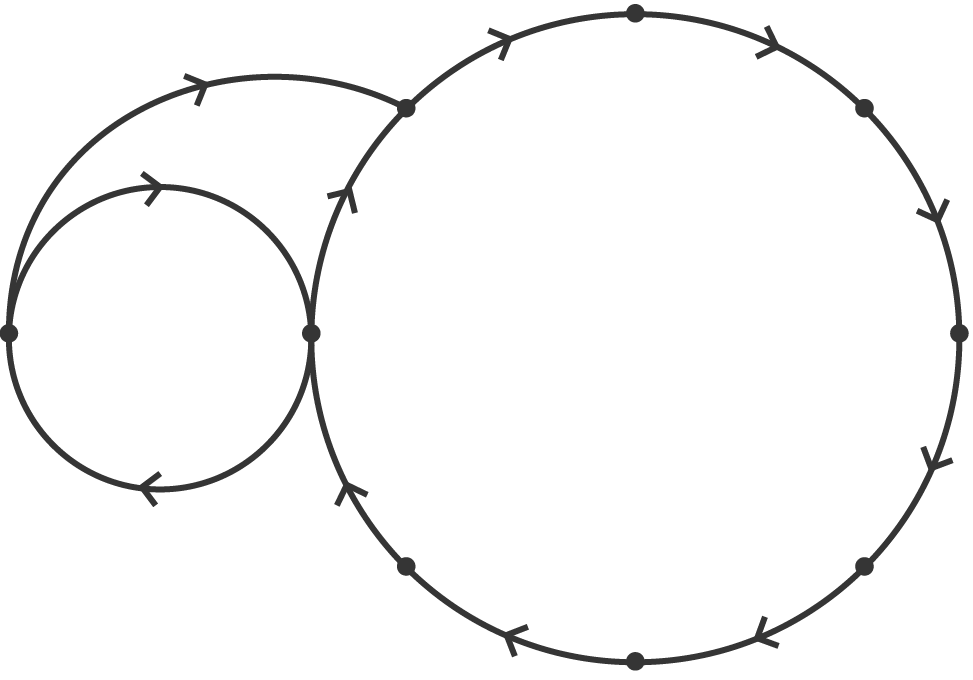}
\caption{$\rho(D_{1a}(5,5)) < \rho(D_{1a}(4,6)) < \rho(D_{1a}(3,7)) < \rho(D_{1a}(2,8))$.}.
\label{fig:comparacion1a}
\end{figure}

Once more, the analysis of the complementarity spectrum of these digraphs simplifies to examining their spectral radii, enabling us to assert the following conclusion.

\begin{cor}
The class of  Type $1a$ digraphs is DCS.
\end{cor}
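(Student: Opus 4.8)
The statement to prove is that the class of Type $1a$ digraphs is DCS, meaning any two non-isomorphic Type $1a$ digraphs of the same order have different complementarity spectra.

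\medskip

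The plan is to reduce the problem entirely to a comparison of spectral radii, exactly as was done for the $\infty$-digraph family. First I would recall that, as established in the preliminaries, every Type $1a$ digraph $D_{1a}(r,s)$ has complementarity spectrum $\{0,1,\rho(D_{1a}(r,s))\}$, so two such digraphs are complementarity cospectral if and only if they have the same spectral radius. Next, observe that a Type $1a$ digraph on $n$ vertices is completely determined (up to isomorphism) by the pair $(r,s)$ with $2 \leq r \leq s$ and $r+s-1 = n$; in particular, fixing $n$ leaves the single free parameter $r$ ranging over $2 \leq r \leq \lceil (n+1)/2 \rceil$, with $s = n+1-r$ determined. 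So it suffices to show that the map $r \mapsto \rho(D_{1a}(r, n+1-r))$ is injective on this range.

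\medskip

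The key step is then the strict monotonicity furnished by the preceding Theorem (and its Corollary), which states that for $2 \leq r \leq s$ with $r+s-1=n$ we have $\rho(D_{1a}(r,s)) > \rho(D_{1a}(r+1,s-1))$, and more generally that $r < r'$ implies $\rho > \rho'$ among Type $1a$ digraphs of order $n$. This immediately gives injectivity of $r \mapsto \rho(D_{1a}(r,n+1-r))$: if $r \neq r'$, say $r < r'$, then $\rho(D_{1a}(r,n+1-r)) > \rho(D_{1a}(r',n+1-r'))$, so the spectral radii differ. Combining this with the reduction in the previous paragraph, two non-isomorphic Type $1a$ digraphs of the same order correspond to distinct values of $r$, hence have distinct spectral radii, hence distinct complementarity spectra. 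Finally, for digraphs of different orders the DCS condition is satisfied vacuously by definition, so the class of Type $1a$ digraphs is DCS.

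\medskip

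I do not expect any real obstacle here, since the heavy lifting — the monotonicity of the spectral radius as the cycle lengths are traded off — is already packaged in the Theorem and Corollary immediately preceding this Corollary. The only point requiring a line of care is the bookkeeping that a Type $1a$ digraph of order $n$ is indeed determined by $r$ alone: one must note that the added arc $(r,2')$ in $D_{1a}(r,s)$ always goes from the shorter cycle $\vec{C}_r$ to the longer cycle $\vec{C}_s$, so that the ordered pair $(r,s)$ with $r \leq s$ is an isomorphism invariant (unlike for Type 1 digraphs without the $1a/1b$ distinction). With that observed, the proof is a two-line deduction from the monotonicity result.
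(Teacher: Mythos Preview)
Your proposal is correct and follows essentially the same approach as the paper: reduce to the spectral radius via $\Pi(D_{1a}(r,s))=\{0,1,\rho(D_{1a}(r,s))\}$, then invoke the monotonicity Corollary (if $r<r'$ then $\rho>\rho'$) to conclude that distinct $(r,s)$ with $r\leq s$ and $r+s-1=n$ yield distinct spectral radii. The only slip is cosmetic---the upper bound on $r$ should be $\lfloor (n+1)/2\rfloor$ rather than $\lceil (n+1)/2\rceil$---but this does not affect the argument.
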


The findings derived for Type $1a$ digraphs are similarly applicable to Type $1b$ digraphs through analogous proofs.\\

The examination of the family $F_\infty$ will be concluded with an analysis of Type $2$ digraphs.

\begin{teo}
Let $r,s$ be integers such that $2\leq r\leq s$, $3\leq s $ and $r+s-1=n$. Then, $\rho(D_{2}(r,s))>\rho(D_{2}(r+1,s-1))$.
\end{teo}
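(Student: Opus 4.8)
The plan is to mimic the argument of Theorem~\ref{teo:inftyDCS} almost verbatim, since the characteristic polynomials of the Type~2 family differ from those of the $\infty$-family only by the additive constant $-x-2$, which cancels when we subtract. Write $p_1(x)=x^n-x^{n-r}-x^{n-s}-x-2$ and $p_2(x)=x^n-x^{n-r-1}-x^{n-s+1}-x-2$ for the characteristic polynomials of $D_2(r,s)$ and $D_2(r+1,s-1)$ respectively (using $p_{D_2}(x)=x^n-x^{s-1}-x^{r-1}-x-2$ and $n-r=s-1$, $n-s=r-1$), and set $\rho_1=\rho(D_2(r,s))$, $\rho_2=\rho(D_2(r+1,s-1))$. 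Then $h(x):=p_2(x)-p_1(x)$ is exactly the same difference as before:
\[h(x)=-x^{n-r-1}+x^{n-r}-x^{n-s+1}+x^{n-s}=x^{n-s}(x-1)(x^{s-r-1}-1).\]
Since $2\le r\le s$, $3\le s$ and $r+s-1=n$ force $s-r-1\ge 0$; one checks that in fact $s-r-1\ge 1$ is needed, and this follows because if $s-r-1=0$ then $r=s-1$, still giving $h\equiv 0$ only in the degenerate case $r=s$ — so I would handle the boundary $s=r+1$ separately or observe, as in Theorem~\ref{teo:inftyDCS}, that the hypotheses guarantee $s-r-1\ge1$ whenever the two digraphs are genuinely different. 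Thus $h(x)>0$ for all $x>1$, and since $\rho_2>1$ (the digraph contains a cycle, so $1\in\Pi(D_2)$, and it is not itself a cycle, so by Lemma~\ref{lem:sub} $\rho_2>1$), we get $0<h(\rho_2)=-p_1(\rho_2)$, i.e. $p_1(\rho_2)<0$.

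It then suffices to show $p_1$ is strictly increasing on $(1,+\infty)$: combined with $p_1(\rho_1)=0$ and $p_1(\rho_2)<0$ this yields $\rho_2<\rho_1$. Here $p_1'(x)=nx^{n-1}-(n-r)x^{n-r-1}-(n-s)x^{n-s-1}-1$, which by Descartes' rule of signs (one sign change in the coefficient sequence $+,-,-,-$) has exactly one positive root. To locate it I would reuse the mean-value/Bolzano sandwich from Theorem~\ref{teo:inftyDCS}: there is $c\in(0,1)$ with $p_1'(c)=\frac{p_1(1)-p_1(0)}{1-0}=\frac{(1-1-1-1-2)-(-2)}{1}=-3<0$, while $p_1'(1)=n-(n-r)-(n-s)-1=r+s-n-1=0$.

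The small wrinkle is that $p_1'(1)=0$ rather than $>0$, so Bolzano on $(c,1)$ does not immediately place a root strictly inside $(c,1)$; instead $1$ itself could be the unique positive root of $p_1'$. I would resolve this by computing $p_1''(1)$ or by evaluating $p_1'$ at a point slightly larger than $1$: since the unique positive root of $p_1'$ lies in $(c,1]$ (because $p_1'(c)<0\le p_1'(1)$ and after the root $p_1'$ stays positive), in either case $p_1'(x)>0$ for all $x>1$, so $p_1$ is strictly increasing on $(1,\infty)$, completing the proof. The main obstacle is precisely this boundary bookkeeping — confirming $p_1'>0$ on $(1,\infty)$ despite $p_1'(1)=0$ — together with making sure the hypotheses genuinely force $s-r-1\ge1$ so that $h$ is not identically zero; both are routine but must be stated cleanly.
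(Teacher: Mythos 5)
Your proof is correct and takes essentially the same route as the paper: the $-x-2$ terms cancel in the difference $h=p_2-p_1$, reducing everything to the computation of Theorem~\ref{teo:inftyDCS}, and the monotonicity of $p_1$ on $(1,\infty)$ follows because $p_1'$ has exactly one positive root by Descartes' rule and $p_1'(1)=0$ locates that root at $1$, which is precisely how the paper resolves the ``wrinkle'' you flag. The mean-value step is superfluous here (and incidentally $p_1(1)-p_1(0)=-2$, not $-3$), but this does not affect the argument.
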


\begin{proof}


The proof follows the same steps as those outlined in Theorem \ref{teo:inftyDCS} until it is required to demonstrate that $p_1$ is strictly increasing over the interval $(1, +\infty)$. Specifically, the derivative of $p_1$, given by $p_1'(x) = nx^{n-1} - (n-r)x^{n-r-1} - (n-s)x^{n-s-1} - 1$, possesses exactly one positive root according to Descartes’ rule of signs. Given that $p_1'(1) = r - n + s - 1 = 0$, it follows that $p_1'(x) > 0$ for all $x$ in the interval $(1, \infty)$. Consequently, $p_1$ is strictly increasing in this domain, thereby completing the proof.

\end{proof}

\begin{cor}
Let  $3\leq n$, $2\leq r\leq s$, $2\leq{r}'\leq{s}'$ be integers, let $\rho$ and ${\rho}'$ spectral radii of $D_{2}(r,s)$ and $D_{2}({r}',{s}')$ both of order $n$. If $r<{r}'$ then $\rho > {\rho}'$.
\end{cor}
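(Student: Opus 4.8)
The final statement to prove is the Corollary that, for Type $2$ digraphs of fixed order $n$, the spectral radius is strictly decreasing in the first parameter $r$: if $r < r'$ then $\rho(D_2(r,s)) > \rho(D_2(r',s'))$.

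The plan is to deduce this directly from the immediately preceding Theorem by a telescoping argument, exactly as the analogous corollaries for $\infty$-digraphs and Type $1a$ digraphs were deduced from their respective monotonicity theorems. First I would observe that since all digraphs in question have the same order $n$, the constraint $r + s - 1 = n$ forces $s = n - r + 1$; thus a Type $2$ digraph of order $n$ is determined (up to the $\infty(r,s)\cong\infty(s,r)$-type symmetry already noted for Type $2$) by the single parameter $r$, which ranges over $2 \le r \le s$, i.e. $2 \le r \le \lceil (n+1)/2 \rceil$. Hence comparing $D_2(r,s)$ with $D_2(r',s')$ where $r < r'$ amounts to comparing two members of this one-parameter family.

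Next I would apply the preceding Theorem repeatedly. That Theorem states $\rho(D_2(r,s)) > \rho(D_2(r+1,s-1))$ whenever $2 \le r \le s$, $3 \le s$, $r+s-1 = n$; in the fixed-order language this says $\rho$ strictly decreases when $r$ is incremented by $1$. Since $r < r'$, I can write a finite chain
\[
\rho(D_2(r,s)) > \rho(D_2(r+1,s-1)) > \cdots > \rho(D_2(r',s')),
\]
where each consecutive inequality is an instance of the Theorem (one needs only to check that the hypotheses $3 \le s$ etc.\ hold at every intermediate step, which they do because the second parameter stays $\ge r' \ge 3$, or more carefully because $r \le r' \le s' \le s$ keeps us inside the admissible range). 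Concatenating the strict inequalities gives $\rho(D_2(r,s)) > \rho(D_2(r',s'))$, which is the claim.

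I do not expect a genuine obstacle here: the corollary is a routine consequence of the monotonicity theorem, and the only points requiring minor care are (i) translating the two-parameter constraint $r+s-1=n$ into the statement that fixed order $n$ makes $r$ the sole free parameter, and (ii) verifying that every intermediate digraph $D_2(r+j, s-j)$ in the telescoping chain satisfies the hypotheses of the Theorem so that each link of the chain is legitimate. Both are immediate from $2 \le r < r' \le s' \le s$ and $3 \le s$. One could alternatively note that $r'>r$ together with $r+s=r'+s'$ forces $s'<s$, so the pair $(r',s')$ is strictly "more unbalanced toward small first coordinate'' — but the telescoping proof is cleaner and mirrors the earlier corollaries, so I would present it that way.
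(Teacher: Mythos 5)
Your telescoping deduction is exactly the intended argument: the paper states this corollary without proof as an immediate consequence of the preceding monotonicity theorem, and iterating that theorem along the chain $(r,s),(r+1,s-1),\dots,(r',s')$ (checking, as you do, that each intermediate pair stays in the admissible range $2\leq r+j\leq s-j$, $3\leq s-j$) is precisely how it follows. Your proof is correct and matches the paper's approach.
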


Similarly, Type $2$ digraphs of order $n$ can be organized based on their spectral radius, as illustrated in Figure~\ref{fig:comparacionTipo2}.


\begin{figure}[h!]
\centering
\includegraphics[scale=0.075]{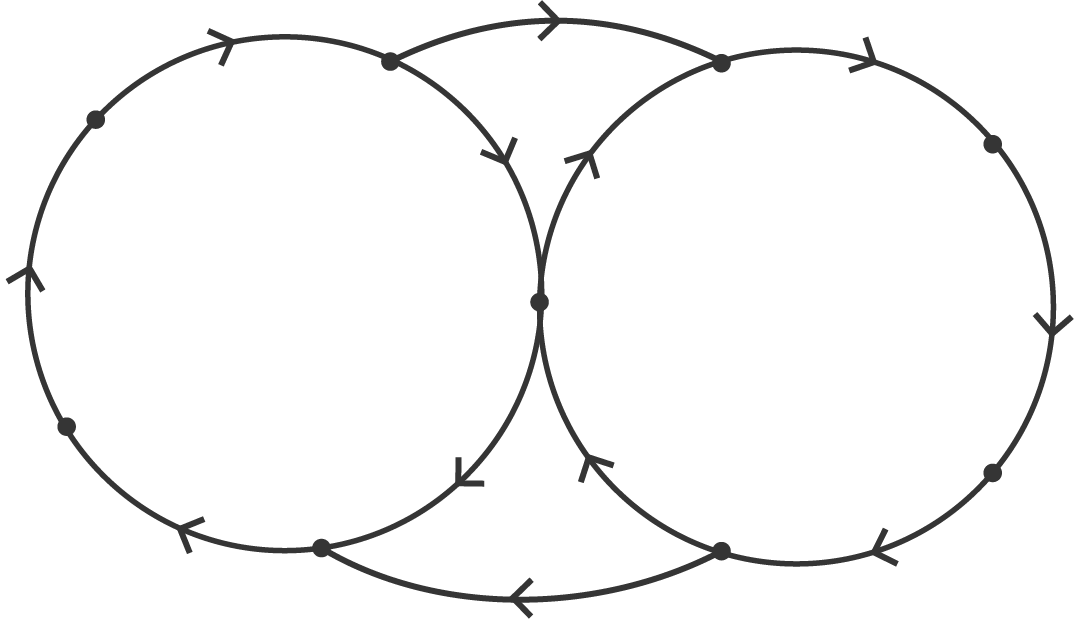}
\hspace{0.05cm}
\includegraphics[scale=0.075]{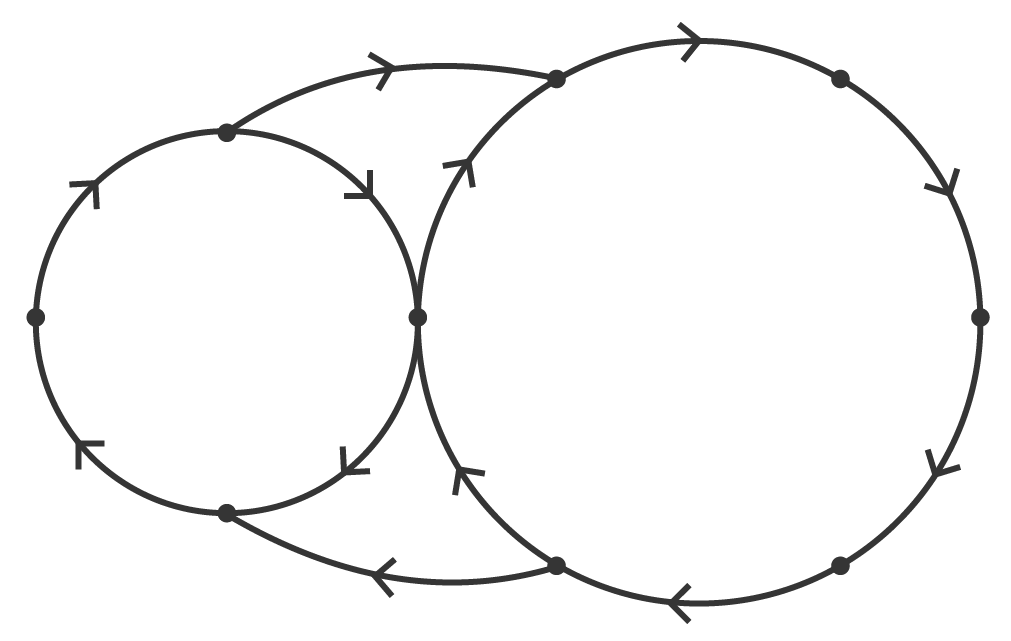}
\hspace{0.05cm}
\includegraphics[scale=0.075]{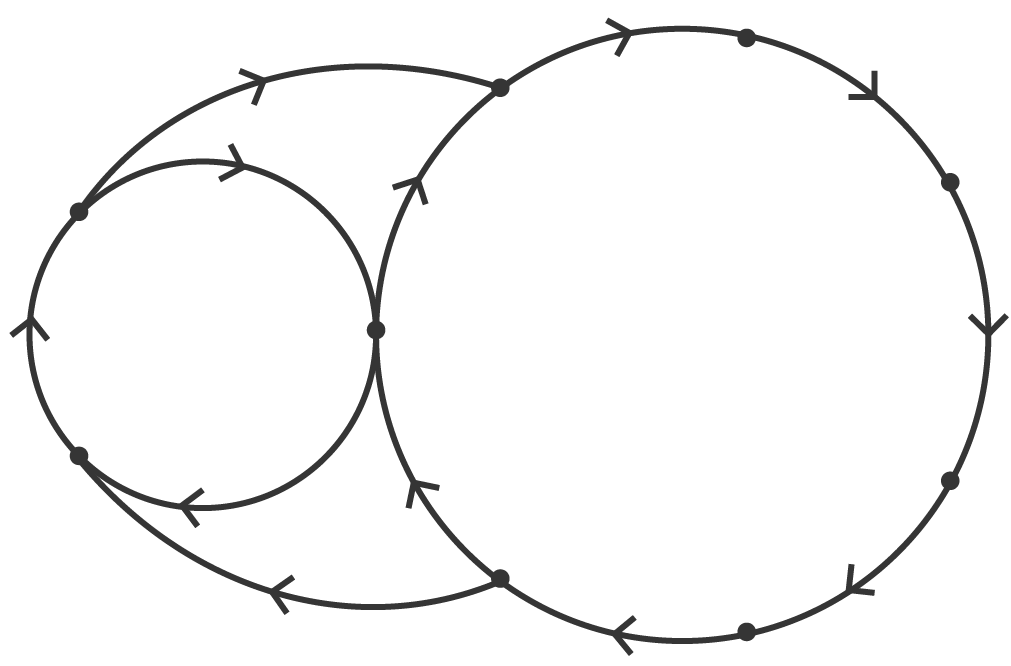}
\hspace{0.05cm}
\includegraphics[scale=0.075]{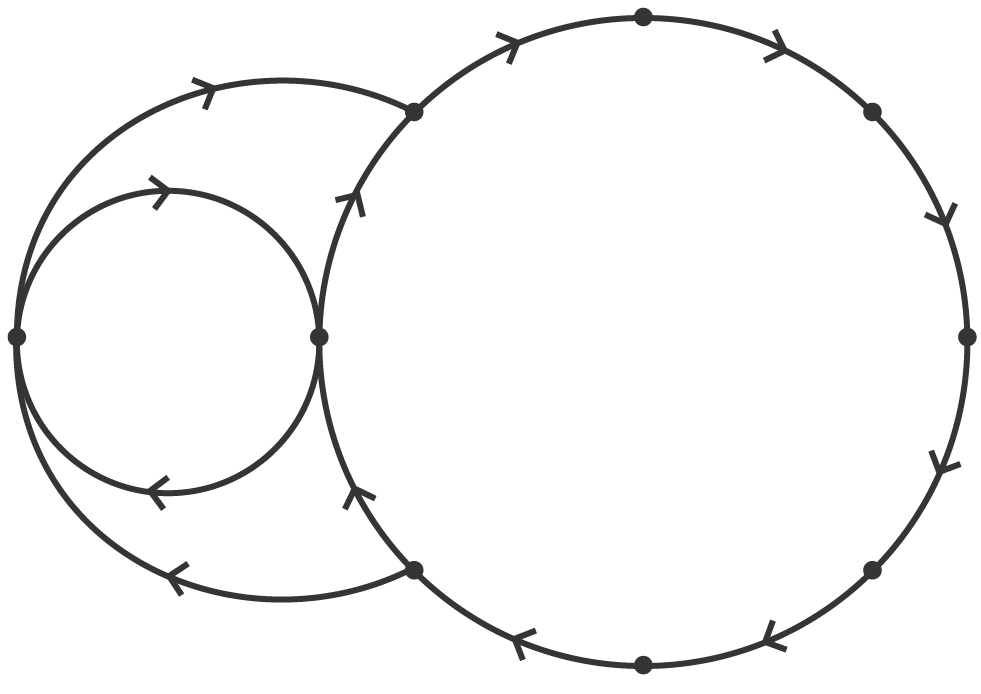}
\caption{$\rho(D_2(5,5))<\rho(D_2(4,6))<\rho(D_2(3,7))<\rho(D_2(2,8))$.}
\label{fig:comparacionTipo2}
\end{figure}

\begin{cor}
The class of Type $2$ digraphs is DCS.
\end{cor}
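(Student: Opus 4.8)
The plan is to mirror exactly the structure already used for the $\infty$-digraph and Type $1a$ families, leveraging the preceding theorem and corollary on Type $2$ digraphs. First I would recall that, by the arguments in the body of this section, the complementarity spectrum of a Type $2$ digraph $D_2(r,s)$ of order $n$ is $\{0,1,\rho(D_2(r,s))\}$, so that two Type $2$ digraphs of the same order are complementarity cospectral if and only if they have the same spectral radius.

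Next I would invoke the corollary immediately preceding this statement: among Type $2$ digraphs of a fixed order $n$, the map $(r,s)\mapsto \rho(D_2(r,s))$ (with the normalization $2\le r\le s$ and $r+s-1=n$) is strictly monotone in $r$, namely $r<r'$ implies $\rho(D_2(r,s))>\rho(D_2(r',s'))$. In particular this map is injective on the set of admissible parameter pairs $(r,s)$ with $2\le r\le s$ and $r+s-1=n$. Since such a pair $(r,s)$ determines the digraph $D_2(r,s)$ up to isomorphism (recall that the two cycles of a Type $2$ digraph are interchangeable, so $D_2(r,s)\cong D_2(s,r)$ and we lose nothing by taking $r\le s$), injectivity of the spectral radius on parameters translates directly into: distinct Type $2$ digraphs of order $n$ have distinct spectral radii, hence distinct complementarity spectra.

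Putting these two observations together: if $H$ and $D$ are Type $2$ digraphs of the same order $n$ with $\Pi(H)=\Pi(D)$, then $\rho(H)=\rho(D)$, so their parameter pairs coincide by the corollary, whence $H\cong D$. This is precisely the assertion that the class of Type $2$ digraphs of order $n$ is DCS, and since $n$ was arbitrary the whole class is DCS.

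There is really no substantial obstacle here; the statement is a bookkeeping consequence of the monotonicity theorem just proved. The only point requiring a word of care is the reduction from parameters to isomorphism classes: one must note that the constraints $2\le r\le s$ and $r+s-1=n$ give a bijection between admissible pairs and isomorphism types of order-$n$ Type $2$ digraphs, so that the strict inequality of the corollary genuinely separates non-isomorphic members rather than merely separating parameter labels. Once that is observed, the corollary follows in one line, exactly as the analogous corollaries for $\infty$-digraphs and Type $1a$ digraphs did.
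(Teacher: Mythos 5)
Your proposal is correct and follows exactly the route the paper intends: reduce complementarity cospectrality to equality of spectral radii via $\Pi(D_2)=\{0,1,\rho(D_2)\}$, then use the strict monotonicity of $\rho(D_2(r,s))$ in $r$ (the preceding theorem and corollary) to conclude that distinct parameter pairs, hence non-isomorphic Type $2$ digraphs of the same order, have distinct spectra. The paper treats this as immediate and gives no further argument, so your write-up, including the remark about the bijection between admissible pairs $(r,s)$ and isomorphism classes, is if anything slightly more careful than the original.
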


Therefore, the family \(F_{\infty}\) is categorized into four distinct sub-families that are distinguishable by their complementarity spectra (DCS). These sub-families include: \(\infty\)-digraphs, and the Type \(1a\), Type \(1b\), and Type \(2\) digraphs.

\section{Classes of digraphs in $F_\theta$ determined by their complementarity spectrum}\label{sec_Ftheta}



In this section, we establish that the class of $\theta$-digraphs is DCS by employing a strategy distinct from the one used in the previous section. To lay the groundwork, let us revisit some definitions and key results.\\ 

A number $\alpha \in \mathbb{C}$ is termed \textit{algebraic} if it is a root of a monic polynomial $f \in \mathbb{Q}[x]$, i.e., there exists a monic polynomial $f \in \mathbb{Q}[x]$ such that $f(\alpha) = 0$.\\

It is important to note that for any algebraic number $\alpha \in \mathbb{C}$, there exists a unique monic irreducible polynomial $\mathrm{Irr}_{\mathbb{Q}}(\alpha) \in \mathbb{Q}[x]$ that has $\alpha$ as a root.\\




Building upon the approach of the previous section, our objective is to select the characteristic polynomials of two distinct $\theta$-digraphs and demonstrate that their respective spectral radii differ. Note that the polynomial $P_\theta(x) = x^n - x^b - x^a$ can be factorized as $P_\theta(x) = x^a(x^{n-a} - x^{b-a} - 1)$. Given this factorization, the following polynomial, defined by specific values of $k$ and $m$, will be instrumental in our analysis.\\



\begin{definition}
Given two integers $k$ and $m$ such that $k > m > 0$, we define the polynomial $f_{k, m}(x)$ as $f_{k, m}(x) = x^k - x^m - 1$.
\end{definition}


\begin{definition}
Let the polynomial $h$ be defined by $h(x) = x^2 - x + 1$.
\end{definition}

{The following result, which is a particular case of Theorem 3 in \cite{ljun}, states conditions for $f_{n,m}$ to be irreducible, and will be useful in what follows.}

\begin{teo}\label{thm.paper}
Given the integers $k>m>0$, let $d = \gcd(k,m)$ and $k_1 = \frac{k}{d}$, $m_1 = \frac{m}{d}$. Then $f_{k,m}$  is irreducible except when $k_1+m_1\equiv 0\pmod{3}$ and $m_1\equiv 0\pmod{2}$ in which case we have the following decomposition as product of irreducible polynomials: $f_{k, m}(x) = h(x^d)\tilde{f}_{k, m}(x)$, where
$\tilde{f}_{k, m}$ is irreducible. Moreover $f_{k, m}(x) = f_{k_1, m_1}(x^d)$ {and $\tilde{f}_{k, m}(x) = \tilde{f}_{k_1, m_1}(x^d)$}.
\end{teo}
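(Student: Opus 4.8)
The statement to be proved is Theorem~\ref{thm.paper}, the irreducibility criterion for $f_{k,m}(x) = x^k - x^m - 1$, which the excerpt attributes to a particular case of Theorem~3 in \cite{ljun}. Since this is cited as a known result, the natural plan is not to reprove Ljunggren's theorem from scratch but to reduce the stated form to the reference's form and then verify the three auxiliary claims: (i) the factor $h(x^d)$ genuinely divides $f_{k,m}$ in the exceptional case, (ii) the cofactor $\tilde f_{k,m}$ is irreducible, and (iii) the ``substitution'' identities $f_{k,m}(x)=f_{k_1,m_1}(x^d)$ and $\tilde f_{k,m}(x)=\tilde f_{k_1,m_1}(x^d)$.

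First I would handle the substitution identity $f_{k,m}(x) = f_{k_1,m_1}(x^d)$, which is immediate: since $k = dk_1$ and $m = dm_1$, we have $f_{k_1,m_1}(x^d) = (x^d)^{k_1} - (x^d)^{m_1} - 1 = x^{dk_1} - x^{dm_1} - 1 = x^k - x^m - 1$. This reduces the problem to the coprime case $\gcd(k_1,m_1)=1$, provided one knows how irreducibility behaves under $x \mapsto x^d$. The key classical input here is the criterion (essentially due to Capelli) describing when $g(x^d)$ is irreducible given that $g$ is irreducible: for the trinomials at hand one checks that the only obstruction is the splitting captured by the $h(x^d)$ factor, so that $f_{k_1,m_1}(x^d)$ is irreducible exactly when $f_{k_1,m_1}$ is, \emph{and} $x^d$ does not interact with the (cyclotomic-type) factor $h$. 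Thus the heart of the matter is the coprime case.

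Next, for the coprime case I would invoke Ljunggren's theorem directly: $f_{k_1,m_1}(x) = x^{k_1} - x^{m_1} - 1$ is irreducible over $\Qq$ unless $k_1 + m_1 \equiv 0 \pmod 3$ and $m_1 \equiv 0 \pmod 2$ (equivalently $k_1$ odd, $m_1$ even, and $3 \mid k_1+m_1$), in which case $h(x) = x^2 - x + 1$ divides it and the complementary factor is irreducible. To make the write-up self-contained I would at least verify the divisibility claim: if $\omega = e^{i\pi/3}$ is a primitive sixth root of unity (a root of $h$), then under the stated congruences one computes $\omega^{k_1} - \omega^{m_1} - 1 = 0$ using $\omega^6 = 1$, $\omega^3 = -1$, and $\omega^2 = \omega - 1$; checking both roots $\omega, \bar\omega$ of $h$ shows $h \mid f_{k_1,m_1}$, and since $h$ is irreducible with simple roots, $f_{k_1,m_1}/h$ is a genuine polynomial in $\Qq[x]$. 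That the quotient $\tilde f_{k_1,m_1}$ is itself irreducible is the substantive part of Ljunggren's result and I would cite it. Pulling back through $x \mapsto x^d$ then yields $f_{k,m}(x) = h(x^d)\tilde f_{k,m}(x)$ with $\tilde f_{k,m}(x) = \tilde f_{k_1,m_1}(x^d)$, and one must additionally argue that $\tilde f_{k_1,m_1}(x^d)$ stays irreducible — again the Capelli-type criterion, checking that $\tilde f_{k_1,m_1}$ has no root that is a perfect $p$-th power for $p \mid d$ and that $d$ is not divisible by $4$ if $-1$ is a value complication arises, but for these trinomials the relevant conditions hold.

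The main obstacle I anticipate is precisely the behavior of irreducibility under the substitution $x \mapsto x^d$: asserting ``$f_{k,m}$ irreducible $\iff$ $f_{k_1,m_1}$ irreducible'' is \emph{not} automatic and requires the Capelli/Vahlen–Capelli criterion together with a check that the roots of $f_{k_1,m_1}$ (and of $\tilde f_{k_1,m_1}$) are not $p$-th powers in the splitting field for primes $p\mid d$. Since the spectral radius $\rho(\theta)$ is a real root of a trinomial $x^{k_1}-x^{m_1}-1$ with $k_1 > m_1 > 0$, one has good control: such a root is real, positive, greater than $1$, and algebraic of the expected degree, and a short argument rules out it being a proper power. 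I would isolate this as a lemma, prove it for the specific trinomial family, and thereby keep the rest of the argument a clean deduction from \cite{ljun}.
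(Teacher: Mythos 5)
The paper offers no proof of this statement at all: it is quoted, as a special case, directly from Theorem~3 of \cite{ljun}, and Ljunggren's theorem is already formulated for general $k,m$ with the $\gcd$ reduction built in --- that is precisely why the identities $f_{k,m}(x)=f_{k_1,m_1}(x^d)$ and $\tilde f_{k,m}(x)=\tilde f_{k_1,m_1}(x^d)$ appear as part of the cited result rather than as something to be re-derived. Your plan --- invoke Ljunggren only in the coprime case and then transport irreducibility through the substitution $x\mapsto x^d$ --- therefore takes on extra work, and that extra work contains the one genuine gap in your sketch: the Capelli/Vahlen--Capelli step. To conclude that $g(x^d)$ is irreducible from the irreducibility of $g$ you must show that a root $\alpha$ of $g$ is not a $p$-th power in the number field $\mathbb{Q}(\alpha)$ for every prime $p\mid d$ (and not in $-4\,\mathbb{Q}(\alpha)^4$ when $4\mid d$). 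Your proposed ``short argument'' about the real positive root $\rho>1$ not being a proper power does not meet this: the obstruction lives in $\mathbb{Q}(\alpha)$, not on the real line, so one would have to rule out $\alpha=\beta^p$ for non-real $\beta\in\mathbb{Q}(\alpha)$ as well, and this is essentially the hard content of the general-$d$ case of Ljunggren's theorem --- asserting that ``for these trinomials the relevant conditions hold'' is circular. The clean route is simply to cite Theorem~3 of \cite{ljun} in its general form, as the paper does. Your two elementary verifications --- the substitution identity $f_{k_1,m_1}(x^d)=f_{k,m}(x)$ and the divisibility $h\mid f_{k_1,m_1}$ checked at a primitive sixth root of unity under the stated congruences --- are correct and harmless, but they do not close the gap above.
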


\begin{lema}\label{lema1}
Let $k>m>0$, and $k'>m'>0$ integers.  {If $f_{k,m}$ is reducible and 
$\tilde{f}_{k, m} = f_{k', m'}$
then  $k=5d$, $m=4d$, $k'=3d$ and $m'=d$ where  $d = \gcd(k,m)$.} 
\end{lema}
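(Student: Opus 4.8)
The plan is to use Theorem~\ref{thm.paper} to pin down which reducible $f_{k,m}$ can possibly satisfy $\tilde f_{k,m} = f_{k',m'}$, and then compare degrees and the number of terms. First I would invoke Theorem~\ref{thm.paper}: if $f_{k,m}$ is reducible, then writing $d=\gcd(k,m)$, $k_1=k/d$, $m_1=m/d$, we are in the exceptional case $k_1+m_1\equiv 0\pmod 3$, $m_1\equiv 0\pmod 2$, and $f_{k,m}(x)=h(x^d)\tilde f_{k,m}(x)$ with $\deg h(x^d)=2d$. Hence $\deg\tilde f_{k,m} = k - 2d$. On the other hand $f_{k',m'}$ has degree $k'$, so the hypothesis $\tilde f_{k,m}=f_{k',m'}$ forces $k' = k - 2d = d(k_1-2)$.

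Next I would extract structural information from the identity $f_{k,m}(x) = h(x^d)\,f_{k',m'}(x)$, i.e. $x^k - x^m - 1 = (x^{2d}-x^d+1)(x^{k'}-x^{m'}-1)$. The key observation is that the right-hand side, when expanded, is a polynomial with very few nonzero terms only in special situations; the left-hand side has exactly three terms with constant term $-1$. Expanding, the constant term matches automatically. I would then look at the structure modulo $x^d$: since $f_{k,m}(x) = f_{k_1,m_1}(x^d)$ by the ``Moreover'' clause of Theorem~\ref{thm.paper}, and likewise one expects $\tilde f_{k,m}(x) = \tilde f_{k_1,m_1}(x^d)$, the cleanest route is to reduce everything to the primitive case: the identity $\tilde f_{k,m}=f_{k',m'}$ together with $\tilde f_{k,m}(x)=\tilde f_{k_1,m_1}(x^d)$ shows $d\mid k'$ and $d\mid m'$ — or more carefully, that $f_{k',m'}(x)$ is itself a polynomial in $x^d$, forcing $d\mid m'$ — so $k'=dk_1'$, $m'=dm_1'$ and $\tilde f_{k_1,m_1}=f_{k_1',m_1'}$ with $\gcd(k_1,m_1)=1$. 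So it suffices to prove the claim in the coprime case: if $\gcd(k_1,m_1)=1$, $f_{k_1,m_1}$ reducible, and $\tilde f_{k_1,m_1}=f_{k_1',m_1'}$, then $k_1=5$, $m_1=4$, $k_1'=3$, $m_1'=1$; the general statement then follows by the substitution $x\mapsto x^d$.

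In the coprime case the identity becomes $x^{k_1}-x^{m_1}-1 = (x^2-x+1)(x^{k_1'}-x^{m_1'}-1)$ with $k_1' = k_1-2$. I would expand the right side to $x^{k_1} - x^{m_1'+2} + x^{m_1'+1} - x^{m_1'} - x^{2} + x - 1$ (taking $k_1'=k_1-2$ so the leading terms are $x^{k_1}$, and being careful about which exponents may collide) and match it term-by-term against $x^{k_1}-x^{m_1}-1$. Since the left side has only the three monomials $x^{k_1}, x^{m_1}, 1$, all the remaining monomials on the right must cancel in pairs. A short case analysis on the possible coincidences among the exponents $\{k_1',\, m_1'+2,\, m_1'+1,\, m_1',\, 2,\, 1,\, 0\}$ — using $k_1>m_1>0$, $k_1'>m_1'>0$, and $k_1=k_1'+2$ — leaves essentially one consistent solution, namely $m_1'=1$, $k_1'=3$, $k_1=5$, $m_1=4$; one checks indeed $(x^2-x+1)(x^3-x-1) = x^5-x^4-1$. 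Finally I would confirm this solution is consistent with the congruence conditions of Theorem~\ref{thm.paper} ($k_1+m_1=9\equiv0\pmod3$, $m_1=4\equiv0\pmod2$), and undo the substitution to recover $k=5d$, $m=4d$, $k'=3d$, $m'=d$.

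The main obstacle I anticipate is the exponent-matching case analysis in the coprime case: one must carefully enumerate how the six ``extra'' monomials $-x^{m_1'+2}, x^{m_1'+1}, -x^{m_1'}, -x^{2}, x, -1$ (modified when exponents collide, e.g. when $m_1'=1$ several of these merge) can cancel against each other and against $-x^{m_1}$, and rule out all configurations except the claimed one — in particular handling small values of $m_1'$ (where $m_1'+1$ or $m_1'$ may equal $2$ or $1$) separately. A secondary technical point is justifying rigorously that $\tilde f_{k,m}(x)=\tilde f_{k_1,m_1}(x^d)$ (so that the reduction to the coprime case is legitimate); this is the bracketed ``Moreover'' assertion in Theorem~\ref{thm.paper}, which I would simply cite.
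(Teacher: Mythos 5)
Your overall strategy is the same as the paper's: use Theorem~\ref{thm.paper} to write $f_{k,m}(x)=h(x^d)f_{k',m'}(x)$, expand the product, and force all monomials except three to cancel. Your extra preliminary step of reducing to the coprime case is legitimate (if $\tilde{f}_{k,m}=f_{k',m'}$ and $\tilde{f}_{k,m}(x)=\tilde{f}_{k_1,m_1}(x^d)$, then $x^{k'}-x^{m'}-1$ is a polynomial in $x^d$, so $d\mid k'$ and $d\mid m'$), though the paper simply carries the $d$ through the exponents and avoids this detour.

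There is, however, a concrete error that breaks the key step. The correct expansion is
\[
(x^2-x+1)(x^{k_1'}-x^{m_1'}-1)=x^{k_1'+2}-x^{k_1'+1}+x^{k_1'}-x^{m_1'+2}+x^{m_1'+1}-x^{m_1'}-x^2+x-1,
\]
whereas your expansion (and your list of ``six extra monomials'', and your exponent set for the case analysis) omits the two terms $-x^{k_1'+1}$ and $+x^{k_1'}$ coming from multiplying $x^{k_1'}$ by $-x$ and by $+1$. The omitted term $-x^{k_1'+1}$ is precisely the one on which the whole argument turns: it is a negative monomial that cannot cancel against any of the positive terms $x^{k_1'},x^{m_1'+1},x$ (all of strictly smaller exponent), so it must be the surviving monomial $-x^{m_1}$, which gives $m_1=k_1'+1$ (i.e.\ $m=k'+d$); only then does the remaining cancellation force $m_1'+1=2$ and $k_1'=3$, hence $(k_1,m_1)=(5,4)$. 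With your seven-term expansion the matching would instead yield $m_1=m_1'+2$ and leave $k_1$ completely undetermined, so the case analysis as planned cannot reach the stated conclusion. Restoring the two missing monomials and running the cancellation argument exactly as you describe recovers the paper's proof.
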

\begin{proof} Consider the polynomial
\begin{align*}
f_{k, m}(x) &= h(x^d)\tilde{f}_{k, m}(x) = h(x^d)f_{k', m'}(x) = (x^{2d}-x^d+1)(x^{k'}-x^{m'}-1)\\
&=x^{k'+2d}-x^{k'+d}-1 +(x^{m'+d}+x^{k'}+x^d-x^{m'+2d}-x^{m'}-x^{2d}).
\end{align*}
The term $x^{k' + 2d}$ emerges as the highest degree term in the expanded polynomial, implying $k' + 2d = k$.\\ 

Given that $f_{k,m}$ has exactly three monomials and $-x^{k'+d}$ cannot be written off with $x^{m'+d},x^{k'}$ nor $x^d$ because $k'+d > m'+d, k', d$. Then $k'+d=m$ and $x^{m'+d}+x^{k'}+x^d-x^{m'+2d}-x^{m'}-x^{2d}=0$. Also,  $m'+d \neq m'+2d, m'$ allowing to conclude that $m'+d = 2d$. Finally $m'=d$, $k = 5d$ and $m = 4d$.
\end{proof}

\begin{lema}\label{lema2}
If $\gcd(k, m) = 1$ and $f_{k, m}$ is reducible then 
{$\tilde{f}_{k, m}(x) \equiv -x-1 \pmod{(x^2)}$.}
\end{lema}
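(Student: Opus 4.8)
The statement to prove is Lemma \ref{lema2}: if $\gcd(k,m)=1$ and $f_{k,m}$ is reducible, then $\tilde{f}_{k,m}(x) \equiv -x-1 \pmod{x^2}$; equivalently, $\tilde{f}_{k,m}$ has constant term $-1$ and the coefficient of $x$ equal to $-1$. The plan is to use the factorization supplied by Theorem \ref{thm.paper}: since $d = \gcd(k,m) = 1$, reducibility of $f_{k,m}$ means $f_{k,m}(x) = h(x)\tilde{f}_{k,m}(x) = (x^2 - x + 1)\tilde{f}_{k,m}(x)$. Write $\tilde{f}_{k,m}(x) = \sum_{i=0}^{k-2} c_i x^i$ (it is monic of degree $k-2$, and $k \geq 3$ here because $k_1 + m_1 \equiv 0 \pmod 3$ with $m_1$ even forces $m \geq 2$, $k \geq 3$ — in fact $k\geq 5$, but $k\geq 3$ is all we need to talk about the $x^0$ and $x^1$ coefficients).

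First I would read off the low-degree coefficients of the product $(x^2 - x + 1)\tilde{f}_{k,m}(x)$ and match them against $f_{k,m}(x) = x^k - x^m - 1$. The constant term of the product is $1 \cdot c_0$, which must equal the constant term of $f_{k,m}$, namely $-1$ (note $m > 0$ so $f_{k,m}$ has constant term $-1$); hence $c_0 = -1$. Next, the coefficient of $x^1$ in the product is $1 \cdot c_1 + (-1)\cdot c_0 = c_1 - c_0 = c_1 + 1$. Since $\gcd(k,m)=1$ and $m_1 = m$ is even, $m \geq 2$, so the coefficient of $x^1$ in $f_{k,m}$ is $0$ (the monomials of $f_{k,m}$ sit at degrees $0$, $m$, $k$ with $m \geq 2$). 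Therefore $c_1 + 1 = 0$, i.e.\ $c_1 = -1$. This gives $\tilde{f}_{k,m}(x) = -1 - x + (\text{higher order}) \equiv -x - 1 \pmod{x^2}$, which is exactly the claim.

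The only genuinely substantive point — and the one I'd be most careful about — is justifying that $m \geq 2$ so that the $x^1$-coefficient of $f_{k,m}$ really is zero; if $m$ were $1$ the argument for $c_1$ would change. But reducibility under the hypothesis $d=1$ forces, by Theorem \ref{thm.paper}, that $k_1 + m_1 = k + m \equiv 0 \pmod 3$ and $m_1 = m \equiv 0 \pmod 2$, so $m$ is a positive even integer and hence $m \geq 2$; this also secures $k \geq 3$ so that $\deg \tilde{f}_{k,m} = k - 2 \geq 1$ and the coefficient $c_1$ is meaningful. Everything else is a one-line coefficient comparison, so there is no real obstacle; the proof is short. (One could alternatively phrase the whole thing by reducing the identity $f_{k,m}(x) = h(x)\tilde{f}_{k,m}(x)$ modulo $x^2$: modulo $x^2$ we have $f_{k,m}(x) \equiv -1$ and $h(x) \equiv 1 - x$, so $-1 \equiv (1-x)\tilde{f}_{k,m}(x) \pmod{x^2}$, and since $1 - x$ is a unit in $\mathbb{Q}[x]/(x^2)$ with inverse $1 + x$, we get $\tilde{f}_{k,m}(x) \equiv -(1+x) = -x - 1 \pmod{x^2}$, again using $m \geq 2$ to know $f_{k,m} \equiv -1$.)
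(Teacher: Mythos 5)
Your proposal is correct and follows essentially the same route as the paper: both reduce the factorization $f_{k,m}=h\cdot\tilde f_{k,m}$ modulo $x^2$, using that reducibility forces $m$ even, hence $m\geq 2$ and $f_{k,m}\equiv -1\pmod{x^2}$, and then solve for the two low-order coefficients of $\tilde f_{k,m}$. Your explicit justification of $m\geq 2$ matches the paper's (brief) remark, so there is nothing to add.
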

\begin{proof}
By Theorem \ref{thm.paper} if $f_{k,m}$ is reducible then $m\geq 2$ because $m$ is even. Also by Theorem  \ref{thm.paper}, $f_{k,m}(x) = h(x)\tilde{f}_{k,m}(x)$ because $\gcd(k, m) = 1$, then $f_{k,m}(x)
\equiv -1\pmod{(x^2)}$, and $f_{k, m}(x) \equiv (-x+1)(ax+b)\pmod{(x^2)}\equiv (a-b)x+b\pmod{(x^2)}$.
Hence $a = b = -1$.
\end{proof}\\

The previous lemma can be generalized for any $k,m$ due to the identity $\tilde{f}_{k, m}(x) =\tilde{f}_{k_1,m_1}(x^d)$. 
\begin{obs}
    \label{obs:Lema2Generalizado} If $f_{k, m}$ is reducible then $\tilde{f}_{k, m}(x) 
\equiv -x^d-1  \pmod{(x^{2d})}$.
\end{obs}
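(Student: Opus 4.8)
The plan is to derive Observation~\ref{obs:Lema2Generalizado} from Lemma~\ref{lema2} by the substitution identity $\tilde{f}_{k,m}(x) = \tilde{f}_{k_1,m_1}(x^d)$ recorded in Theorem~\ref{thm.paper}, where $d = \gcd(k,m)$, $k_1 = k/d$, $m_1 = m/d$. First I would observe that since $f_{k,m}$ is reducible, Theorem~\ref{thm.paper} forces $k_1 + m_1 \equiv 0 \pmod 3$ and $m_1 \equiv 0 \pmod 2$; in particular $f_{k_1,m_1}$ is also reducible (its decomposition is $h(x)\tilde{f}_{k_1,m_1}(x)$), and $\gcd(k_1,m_1)=1$ by construction. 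Hence Lemma~\ref{lema2} applies to the pair $(k_1,m_1)$ and gives $\tilde{f}_{k_1,m_1}(x) \equiv -x - 1 \pmod{(x^2)}$.

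Next I would substitute $x \mapsto x^d$ into this congruence. The point is that a congruence modulo $(x^2)$ in the variable $x$ becomes, under $x \mapsto x^d$, a congruence modulo $(x^{2d})$ in the new variable: if $\tilde{f}_{k_1,m_1}(x) = -x - 1 + x^2 g(x)$ for some $g \in \mathbb{Q}[x]$, then $\tilde{f}_{k_1,m_1}(x^d) = -x^d - 1 + x^{2d} g(x^d)$, so $\tilde{f}_{k_1,m_1}(x^d) \equiv -x^d - 1 \pmod{(x^{2d})}$. Combining with $\tilde{f}_{k,m}(x) = \tilde{f}_{k_1,m_1}(x^d)$ yields exactly $\tilde{f}_{k,m}(x) \equiv -x^d - 1 \pmod{(x^{2d})}$, which is the claim.

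There is essentially no obstacle here: the only thing to be careful about is confirming that the hypotheses of Lemma~\ref{lema2} genuinely transfer to $(k_1,m_1)$, namely that $f_{k_1,m_1}$ is reducible whenever $f_{k,m}$ is. This is immediate from the ``Moreover'' clause of Theorem~\ref{thm.paper}: $f_{k,m}(x) = f_{k_1,m_1}(x^d)$ and the decomposition $f_{k,m}(x) = h(x^d)\tilde{f}_{k,m}(x) = h(x^d)\tilde{f}_{k_1,m_1}(x^d)$ shows $f_{k_1,m_1}(x) = h(x)\tilde{f}_{k_1,m_1}(x)$, a nontrivial factorization. So the argument is a one-line substitution once the bookkeeping of indices is set up, and the proof can be stated in two or three sentences.
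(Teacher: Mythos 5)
Your proposal is correct and follows exactly the route the paper takes: the paper justifies the observation as an immediate consequence of Lemma~\ref{lema2} via the identity $\tilde{f}_{k,m}(x)=\tilde{f}_{k_1,m_1}(x^d)$, which is precisely your substitution argument. Your write-up is in fact slightly more careful than the paper's, since you explicitly verify that reducibility of $f_{k,m}$ transfers to the coprime pair $(k_1,m_1)$ so that Lemma~\ref{lema2} applies.
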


\begin{lema}\label{lema3}
Given the integers $k>m>0$, $k'>m'>0$, such that $f_{k, m}$, $f_{k', m'}$ are reducible. If $\tilde{f}_{k, m} = \tilde{f}_{k', m'}$ then $k = k'$, $m = m'$.
\end{lema}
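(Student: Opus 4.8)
The strategy is to reduce to the coprime case and then compare low-degree coefficients, mimicking the proof of Lemma \ref{lema2}. First I would write $d=\gcd(k,m)$, $k=dk_1$, $m=dm_1$ with $\gcd(k_1,m_1)=1$, and similarly $d'=\gcd(k',m')$, $k'=d'k_1'$, $m'=d'm_1'$. Using the identity $\tilde f_{k,m}(x)=\tilde f_{k_1,m_1}(x^d)$ from Theorem \ref{thm.paper} (and likewise for the primed data), the hypothesis $\tilde f_{k,m}=\tilde f_{k',m'}$ becomes $\tilde f_{k_1,m_1}(x^d)=\tilde f_{k_1',m_1'}(x^{d'})$.

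The first key step is to show $d=d'$. For this I would invoke Observation \ref{obs:Lema2Generalizado}: since $f_{k,m}$ is reducible, $\tilde f_{k,m}(x)\equiv -x^d-1\pmod{x^{2d}}$, so the lowest-degree monomial of $\tilde f_{k,m}$ above the constant term is $-x^d$; reading the same lowest monomial off $\tilde f_{k',m'}$ gives $-x^{d'}$, hence $d=d'$. With $d=d'$ in hand, the equality $\tilde f_{k_1,m_1}(x^d)=\tilde f_{k_1',m_1'}(x^d)$ forces $\tilde f_{k_1,m_1}=\tilde f_{k_1',m_1'}$ (substitute $y=x^d$, or compare coefficients degree by degree). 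So it remains to treat the coprime case: if $\gcd(k_1,m_1)=\gcd(k_1',m_1')=1$, both $f_{k_1,m_1}$ and $f_{k_1',m_1'}$ are reducible, and $\tilde f_{k_1,m_1}=\tilde f_{k_1',m_1'}$, I must conclude $k_1=k_1'$ and $m_1=m_1'$.

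In the coprime reducible case, Theorem \ref{thm.paper} gives $f_{k_1,m_1}(x)=h(x)\tilde f_{k_1,m_1}(x)$ with $k_1+m_1\equiv0\pmod3$, $m_1$ even; the same holds for the primed one. Since $h$ has degree $2$, we get $\deg\tilde f_{k_1,m_1}=k_1-2$ and $\deg\tilde f_{k_1',m_1'}=k_1'-2$, so the degree equality already yields $k_1=k_1'$. To pin down $m_1$, multiply back by $h$: $f_{k_1,m_1}=h\cdot\tilde f_{k_1,m_1}=h\cdot\tilde f_{k_1',m_1'}=f_{k_1',m_1'}$, i.e. $x^{k_1}-x^{m_1}-1=x^{k_1'}-x^{m_1'}-1$; since $k_1=k_1'$ this gives $x^{m_1}=x^{m_1'}$, hence $m_1=m_1'$. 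Therefore $k=dk_1=d'k_1'=k'$ and $m=dm_1=d'm_1'=m'$, as claimed.

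**Main obstacle.** The only subtle point is the step $d=d'$: one must be careful that Observation \ref{obs:Lema2Generalizado} genuinely identifies $d$ as an \emph{intrinsic} invariant of the polynomial $\tilde f_{k,m}$ (namely, $-d$ is the largest integer such that $\tilde f_{k,m}(x)+1$ is divisible by $x^d$ but the next monomial appears at degree exactly $d$), rather than merely a congruence that could hold accidentally for several values of $d$. Once $d=d'$ is secured, everything else is a routine coefficient comparison reusing the already-established lemmas; no genuinely new idea is needed beyond what is in Lemmas \ref{lema1} and \ref{lema2}.
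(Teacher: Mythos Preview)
Your argument is correct and follows essentially the same route as the paper: use Observation~\ref{obs:Lema2Generalizado} to read off $d$ (hence $d=d'$) as the exponent of the lowest positive-degree monomial of $\tilde f_{k,m}$, then recover $f_{k,m}=f_{k',m'}$ by multiplying back by $h(x^d)=h(x^{d'})$ and compare coefficients. The only difference is cosmetic: the paper does not pass through the coprime case but, once $d=d'$ is secured, multiplies directly by $h(x^d)$ (and gets $k=k'$ from the degree); your reduction to $\tilde f_{k_1,m_1}=\tilde f_{k_1',m_1'}$ is an unnecessary but harmless detour.
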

\begin{proof} Let $d = \gcd(k, m)$ and $d' = \gcd(k', m')$. By the Observation \ref{obs:Lema2Generalizado}, $-x^d$ is the monomial with the least positive exponent in $\tilde{f}_{k, m}$, similarly for $-x^{d'}$ and $\tilde{f}_{k',m'}$, then $d = d'$ and $k = k'$. Moreover $f_{k,m}(x) = (x^{2d} - x^d + 1)\tilde{f}_{k, m}(x) = (x^{2d} - x^d + 1)\tilde{f}_{k, m'}(x) = f_{k,m'}(x)$ so
 $m = m'$.
\end{proof}

\begin{lema}\label{lema4}
Let $k>0$ then the polynomial $x^k - 2$ is irreducible.
\end{lema}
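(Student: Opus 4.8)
The plan is to prove that $x^k - 2$ is irreducible over $\mathbb{Q}$ by invoking Eisenstein's criterion at the prime $p = 2$. Writing $f(x) = x^k - 2 = x^k + 0\cdot x^{k-1} + \cdots + 0\cdot x - 2$, I observe that $2$ divides every coefficient except the leading one (which is $1$), and $2^2 = 4$ does not divide the constant term $-2$. By Eisenstein's criterion, $f$ is therefore irreducible over $\mathbb{Q}$, and since it is already monic this gives irreducibility as a polynomial in $\mathbb{Q}[x]$, which is what is required in the sense used earlier in the excerpt (irreducibility of a monic rational polynomial).

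An alternative route, if one prefers to avoid quoting Eisenstein's criterion, is to argue directly: by Gauss's lemma it suffices to show $x^k - 2$ does not factor nontrivially in $\mathbb{Z}[x]$. If $x^k - 2 = g(x)h(x)$ with $g,h \in \mathbb{Z}[x]$ monic of degrees $j$ and $k-j$ with $0 < j < k$, then the product of the (complex) roots of $g$ has absolute value equal to $|g(0)|$, and since all roots of $x^k-2$ have absolute value $2^{1/k}$, we get $|g(0)| = 2^{j/k}$, which is not an integer when $0 < j < k$ because $j/k$ is not an integer. This contradiction shows no such factorization exists. Either argument is short; I would present the Eisenstein version as the cleanest.

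There is essentially no main obstacle here: the statement is a classical textbook fact, included in the paper only because it is needed as an auxiliary input (presumably to handle a case where the characteristic polynomial of a $\theta$-digraph, after the $x^a$ factorization, reduces to something of the form $x^k - 2$, or to compare such a polynomial with an $f_{k,m}$). The only point requiring a sentence of care is the reduction to factorizations over $\mathbb{Z}$ (Gauss's lemma / rationality of coefficients), which Eisenstein's criterion already packages for us. I would state it in two lines: \emph{Proof. Apply Eisenstein's criterion with $p=2$: $2 \mid 2$, $2 \nmid 1$, and $4 \nmid 2$. Hence $x^k - 2$ is irreducible over $\mathbb{Q}$.} $\hfill\blacksquare$
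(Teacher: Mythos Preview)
Your proof is correct and matches the paper's approach exactly: the paper's entire proof is the single line ``Apply Eisenstein criterion for the prime number $2$.'' Your alternative argument via Gauss's lemma and the absolute values of the roots is also valid, but unnecessary here since Eisenstein already settles it.
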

\begin{proof}
Apply Eisenstein criterion for the prime number $2$.
\end{proof}

\begin{lema}\label{lema5}
Let $n, a, b\in\N$ with $n>b\geq a\geq 0$, $n\geq a+b+2$. Let $f(x) = x^n - x^b - x^a$, and $\rho$ its only real positive root. If $a = b$, then $\Irr_\rho(x) = x^{n-a} - 2$. {If $b>a$ and  $f_{n-a, b-a}$ is irreducible then $\Irr_\rho = f_{n-a, b-a}$. Otherwise,
$\Irr_\rho = \tilde{f}_{n-a, b-a}$.
}
\end{lema}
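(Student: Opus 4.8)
The statement splits into three cases according to whether $a=b$, or $b>a$ with $f_{n-a,b-a}$ irreducible, or $b>a$ with $f_{n-a,b-a}$ reducible. The first observation is that $f(x)=x^a(x^{n-a}-x^{b-a}-1)$, so since $\rho>0$ it is actually a root of $g(x):=x^{n-a}-x^{b-a}-1$. When $a=b$, $g(x)=x^{n-a}-2$, which is irreducible by Lemma~\ref{lema4}, hence equals $\Irr_\rho$. When $b>a$, set $k=n-a$ and $m=b-a$; the hypothesis $n\geq a+b+2$ guarantees $k>m>0$ so $g=f_{k,m}$ and Theorem~\ref{thm.paper} applies. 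If $f_{k,m}$ is irreducible we are immediately done; if it is reducible, $f_{k,m}=h(x^d)\tilde f_{k,m}$ with $\tilde f_{k,m}$ irreducible, and it remains to argue that $\rho$ is a root of $\tilde f_{k,m}$ rather than of $h(x^d)$.

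For that last point I would use the Perron--Frobenius structure recalled in Section~\ref{sec:preliminares}: $\rho=\rho(\theta(a,b,c))$ is a \emph{simple} real root of the characteristic polynomial and in particular is real and positive, whereas every root of $h(x^d)=x^{2d}-x^d+1$ satisfies $x^d$ being a primitive sixth root of unity, hence has modulus $1$ and is non-real (so certainly not equal to the positive real $\rho>1$, recalling $\rho(\theta)>1$ by Lemma~\ref{lem:sub} since a cycle is a proper subdigraph). Therefore $\rho$ must be a root of the remaining irreducible factor $\tilde f_{k,m}$, so $\Irr_\rho=\tilde f_{k,m}=\tilde f_{n-a,b-a}$. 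This completes all three cases.

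The only mild subtlety, and the step I would treat most carefully, is confirming that $\rho$ genuinely is a root of the correct factor in the reducible case: one must rule out that the positive real root of $f_{k,m}$ happens to coincide with a root of $h(x^d)$. The modulus argument above settles it cleanly (roots of $h(x^d)$ lie on the unit circle, $\rho>1$), but one should also note $f_{k,m}$ has exactly one positive real root by Descartes' rule of signs, so ``$\rho$'' is unambiguously defined and lies in $(1,\infty)$, matching the earlier discussion of $\rho(\theta)$. Everything else is bookkeeping: checking $k>m>0$ from $n\geq a+b+2$, and invoking uniqueness of the minimal polynomial.
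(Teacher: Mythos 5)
Your proof is correct and follows essentially the same route as the paper: factor out $x^a$, handle $a=b$ via Lemma~\ref{lema4}, and in the reducible case rule out $\rho$ being a root of $h(x^d)$. The only cosmetic difference is that the paper notes $h(x^d)$ has no real roots while you observe its roots lie on the unit circle and $\rho>1$; both settle the same point.
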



\begin{proof}
Consider two cases based on the relationship between $a$ and $b$:

\textbf{Case 1:} $a = b$. In this case, the polynomial simplifies to $f(x) = x^a(x^{n-a}-2)$. Given that $\rho > 0$ is a root, it must satisfy $x^{n-a}-2=0$, which is an irreducible polynomial by Lemma \ref{lema4}.

\textbf{Case 2:} $b > a$. Here, $f(x)$ becomes $x^a(x^{n-a}-x^{b-a}-1) = x^a f_{n-a,b-a}(x)$. It is clear that $\rho$ is a root of $f_{n-a, b-a}(x)$. If $f_{n-a, b-a}(x)$ is irreducible, then $\Irr_\rho = f_{n-a, b-a}$. If not, we express $f_{n-a, b-a}(x)$ as $h(x^d)\tilde{f}_{n-a,b-a}(x)$, where $d = \gcd(n-a, b-a)$. Since $h(x)$ lacks real roots, $h(x^d)$ also has no real roots, indicating that $\rho$ must be a root of $\tilde{f}_{n-a,b-a}(x)$.
\end{proof}\\

We are now in condition to prove the main result of this section.

\begin{teo}
\label{teo:rama}
Let $n, a_1, a_2, b_1, b_2\in\N$, such that $(a_1, b_1) \neq (a_2, b_2)$ and $n> b_i\geq a_i\geq0$, $n\geq a_i+b_i+2$ with $i=1, 2$. 
If $\rho_1,\rho_2$ are the positive real roots of $f_i(x) = x^n-x^{b_i}-x^{a_i}$, then $\rho_1\neq\rho_2$.
\end{teo}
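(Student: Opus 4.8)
The plan is to reduce the problem to comparing the minimal polynomials of $\rho_1$ and $\rho_2$, and to show that distinct parameter pairs $(a_i,b_i)$ always produce distinct minimal polynomials, so that $\rho_1=\rho_2$ would force $(a_1,b_1)=(a_2,b_2)$. First I would invoke Lemma~\ref{lema5}, which computes $\Irr_{\rho_i}$ explicitly in each of the three possible regimes: when $a_i=b_i$ it is $x^{n-a_i}-2$; when $b_i>a_i$ and $f_{n-a_i,\,b_i-a_i}$ is irreducible it is $f_{n-a_i,\,b_i-a_i}$; and otherwise it is $\tilde f_{n-a_i,\,b_i-a_i}$. Suppose toward a contradiction that $\rho_1=\rho_2=:\rho$. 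Then $\Irr_{\rho}$ is a single well-defined polynomial, so the expressions Lemma~\ref{lema5} assigns to the two index values must coincide.

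Next I would split into cases according to which regime each index falls in. If both are in the $a_i=b_i$ regime, then $x^{n-a_1}-2=x^{n-a_2}-2$ forces $a_1=a_2$, hence $b_1=b_2$, contradiction. If both are in the irreducible-$f$ regime, then $f_{n-a_1,\,b_1-a_1}=f_{n-a_2,\,b_2-a_2}$; comparing the three monomials $x^{n-a_i}$, $-x^{b_i-a_i}$, $-1$ gives $n-a_1=n-a_2$ and $b_1-a_1=b_2-a_2$, so again $(a_1,b_1)=(a_2,b_2)$. If both are in the reducible regime, then $\tilde f_{n-a_1,\,b_1-a_1}=\tilde f_{n-a_2,\,b_2-a_2}$, and Lemma~\ref{lema3} (applied with $k_i=n-a_i$, $m_i=b_i-a_i$) yields $n-a_1=n-a_2$ and $b_1-a_1=b_2-a_2$, a contradiction. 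The remaining work is the mixed cases.

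For the mixed cases I would argue as follows. The regime $a_i=b_i$ produces $x^{n-a_i}-2$, a binomial, while the regimes with $b_i>a_i$ produce trinomials (for irreducible $f$) or, in the reducible case, $\tilde f_{n-a_i,\,b_i-a_i}$ which by Observation~\ref{obs:Lema2Generalizado} satisfies $\tilde f \equiv -x^{d}-1 \pmod{x^{2d}}$ and hence has at least two nonzero low-order terms and is not of the form $x^{k}-2$; so the $a=b$ regime cannot match either of the $b>a$ regimes. For the case where one index gives an irreducible $f_{k,m}$ and the other gives a $\tilde f_{k',m'}$ arising from a reducible $f_{k',m'}$: here I would use Lemma~\ref{lema1}, which says that if $\tilde f_{k',m'}=f_{k,m}$ with $f_{k,m}$ of that trinomial form, then necessarily $k'=5d'$, $m'=4d'$, $k=3d'$, $m=d'$ with $d'=\gcd(k',m')$. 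Translating back through $k_i=n-a_i$, $m_i=b_i-a_i$, this pins down both pairs $(a_1,b_1)$ and $(a_2,b_2)$ in terms of a single $d'$, and one checks the resulting two pairs are equal (both give $a$ and $b$ determined by the same $d'$), contradicting $(a_1,b_1)\neq(a_2,b_2)$; alternatively, and more cleanly, one notes $f_{3d',d'}$ is itself reducible when $d'$ is such that $3d'+d'=4d'\equiv\dots$ — so I would double-check the congruence conditions of Theorem~\ref{thm.paper} to confirm $f_{k,m}=f_{3d',d'}$ cannot be the irreducible trinomial arising from the first index unless $d'=1$, and handle $d'=1$ directly. The main obstacle is precisely this last mixed case: keeping the bookkeeping straight between the ``$(n,a,b)$'' coordinates and the ``$(k,m)=(n-a,b-a)$'' coordinates, and making sure Lemma~\ref{lema1}'s conclusion genuinely forces the two original parameter pairs to coincide rather than merely to be two different pairs with the same associated $\rho$. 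Everything else is a routine comparison of monomials once Lemma~\ref{lema5} is in hand.
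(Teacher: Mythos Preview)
Your overall strategy matches the paper's exactly: compute $\Irr_{\rho_i}$ via Lemma~\ref{lema5} and compare across regimes. The same-regime cases and the mixed case ``$a=b$ versus $b>a$'' are fine (the paper dispatches the latter by noting the constant terms are $-2$ versus $-1$, a slightly cleaner version of your observation).

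The genuine gap is in the last mixed case, where one index yields an irreducible $f_{k,m}$ and the other a $\tilde f_{k',m'}$. After Lemma~\ref{lema1} (say index~$1$ is the reducible side) you get $n-a_1=5d$, $b_1-a_1=4d$, $n-a_2=3d$, $b_2-a_2=d$, hence $(a_1,b_1)=(n-5d,\,n-d)$ and $(a_2,b_2)=(n-3d,\,n-2d)$. These two pairs are \emph{not} equal for any $d\geq 1$, so your first proposed conclusion is simply false. Your fallback, that $f_{3d,d}$ might be reducible, also fails: with $k_1=3$, $m_1=1$ one has $k_1+m_1=4\not\equiv 0\pmod 3$, so Theorem~\ref{thm.paper} declares $f_{3d,d}$ irreducible for every $d\ge 1$, and there is nothing to rule out along that line.

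What you are missing is the one hypothesis you never invoked: $n\geq a_i+b_i+2$. Writing $n=5d+a_1$ gives $a_2=2d+a_1$ and $b_2=3d+a_1$, so $n\geq a_2+b_2+2$ reads $5d+a_1\geq 5d+2a_1+2$, i.e.\ $a_1\leq -2$, contradicting $a_1\geq 0$. This is exactly how the paper closes the case. The inequality $n\geq a+b+2$ (encoding $c\geq 0$ in the underlying $\theta(a,b,c)$) is essential: without it the statement is false, as witnessed by $n=5$, $(a_1,b_1)=(0,4)$, $(a_2,b_2)=(2,3)$, where $x^5-x^4-1=(x^2-x+1)(x^3-x-1)$ and $x^5-x^3-x^2=x^2(x^3-x-1)$ share the positive root of $x^3-x-1$.
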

\begin{proof}
It suffices to see that  $\Irr_{\rho_1}(x) \neq \Irr_{\rho_2}(x)$. It is clear that $f_i(x) = x^{a_i}(x^{n-a_i} - x^{b_i-a_i} - 1)$. We have three possible cases:
\begin{enumerate}
\item $a_1 = b_1$ and $a_2 = b_2$:\\
Suppose that $\Irr_{\rho_1} = \Irr_{\rho_2}$. Then, $\Irr_{\rho_i}(x) = x^{n-a_i}-2$ by Lemma \ref{lema5}. Hence, $x^{n-a_1} - 2 = x^{n-a_2} - 2$, implying $a_1 = a_2$, which contradicts $(a_1, b_1) \neq (a_2, b_2)$.

\item $a_1 = b_1$, $b_2 > a_2$:\\
By Lemma \ref{lema5} we have $\Irr_{\rho_1}(x) = x^{n-a_1}-2$ and $\Irr_{\rho_2} = f_{n-a_2,b_2-a_2}$ or $\Irr_{\rho_2} = \tilde{f}_{n-a_2, b_2-a_2}$.
The independent term of $\Irr_{\rho_1}$, $\Irr_{\rho_2}$ is $2$, $-1$ respectively. Hence $\Irr_{\rho_1}\neq \Irr_{\rho_2}$.

\item $b_1 > a_1$, $b_2 > a_2$:\\
Suppose that $\Irr_{\rho_1} = \Irr_{\rho_2}$.
There are three cases to consider:
\begin{enumerate}
\item[I.] If $\Irr_{\rho_1} = f_{n-a_1, b_1-a_1}$ and $\Irr_{\rho_2} = f_{n-a_2, b_2-a_2}$ then $a_1 = a_2$ and $b_1 = b_2$ {which is a contradiction}.

\item[II.] If $\Irr_{\rho_1} = \tilde{f}_{n-a_1, b_1-a_1}$ and $\Irr_{\rho_2} = \tilde{f}_{n-a_2, b_2-a_2}$, then by Lemma \ref{lema3}, $n-a_1 = n-a_2$ and $b_1 - a_1 = b_2 - a_2$ hence $b_1 = b_2$ and $a_1 = a_2$ {which is a contradiction}.

\item[III.] If $\Irr_{\rho_1} = \tilde{f}_{n-a_1, b_1-a_1}$ and $\Irr_{\rho_2} = f_{n-a_2, b_2-a_2}$, then by Lemma \ref{lema1}, $n-a_1 = 5d$, $b_1 - a_1 = 4d$, $n - a_2 = 3d$ and $b_2 - a_2 = d$.
Consequently we deduce that $n = 5d+a_1 = 3d+a_2$, leading to $a_2 = 2d+a_1$. Also, $b_2 = d + a_2 = 3d + a_1$. The condition $n \geq b_2+a_2+2$ simplifies to $5d+a_1 \geq 3d+a_1 + 2d + a_1 +2= 5d+2a_1 + 2$
implying $0\geq a_1+2$, which is a contradiction.
\end{enumerate}
\end{enumerate}
\end{proof}\\

{The following result is a direct consequence of Theorem \ref{teo:rama}.}\\
\begin{teo} {The class of $\theta$-digraphs is DCS.}
\end{teo}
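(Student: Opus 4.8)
The plan is to reduce the DCS property for $\theta$-digraphs to the statement of Theorem~\ref{teo:rama}, which already does the heavy lifting. Recall that a $\theta$-digraph satisfies $\Pi(\theta) = \{0, 1, \rho(\theta)\}$, so two $\theta$-digraphs of the same order are complementarity cospectral precisely when their spectral radii coincide. Hence, to establish DCS it suffices to show that two non-isomorphic $\theta$-digraphs on $n$ vertices have distinct spectral radii. Moreover, by Theorem~\ref{main_theorem} together with Observation~\ref{rem:inftyFamily} and the fact that the seven families are pairwise non-isomorphic, a digraph cospectral with a $\theta$-digraph that has the same order must itself be a $\theta$-digraph (a digraph from $F_\infty$ or a Type $3,4,5$ digraph would need to be checked not to collide, but since those lie in distinct structural families with the same three-element spectrum shape, the comparison ultimately comes down again to spectral radii — and here one should simply invoke that the $\theta$-family is being compared within itself, matching the paper's definition of ``DCS in $\mathcal{D}$'' and then the global DCS notion). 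So the core reduction is: non-isomorphic $\theta(a,b,c)$'s of fixed order have different $\rho$.

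Next I would translate the isomorphism type of $\theta(a,b,c)$ into the parameters appearing in Theorem~\ref{teo:rama}. By the normalization in the excerpt we assume $a \le b$ and $b > 0$, and $n = a+b+c+2$. The characteristic polynomial is $p_\theta(x) = x^n - x^b - x^a$, so with the notation of Theorem~\ref{teo:rama} we take $b_i = b$, $a_i = a$; the hypotheses $n > b \ge a \ge 0$ and $n \ge a+b+2$ are exactly satisfied because $c \ge 0$. Two $\theta$-digraphs $\theta(a_1,b_1,c_1)$ and $\theta(a_2,b_2,c_2)$ of the same order $n$ are isomorphic if and only if $(a_1,b_1) = (a_2,b_2)$ (which then forces $c_1 = c_2$ since $c_i = n - a_i - b_i - 2$); this uses the classification of $\theta$-digraphs and the normalization $a \le b$. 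Therefore non-isomorphic means $(a_1,b_1) \neq (a_2,b_2)$, which is precisely the hypothesis of Theorem~\ref{teo:rama}, and its conclusion $\rho_1 \neq \rho_2$ gives $\Pi(\theta_1) \neq \Pi(\theta_2)$.

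Finally I would assemble these observations: given a $\theta$-digraph $D = \theta(a,b,c)$ on $n$ vertices and any cospectral digraph $H$ of the same order $n$, $H$ has $\#\Pi(H) = 3$ and $\Pi(H) = \{0,1,\rho\}$ with $\rho = \rho(D)$, hence $H \in \mathcal{SCD}_3$ and by Theorem~\ref{main_theorem} belongs to one of the seven families; the spectral-radius hierarchy and the explicit characteristic polynomials rule out that $H$ is in $F_\infty$ or of Type $3,4,5$ unless it is in fact a $\theta$-digraph, and then Theorem~\ref{teo:rama} forces $(a,b) $ to match, so $H \cong D$. The main obstacle — really the only nontrivial point — is cross-family collisions: one must be sure that a $\theta$-digraph cannot be complementarity cospectral with a member of another family of the same order. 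This is handled by noting that each family's only non-trivial proper strongly connected induced subdigraphs are cycles, so in every family the spectrum is $\{0,1,\rho\}$; distinguishing them then reduces to $\rho$, and within the scope of this theorem the relevant comparison is internal to the $\theta$-family, where Theorem~\ref{teo:rama} applies. I would state the theorem's proof as essentially a one-line corollary: ``Immediate from Theorem~\ref{teo:rama}, since $\Pi(\theta(a,b,c)) = \{0,1,\rho(\theta)\}$ and $\rho(\theta)$ is the unique positive real root of $x^n - x^b - x^a$.''
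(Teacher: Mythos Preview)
Your core argument is correct and matches the paper's proof exactly: both reduce the claim to Theorem~\ref{teo:rama} via the observation that $\Pi(\theta(a,b,c)) = \{0,1,\rho(\theta)\}$, so complementarity cospectrality of two $\theta$-digraphs of the same order amounts to equality of spectral radii, and Theorem~\ref{teo:rama} then forces $(a_1,b_1)=(a_2,b_2)$, hence isomorphism. Your suggested one-liner at the end is precisely what the paper does.

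The discussion of cross-family collisions, however, is both unnecessary and unjustified. The statement is that the \emph{class} of $\theta$-digraphs is DCS, which by the paper's definition means only that any two complementarity cospectral $\theta$-digraphs of the same order are isomorphic; you are not asked to rule out collisions with $F_\infty$ or Type~3,~4,~5 digraphs. Your claim that ``the spectral-radius hierarchy and the explicit characteristic polynomials rule out that $H$ is in $F_\infty$ or of Type~3,~4,~5$'' is not proved anywhere (neither by you nor in the paper), and Observation~\ref{rem:inftyFamily} only compares $\infty$, $D_1$, $D_2$ on the same vertex set against each other, not against $\theta$-digraphs. Drop that paragraph entirely; the within-class reduction to Theorem~\ref{teo:rama} is the whole proof.
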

\begin{proof}
Let $n>a_i\geq b_i\geq0$ be integers for $i=1,2$. Consider $\theta _i=\theta(a_i, b_i, n-a_i-b_i+2)$ and $\rho_i=\rho(\theta_i)$ with $i=1, 2$.\\

$\Pi(\theta_1)=\Pi(\theta_2)$ if, and only if, $\rho_1=\rho_2$. Applying Theorem \ref{teo:rama} implies

\[\rho_1=\rho_2  \Leftrightarrow  (a_1,b_1)=(a_2,b_2) \Leftrightarrow \theta_1 \cong \theta_2.\]
\end{proof}\\

This marks the culmination of our examination of \(\theta\)-digraphs.\\

In what follows, we present a family of examples that shows that the class of Type $3$ digraphs is not DCS. To start with, we will prove the following useful lemma.


\begin{lema}
Let $D_3(i,j)$ and $D_3(i',j')$ be Type $3$ digraphs with $n$ vertices. Then, $D_3(i,j)\cong D_3(i',j')$ if, and only if, $(i,j) = (i',j')$.
\end{lema}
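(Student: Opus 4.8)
The statement is an isomorphism-classification lemma: among Type~$3$ digraphs on $n$ vertices, the pair $(i,j)$ is a complete invariant. One direction is trivial (if $(i,j)=(i',j')$ the digraphs are literally the same by definition), so the work is entirely in the forward direction: assuming $D_3(i,j)\cong D_3(i',j')$, we must recover $(i,j)=(i',j')$. The plan is to extract $(i,j)$ from the digraph in an isomorphism-invariant way. Recall $D_3(i,j)=\vec C_n\cup\{(1,i),(i-1,j)\}$ with $2<i<j\le n$; its only induced strongly connected proper nontrivial subdigraphs are the two cycles $\vec C_{n-(i-2)}$ and $\vec C_{n-(j-i)}$, and I would first read off the multiset $\{\,n-(i-2),\,n-(j-i)\,\}$ of their lengths as an invariant. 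This alone does not pin down $(i,j)$ (two unknowns, but the two cycle lengths could be swapped, and in any case they only determine $i-2$ and $j-i$ as an unordered pair), so a finer argument is needed.

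The key structural observation I would use is a degree/arc count: in $D_3(i,j)$ every vertex has in-degree and out-degree $1$ except vertex $1$ (out-degree $2$: arcs $(1,2)$ and $(1,i)$), vertex $i$ (in-degree $2$: arcs $(i-1,i)$ and $(1,i)$), vertex $i-1$ (out-degree $2$: arcs $(i-1,i)$ and $(i-1,j)$), and vertex $j$ (in-degree $2$: arcs $(j-1,j)$ and $(i-1,j)$). So there are exactly two ``sources of extra arcs'' ($1$ and $i-1$) and two ``sinks of extra arcs'' ($i$ and $j$); an isomorphism must carry this set of four distinguished vertices to the corresponding set in $D_3(i',j')$. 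Next I would use the directed-distance structure along the Hamiltonian cycle $\vec C_n$ (which is itself recoverable, e.g.\ as the unique spanning strongly connected subdigraph in which every vertex has in- and out-degree $1$, or by noting the two chords are exactly the arcs not on any length-$n$ cycle). Walking along $\vec C_n$ one meets the distinguished vertices in the cyclic order $1,\ i-1,\ i,\ j$; the chord $(1,i)$ jumps forward by $i-1$ steps and $(i-1,j)$ jumps forward by $j-i+1$ steps, and between the two tails ($1$ and $i-1$) there are $i-2$ steps. These three gap lengths along the cycle are isomorphism invariants once the cycle and the four marked vertices are identified, and from any two of them together with $n$ one solves for $i$ and $j$ uniquely. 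I would phrase this as: the ordered data (position of the chord-head relative to its chord-tail, measured along $\vec C_n$) is preserved, hence $i-1=i'-1$ and $j-i=j'-i'$, giving $(i,j)=(i',j')$.

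The main obstacle is not any deep computation but making the ``read off $(i,j)$ from invariants'' step genuinely rigorous: one must argue that the Hamiltonian cycle $\vec C_n$ underlying $D_3(i,j)$ is canonically determined (unique with the stated degree property) so that an abstract isomorphism of digraphs induces a cyclic-order-preserving bijection on $V(\vec C_n)$, and then that the two chords, being the complement of that cycle in the arc set, are mapped to the two chords; after that, distinguishing which chord is $(1,i)$ and which is $(i-1,j)$ requires the observation that the tail of the first chord ($1$) and the head of the second ($j$) are distinct from, and in a fixed cyclic relationship with, the shared ``middle'' vertices $i-1,i$ which are adjacent on $\vec C_n$ — in particular $i-1$ and $i$ are consecutive on the cycle while $1$ and $j$ need not be, and $1$ is the unique vertex that is a chord-tail but not adjacent on $\vec C_n$ to a chord-head on its incoming side. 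Once the four vertices are correctly labelled the rest is the elementary linear bookkeeping above. I would therefore devote the bulk of the written proof to the uniqueness of $\vec C_n$ and the canonical labelling of the four special vertices, and dispatch the final arithmetic in one line.
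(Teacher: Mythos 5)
Your argument is correct, and its first half coincides with the paper's: both identify the set $\{1,\,i-1\}$ as the only vertices of in-degree $1$ and out-degree $2$, and both then need a way to tell these two apart and to recover $j$. Where you diverge is in how the rest is finished. The paper distinguishes $1$ from $i-1$ by the simple path joining them whose interior vertices all have in- and out-degree $1$ (this pins down $i=i'$), and then simply invokes cospectrality of isomorphic digraphs: equality of the characteristic polynomials $x^n-x^{i-2}-x^{j-i}-1$ forces $j=j'$ once $i=i'$ is known. You instead stay entirely combinatorial: you show the Hamiltonian cycle is the unique spanning subdigraph with all in- and out-degrees equal to $1$ (correct --- vertex $2$ forces the choice $(1,2)$ at vertex $1$, which in turn forces $(i-1,i)$ at vertex $i-1$), so any isomorphism maps cycle to cycle and chords to chords; you then separate $1$ from $i-1$ by noting that $i-1$ is the unique chord-tail whose cycle-successor is a chord-head (since $i>2$ rules out $2$ being a chord-head), and read off $i$ and $j$ from the cycle-gaps and the chord $(i-1,j)$. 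Both routes are valid; the paper's is shorter because the characteristic polynomial is already computed via Sachs' theorem, while yours is self-contained and does not pass through the spectrum at all, which is arguably cleaner in a lemma whose very purpose is to exhibit non-isomorphic cospectral pairs. One presentational remark: your phrase ``not adjacent on $\vec C_n$ to a chord-head on its incoming side'' should be stated as ``its successor on $\vec C_n$ is not a chord-head'' to be unambiguous, but the underlying observation is sound.
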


\begin{proof}
First, observe that the only vertices $x$ in $D_3(i,j)$ verifying
\[
gr^{-}(x)=1 \quad gr^{+}(x)=2,
\]
are $1$ and $i-1$. Moreover, we can distinguish one from each other given the simple path that connects them, composed exclusively by vertices satisfying $gr^{-}(x)=gr^{+}(x)=1$ (see Figure~\ref{fig:contraejemploTipo3}). Then, $D_3(i,j)\cong D_3(i',j')$ implies that $i=i'$. Furthermore, $p_{D_3(i,j)}(x)=p_{{D}_3(i,j')}(x)$ implies $x^n-x^{i-2}-x^{j-i}-1=x^n-x^{i-2}-x^{j'-i}-1$, hence $j=j'$.

\begin{figure}[h]
\centering
\includegraphics[scale=0.20]{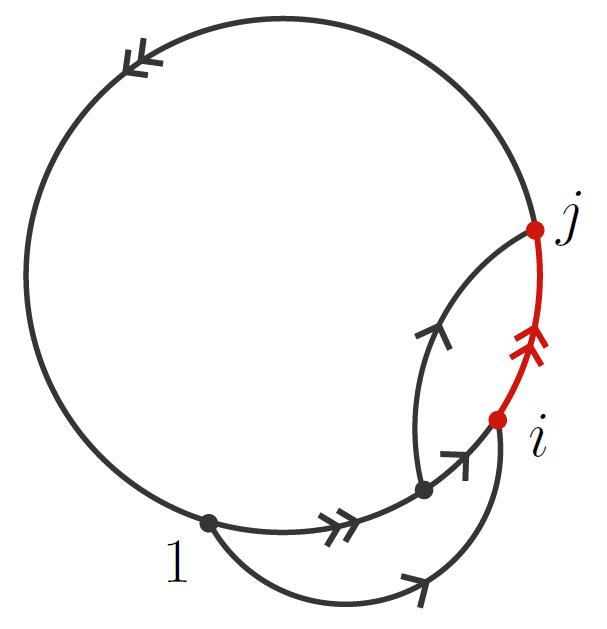}
\caption{Digraph $D_3(i,j)$ and the simple path mentioned colored in red.}
\label{fig:contraejemploTipo3}
\end{figure}
\end{proof}

\begin{ej}
\end{ej}
\noindent

Let \(n \geq 5\) be an integer. Consider the digraphs \(D_3(3,n)\) and \(D_3(n-1,n)\), which are not isomorphic as established by the preceding lemma. Nonetheless, it is observed that their characteristic polynomials coincide:
\[
p_{D_3(3,n)}(x) = x^n - x^{n-3} - x - 1 = p_{D_3(n-1,n)}(x).
\]
Therefore, despite their structural differences, \(D_3(3,n)\) and \(D_3(n-1,n)\) are cospectral, and in particular, complementarity cospectral.\\


Similar to our exploration of Type 3 digraphs, we will introduce an example pertaining to Type 4 digraphs that cannot be distinguished solely by their complementarity spectra. But before, let us prove the following lemma.\\

\begin{lema}
Let $D_4(i)$ be a Type $4$ digraph of order $n$ defined as $\vec{C}_n\cup\{(2,1),(i+1,i)\}$ with $i=3,\hdots, n-1$. Then, $D_4(i)\cong D_4(j)$ if, and only if, $i=j$ or $i+j=n+2$.
\end{lema}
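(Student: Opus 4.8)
The plan is to analyze the automorphism/isomorphism structure of the digraph $D_4(i) = \vec{C}_n \cup \{(2,1),(i+1,i)\}$ by locating the two added chords via local degree data, exactly as in the preceding Type $3$ lemma. Write $e_1 = (2,1)$ and $e_2 = (i+1,i)$. These are the only two arcs of the form ``backward'' chord, and the vertices that are heads or tails of a chord (i.e. vertices $x$ with $gr^-(x) = 2$ or $gr^+(x) = 2$) are precisely $1, 2, i, i+1$. All the remaining vertices have $gr^-(x) = gr^+(x) = 1$ and lie on the directed arcs of $\vec C_n$ between these four distinguished vertices. So any isomorphism $D_4(i) \to D_4(j)$ must carry the set $\{e_1, e_2\}$ of chords to the set of chords of $D_4(j)$, and must respect the ``gap lengths'' along $\vec C_n$ between consecutive chord endpoints.

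First I would make the gap lengths explicit. Reading around $\vec C_n$ starting from vertex $1$, the chord endpoints occur in cyclic order $1, 2, i, i+1$, and the four arcs of $\vec C_n$ they cut out have lengths $1$ (from $1$ to $2$), $i-2$ (from $2$ to $i$), $1$ (from $i$ to $i+1$), and $n - (i-1) = n-i+1$ (from $i+1$ back to $1$). The chord $e_1$ joins the endpoints of the first length-$1$ arc in the ``backward'' direction, and $e_2$ joins the endpoints of the second length-$1$ arc in the ``backward'' direction. Thus, up to the cyclic symmetry, $D_4(i)$ is completely described by the unordered pair of the two ``long'' gaps $\{i-2,\ n-i+1\}$ together with the combinatorial pattern of where the chords attach. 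The key point is that the configuration has an orientation-reversing-looking symmetry coming from swapping the roles of the two chords: mapping the cycle by $k \mapsto$ (the vertex that plays the mirrored role) sends $D_4(i)$ to $D_4(j)$ precisely when the two long gaps are interchanged, i.e. when $\{i-2, n-i+1\} = \{j-2, n-j+1\}$, which is $i=j$ or $i-2 = n-j+1$, i.e. $i+j = n+2$.

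So the steps are: (1) show $gr^-,gr^+$ identify $\{1,2,i,i+1\}$ as the chord endpoints and the four connecting paths as described, so any isomorphism preserves the multiset of gap lengths and the attachment pattern; (2) deduce the ``only if'' direction — if $D_4(i)\cong D_4(j)$ then the gap multisets coincide, forcing $\{i-2,n-i+1\}=\{j-2,n-j+1\}$, hence $i=j$ or $i+j=n+2$; (3) for the ``if'' direction, when $i+j=n+2$ exhibit the explicit isomorphism, which is the map sending vertex $k \mapsto$ the appropriate reflection of the cycle (concretely, reverse the long arc from $i+1$ back to $1$ and the long arc from $2$ to $i$, matching them to the corresponding arcs of $D_4(j)$), and check it carries $e_1 \mapsto e_2$, $e_2 \mapsto e_1$, and each $\vec C_n$-arc to a $\vec C_n$-arc of $D_4(j)$. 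I would present (3) by writing the permutation in closed form in terms of $k$, $i$, $n$ and verifying it is a bijection respecting arcs.

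The main obstacle I anticipate is step (3): pinning down the explicit permutation realizing the isomorphism when $i+j=n+2$ and checking it is well-defined on all $n$ vertices (the ``double arrow'' convention hides some case analysis about which arc a given vertex lies on, and one must be careful that the short length-$1$ arcs and the chords are matched consistently, not inverted in a way that breaks the cyclic orientation). A secondary subtlety in step (2) is ruling out spurious matchings of gaps: one must use not just the multiset $\{1, i-2, 1, n-i+1\}$ of all four gaps but the fact that the two unit gaps are each ``doubled'' by a chord in a fixed rotational sense, so that a would-be isomorphism cannot, say, send the $2\to i$ arc onto the $i+1 \to 1$ arc unless the lengths literally agree; handling the small cases $i=3$ or $i = n-1$ (where $i-2 = 1$ and some gaps collide, possibly creating extra automorphisms or an $\infty$- or $\theta$-like coincidence) may need to be checked separately to be safe.
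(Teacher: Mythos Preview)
Your approach is essentially the same as the paper's: the paper simply identifies the two distinguished simple paths $\vec{P}_{i-1}$ and $\vec{P}_{n-i+1}$ (your two ``long gaps'') and observes that an isomorphism must match them as an unordered pair, yielding $i=j$ or $i+j=n+2$. One arithmetic slip to fix: the arc of $\vec C_n$ from $i+1$ back to $1$ has length $n-i$, not $n-i+1$ (your four gap lengths should sum to $n$); with your written value the cross-match $i-2=n-j+1$ would give $i+j=n+3$, so correct that before writing out the argument.
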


\begin{proof}
The digraph $D_4(i)$, represented in Figure~\ref{fig:contraejemploTipo4}, has two distinguished simple paths $\vec{P}_{i-1}$ and $\vec{P}_{n-i+1}$. If $i\neq j$ then,
\[D_4(i)\cong D_4(j) \Leftrightarrow
\begin{cases}
\text{ the lengths of the simple paths $\vec{P}_{i-1}$ and $\vec{P}_{n-j+1}$ coincides,}\\
\text{ the lengths of the simple paths $\vec{P}_{j-1}$ and $\vec{P}_{n-i+1}$ coincides.}
\end{cases}
\]
Consequently, both conditions can be reduced to $i+j=n+2$. Then, for $j\neq i, n+2-i$ it is deduced that $D_4(i)\ncong D_4(j)$, but \[p_{D_4(i)}(x)=x^n-2x^{n-2}+x^{n-4}-1=p_{D_4(j)}(x).\]

\begin{figure}[h!]
\centering
\includegraphics[scale=0.16]{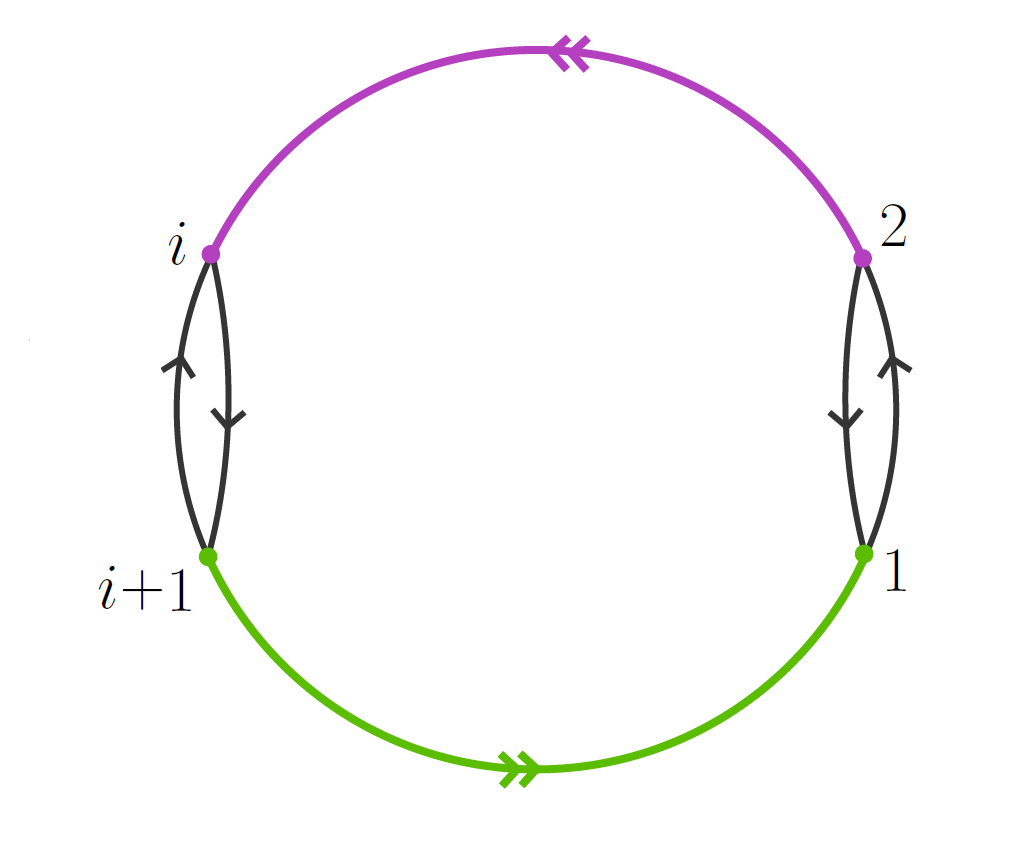}
\caption{Digraph $D_4(i)$ and the simple paths $\vec{P}_{i-1}$ colored in violet and $\vec{P}_{n-i+1}$ colored in green.}
\label{fig:contraejemploTipo4}
\end{figure}

\end{proof}

The following example is documented in \cite{flor}.\\

\begin{ej}
\end{ej}


\noindent Consider the digraphs \(D_4(3)\) and \(D_4(j)\), where \(j \neq 3, n-1\), non-isomorphic as established in the preceding lemma. Observe that their characteristic polynomials coincide:
\[
p_{D_4(3)}(x) = x^n - 2x^{n-2} + x^{n-4} - 1 = p_{D_4(j)}(x).
\]
Consequently, \(D_4(3)\) and \(D_4(j)\) are cospectral, notably complementarity cospectral.\\

Before analyzing the complementarity spectrum of Type $5$ digraphs, let us observe that they are not uniquely determined by the parameters $i$ and $j$, but by the sizes of the cycles in it: $4\leq r\leq s\leq t<n$ (see Figure~\ref{fig:ciclosEnTipo5}). In terms of these parameters, we have that 
\[p_{D_5}(x)=x^n-x^{n-r}-x^{n-s}-x^{n-t}-2.\]

\begin{figure}[h!]
\centering
\includegraphics[scale=0.15]{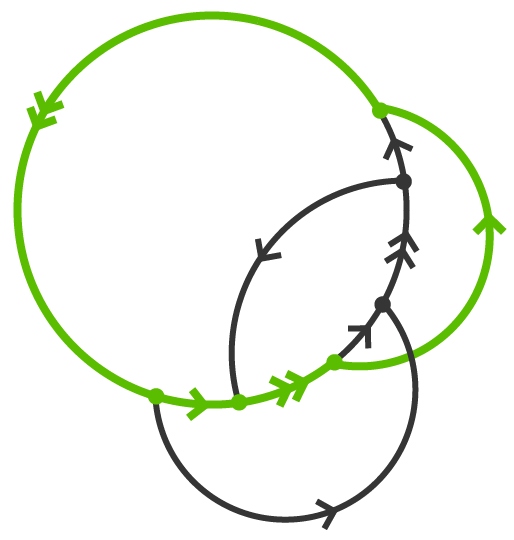}
\includegraphics[scale=0.15]{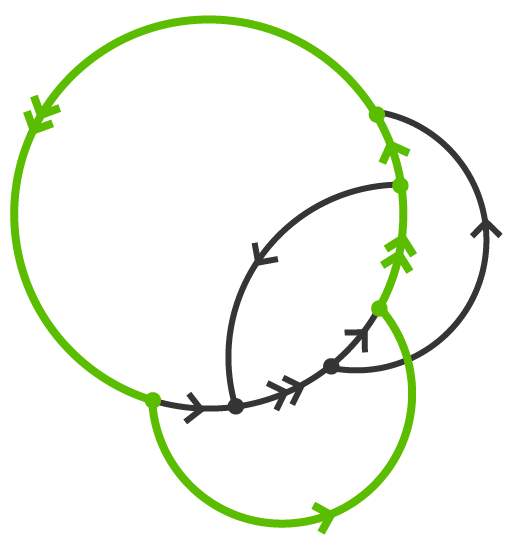}
\includegraphics[scale=0.15]{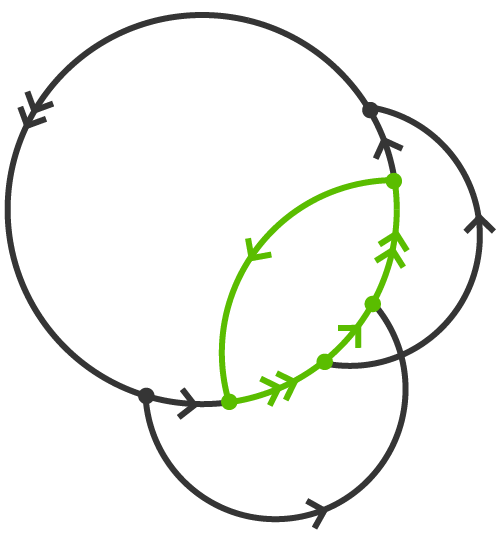}
\caption{Cycles $\vec{C_r}$, $\vec{C_s}$, $\vec{C_t}$ in a Type $5$ digraph colored in green.}
\label{fig:ciclosEnTipo5}
\end{figure}

\begin{prop}
\label{propTipo5}
Let $D$ and $D'$ be Type $5$ digraphs. $D$ and $D'$ are isomorphic if, and only if, they are cospectral.
\end{prop}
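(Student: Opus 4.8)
The plan is to reduce the isomorphism question for Type $5$ digraphs to a statement about their characteristic polynomials, and then to a statement about spectral radii, mirroring the strategy used for the $\theta$-digraphs. Recall that a Type $5$ digraph is determined, up to isomorphism, by the ordered triple of cycle lengths $4 \leq r \leq s \leq t < n$, and that its characteristic polynomial is $p_{D_5}(x) = x^n - x^{n-r} - x^{n-s} - x^{n-t} - 2$. The forward implication (isomorphic $\Rightarrow$ cospectral) is immediate, since isomorphic digraphs have the same adjacency matrix up to permutation conjugation. So the content is the converse: cospectral $\Rightarrow$ isomorphic.

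First I would argue that the characteristic polynomial determines the multiset $\{r,s,t\}$, hence the ordered triple $(r,s,t)$ since we have normalized $r \leq s \leq t$. Indeed, from $p_{D_5}(x) = x^n - x^{n-r} - x^{n-s} - x^{n-t} - 2$ one reads off $n$ as the degree, the constant term $-2$, and the set of exponents $\{n-r, n-s, n-t\}$ appearing with coefficient $-1$ among the remaining monomials; note $n-t \geq 1$ because $t < n$, so none of these exponents collides with the constant term, and the coefficient $-2$ (rather than $-1$) at $x^0$ distinguishes Type $5$ from Type $3$ and from the $\theta$-family. Some care is needed if two of $n-r, n-s, n-t$ coincide: then the corresponding coefficient is $-2$ at a positive exponent, which still lets us recover the multiset of exponents (and in that degenerate case the cycle lengths are still recovered, e.g. $r = s$). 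Recovering $\{n-r,n-s,n-t\}$ from the polynomial recovers $\{r,s,t\}$, and with the convention $r\le s\le t$ we recover $(r,s,t)$, hence $D$ up to isomorphism.

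Second, I must connect "cospectral" in the sense of $\Pi$ — equality of complementarity spectra — to equality of characteristic polynomials, or at least close the loop. Since $\Pi(D_5) = \{0,1,\rho(D_5)\}$, two Type $5$ digraphs are complementarity cospectral precisely when they have the same spectral radius. So it suffices to show that the map $(r,s,t) \mapsto \rho(D_5(r,s,t))$ is injective on triples with $4 \leq r \leq s \leq t < n$ and fixed $n$. Equivalently, using the first step, it suffices to show that if two Type $5$ digraphs of the same order have the same spectral radius then they have the same characteristic polynomial. I would approach this via the irreducible-polynomial technique of Section~\ref{sec_Ftheta}: let $q(x) = x^{n-t}\bigl(x^t - x^{t-r} - x^{t-s} - 1\bigr) - 2$, or better factor out appropriately, and analyze $\mathrm{Irr}_\rho$ for $\rho = \rho(D_5)$; the goal is that $\rho$ together with the constraint that it be a Perron root of a Type $5$ polynomial pins down $(r,s,t)$.

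The main obstacle I anticipate is exactly this last injectivity of the spectral radius: unlike the $\theta$ case, the polynomial has four "moving" exponents ($n$, $n-r$, $n-s$, $n-t$) and a constant term $-2$, so the clean irreducibility results of \cite{ljun} (Theorem~\ref{thm.paper}) do not apply directly, and $\mathrm{Irr}_\rho$ need not be the characteristic polynomial itself. One fallback is a monotonicity argument in the spirit of Theorem~\ref{teo:inftyDCS}: fixing $n$ and one or two of the cycle lengths, show that $\rho$ is strictly monotone in the remaining parameter by forming the difference of two characteristic polynomials, factoring it as $x^{m}(x-1)(\cdots)$ with the trailing factor positive for $x>1$, and using that each $p_{D_5}$ is strictly increasing on $(1,\infty)$ (which follows from Descartes' rule applied to $p'_{D_5}$ together with evaluation of $p'_{D_5}$ near $x=1$, exactly as in the $F_\infty$ proofs). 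Iterating such comparisons to move from an arbitrary triple $(r,s,t)$ to another via single-coordinate steps would give a full ordering and hence injectivity. If the difference polynomial does not factor so cleanly for all pairs of triples, the algebraic route — showing $\mathrm{Irr}_{\rho_1} = \mathrm{Irr}_{\rho_2}$ forces $(r_1,s_1,t_1) = (r_2,s_2,t_2)$ by comparing the lowest-degree monomials of $\mathrm{Irr}_\rho$ after dividing out any cyclotomic-type factor — is the robust alternative, and I expect the resolution to combine both: monotonicity to handle the generic comparisons and a short polynomial-identity argument for the degenerate collisions among $n-r, n-s, n-t$.
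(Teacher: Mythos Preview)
You have misread the proposition: here ``cospectral'' means having the same ordinary spectrum, hence the same characteristic polynomial, not the same complementarity spectrum. Your entire second step---showing that the spectral radius alone determines $(r,s,t)$---is therefore not needed for this proposition. In fact that injectivity statement is precisely the conjecture at the end of the section, which the paper leaves open (it is only verified computationally for $n\le 200$ in Theorem~\ref{conj:rama}). So the route you sketch for the second step cannot close with current tools: the monotonicity argument you propose covers only some comparisons (this is Theorem~\ref{partial_tipo5}), and the irreducibility route is not known to work for these four-term polynomials.

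More seriously, your first step assumes the crux of the proof when you write ``Recall that a Type $5$ digraph is determined, up to isomorphism, by the ordered triple of cycle lengths.'' That claim is exactly what the proposition establishes (and is recorded as a corollary immediately afterward). The paper's argument is structural and constructive: it first shows $r+s+t=2n$ because each vertex lies in exactly two of the three cycles; then it takes the subdigraph $H$ spanned by the arcs of two of the cycles, identifies $H$ as a specific $\theta$-digraph, and invokes the classification from \cite{flor2} to see that only two particular arcs can be appended to $H$ while staying in $\mathcal{SCD}_3$; finally an arc count ($n+3$ in $D_5$ versus $n+1$ in $H$) forces both arcs to be added, yielding a unique digraph. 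Your reading of $(r,s,t)$ from the characteristic polynomial is correct and is the easy half; the uniqueness-from-cycle-lengths is the half you skipped.
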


\begin{proof}
If $D$ and $D'$ are isomorphic, they are cospectral. We will prove the reciprocal: let $r\leq s\leq t<n$ and $r'\leq s'\leq t'<n$ be the sizes of the cycles in $D$ and $D'$ respectively. Consider the characteristic polynomials equality,

 \[p_D(x)=x^n-x^{n-r}-x^{n-s}-x^{n-t}-2=x^n-x^{n-r'}-x^{n-s'}-x^{n-t'}-2=p_{D'}(x),\]
 
then, $r=r'$, $s=s'$ and $t=t'$. Let us see that those identities imply the isomorphism of both digraphs.\\

We will see that the size of the cycles ($r,s$ and $t$) gives place to a unique Type $5$ digraph constructively. To start with, observe that each vertex of the Type $5$ digraph belongs to exactly both of these cycles, thus \[r+s+t=2n.\]

Next, consider the sub-digraph $H$ with the arcs of the cycles $\vec{C_r}$ and $\vec{C_s}$ (see Figure~\ref{fig:H}).\\

\begin{figure}[h!]
\centering
\includegraphics[scale=0.15]{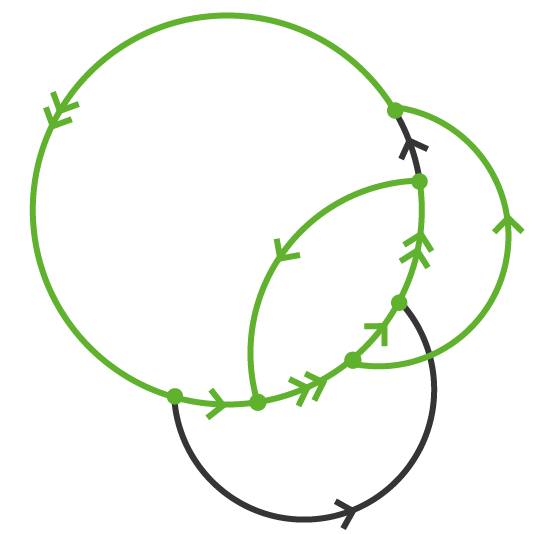}
\hspace{.5cm}
\includegraphics[scale=0.15]{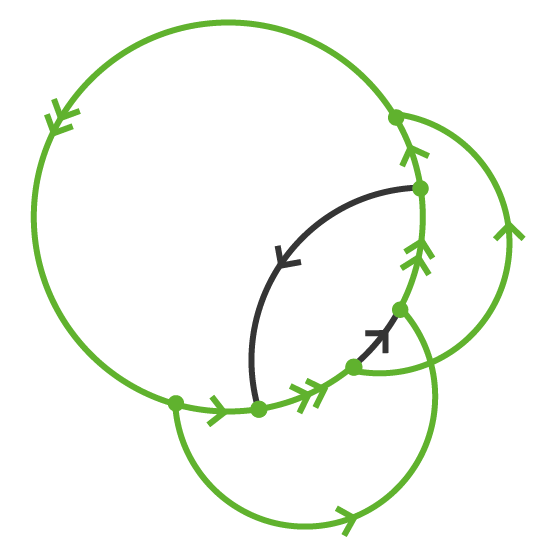}
\hspace{.5cm}
\includegraphics[scale=0.15]{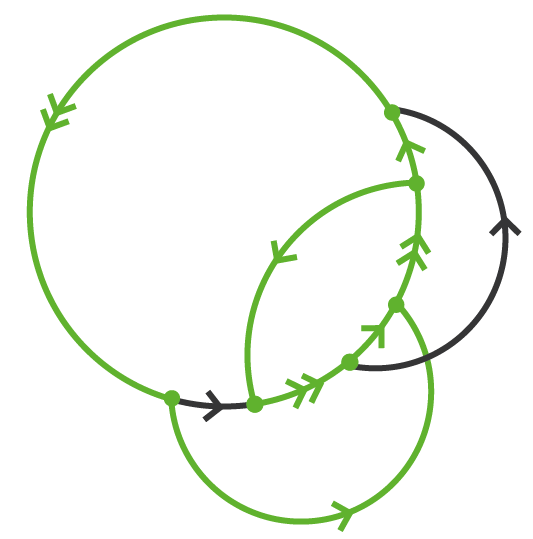}
\caption{Possible digraphs $H$ pointed out in green.}
\label{fig:H}
\end{figure}

We assume our analysis falls within the scope of the first case (left on Figure~\ref{fig:H}), denoted as $H$, with the understanding that other cases follow a similar rationale.\\

As illustrated in Figure~\ref{fig:HconColores}, we identify $H$ as $\theta(r+t-n, s+t-n, n-t-2)$. According to Case 1 of Theorem 4.2 in \cite{flor2}, the only arcs that may be appended to $H$ within the framework of the $\theta$-digraph are $(v_a,w_b)$ and $(v_b,w_a)$.

\begin{figure}[h!]
\centering
\includegraphics[scale=0.15]{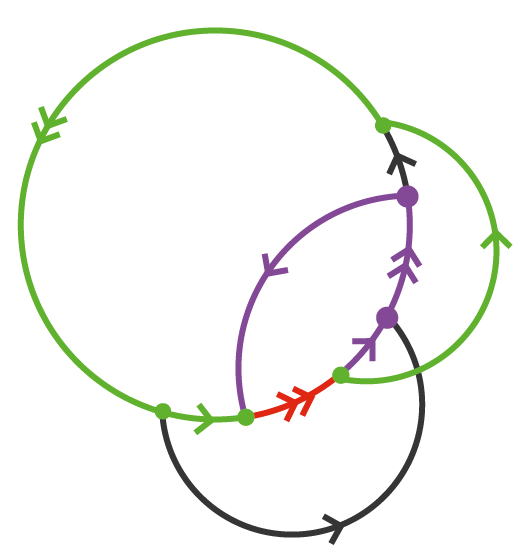}
\caption{Digraph $H=\theta(a,b,c)$ where $\vec{P}_{a+2}$, $\vec{P}_{b+2}$ and $\vec{P}_{c+2}$ are respectively colored in red, violet and green.}
\label{fig:HconColores}
\end{figure}

\begin{figure}[h!]
\centering\includegraphics[scale=0.15]{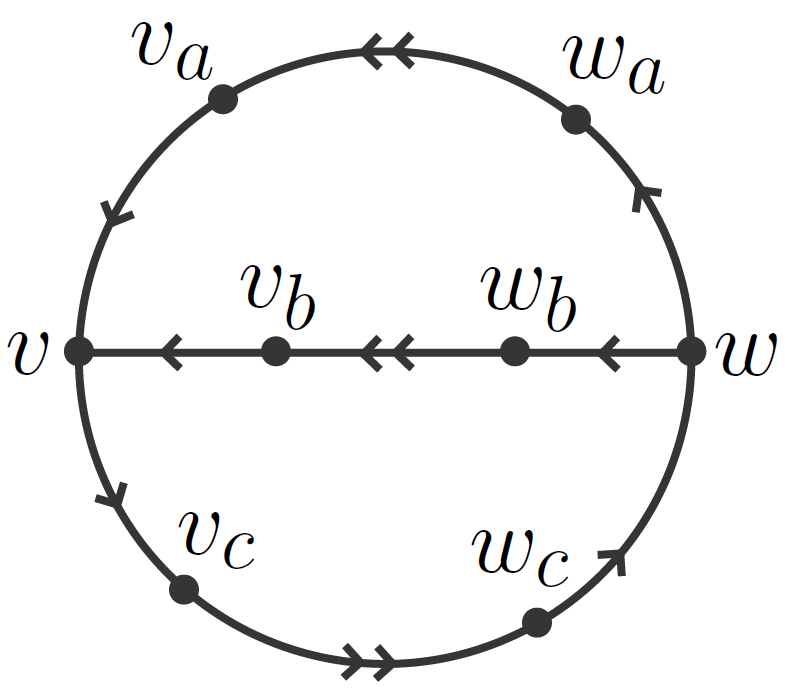}
\caption{Digraph $\theta(a,b,c)$ and some distinguished vertices.}
\label{fig:thetaConLetras}
\end{figure}


Furthermore, observe that $H$ contains $n+1$ arcs. Given that a Type $5$ digraph comprises $n+3$ arcs, the addition of both specified arcs is necessary, resulting in a unique Type $5$ configuration.
\end{proof}

\newpage
The following corollaries are direct consequences of the previous demonstration.

\begin{cor}
Let $D$ be a Type $5$ digraph and $r \leq s \leq t<n$ the sizes of the cycles in it.
Then, \[r+s+t=2n.\]
\end{cor}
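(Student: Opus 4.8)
The final statement is the Corollary asserting that for a Type $5$ digraph $D$ with cycle sizes $r \leq s \leq t < n$, one has $r+s+t = 2n$. The plan is to extract this directly from the structural analysis carried out in the proof of Proposition~\ref{propTipo5}, since the corollary is explicitly flagged as a direct consequence of that demonstration.

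First I would recall the concrete construction of a Type $5$ digraph: $D_5 = \vec{C}_n \cup \{e, e', e''\}$ with $e = (1,i)$, $e' = (i-1,j)$, $e'' = (j-1,2)$, and that the three induced cycles are $\vec{C}_{n-(i-2)}$, $\vec{C}_{n-(j-i)}$, and $\vec{C}_{j-2}$, so that $\{r,s,t\} = \{n-(i-2),\, n-(j-i),\, j-2\}$ as a multiset. Summing these three values gives $(n - i + 2) + (n - j + i) + (j - 2) = 2n$, which is the claim. This is essentially a one-line computation once the cycle sizes are written in terms of $i$ and $j$.

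Alternatively, and perhaps more in the spirit of "a direct consequence of the previous demonstration," I would invoke the counting argument already made inside the proof of Proposition~\ref{propTipo5}: every vertex of a Type $5$ digraph lies on exactly two of the three cycles $\vec{C}_r$, $\vec{C}_s$, $\vec{C}_t$ (this is the observation that there underlies the identity $r+s+t=2n$). Given that fact, summing the cycle lengths counts each of the $n$ vertices exactly twice, so $r+s+t = 2n$. The only point requiring care is justifying the "exactly two" claim — one must check, from the description $e=(1,i)$, $e'=(i-1,j)$, $e''=(j-1,2)$ with $3 < i$, $i+1 < j \leq n$, that the arcs of $\vec{C}_n$ split into arcs lying on exactly two of the short cycles, and similarly for the three added arcs — but this is routine bookkeeping on the vertex ranges $[2,i-1]$, $[i,j-1]$, $[j,n]\cup\{1\}$ that the three cycles traverse along $\vec{C}_n$.

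I do not anticipate a genuine obstacle here; the content has already been done in Proposition~\ref{propTipo5}, and the corollary is just isolating the identity $r+s+t=2n$ as a standalone statement. The cleanest write-up simply substitutes the cycle sizes $n-(i-2)$, $n-(j-i)$, $j-2$ into the sum and simplifies, noting that these are exactly $r$, $s$, $t$ in some order.

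\begin{proof}
As established in the proof of Proposition~\ref{propTipo5}, every vertex of a Type $5$ digraph belongs to exactly two of its three induced cycles $\vec{C}_r$, $\vec{C}_s$, $\vec{C}_t$. Equivalently, writing $D_5 = \vec{C}_n \cup \{(1,i),(i-1,j),(j-1,2)\}$ with $3<i<i+1<j\leq n$, the three cycle sizes are $n-(i-2)$, $n-(j-i)$ and $j-2$, so that
\[
r+s+t = \bigl(n-(i-2)\bigr) + \bigl(n-(j-i)\bigr) + (j-2) = 2n.
\]
In either formulation, summing the lengths of the three cycles counts each of the $n$ vertices exactly twice, yielding $r+s+t = 2n$.
\end{proof}
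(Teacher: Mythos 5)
Your proof is correct and takes essentially the same route as the paper: the corollary is exactly the observation, made inside the proof of Proposition~\ref{propTipo5}, that each vertex lies on exactly two of the three cycles, so summing the cycle lengths counts every vertex twice. Your explicit check via the cycle sizes $n-(i-2)$, $n-(j-i)$, $j-2$ is a harmless (and correct) verification of the same identity.
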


\begin{cor}
Let $D$ and $D'$ be a Type $5$ digraph. Let $r \leq s \leq t<n$ and $r'\leq 's \leq t' <n$ be the sizes of the cycles in $D$ and $D'$ respectively.\\

$D$ and $D'$ are isomorphic if, and only if, $r=r'$, $s=s'$ and $t=t'$.
\end{cor}

\begin{ej}
\end{ej}
\noindent Digraphs $D_5(4,10)$ and $D_5(8,10)$ with $10$ vertices are cospectral and by Proposition \ref{propTipo5} we have that $D_5(4,10) \cong D_5(8,10)$.\\

In what follows, we will use the parameters $r\leq s\leq t<n$ to identify a Type $5$ digraph transitioning away from the previous notation $i,j$.

\begin{teo}\label{partial_tipo5}
Let $n \geq 3$ be an integer. Let $\rho_1$ and $\rho_2$ be the spectral radii of $D_5(r,s,t)$ and $D_5(r',s',t')$. If one of the following holds:
\begin{itemize}
    \item $r\leq r'\leq s'\leq t'\leq s \leq t$, 
    \item $r\leq  s \leq r'\leq s' \leq t' \leq t$. 
\end{itemize}

then $\rho(D_5(r,s,t)) < \rho(D_5(r',s',t'))$.
\end{teo}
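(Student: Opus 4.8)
The plan is to mimic the proof of Theorem~\ref{teo:inftyDCS} (and its Type~$1a$ and Type~$2$ analogues): compare the two characteristic polynomials directly. Put $p_1(x)=x^n-x^{n-r}-x^{n-s}-x^{n-t}-2$ and $p_2(x)=x^n-x^{n-r'}-x^{n-s'}-x^{n-t'}-2$, the characteristic polynomials of $D_5(r,s,t)$ and $D_5(r',s',t')$. Since the only proper induced strongly connected subdigraphs of a Type~$5$ digraph are cycles (spectral radius $1$), Perron--Frobenius gives that $\rho_i\in(1,+\infty)$ is the unique root of $p_i$ there, with $p_i<0$ on $(1,\rho_i)$ and $p_i>0$ on $(\rho_i,+\infty)$. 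First I would invoke the corollary of Proposition~\ref{propTipo5}, namely $r+s+t=r'+s'+t'=2n$, so that the exponents $n-r,n-s,n-t$ of $p_1$ sum to $n$ and likewise for $p_2$. Hence
\[
h(x):=p_2(x)-p_1(x)=\bigl(x^{n-r}+x^{n-s}+x^{n-t}\bigr)-\bigl(x^{n-r'}+x^{n-s'}+x^{n-t'}\bigr)
\]
has no $x^{n}$-term and no constant term: it is a signed sum of six monomials whose exponents form two triples of equal sum.

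The crucial --- and I expect hardest --- step is to show $h$ has a fixed sign on $(1,+\infty)$. The conceptual route is Karamata's inequality: for each fixed $x>1$ the map $\alpha\mapsto x^{\,n-\alpha}$ is strictly convex, and under either hypothesis the decreasing triple $(t,s,r)$ majorizes $(t',s',r')$ (both summing to $2n$) --- in the first case because $t\ge t'$ and $s\ge s'$, in the second because $t\ge t'$ while $t+s\ge t'+s'$ is equivalent to $r\le r'$, which holds since $r\le s\le r'$. A hands-on alternative, in the spirit of the factorization used for $\infty$-digraphs, is to list the six exponents in decreasing order $a\ge b\ge c\ge d\ge e\ge f$, set $u:=c-e\ge0$, $v:=d-f\ge0$ (so $a=b+u+v$ by the sum condition), and check the telescoping identity
\[
h(x)=\bigl(x^{\,b+u}-x^{f}\bigr)\bigl(x^{v}-1\bigr)+\bigl(x^{b}-x^{e}\bigr)\bigl(x^{u}-1\bigr),
\]
each of whose four factors is $\ge0$ for $x\ge1$ (using $b\ge e$ and $b\ge f$, i.e.\ $r'\le s$ and $r'\le t$); an analogous identity, with a different grouping of the monomials, covers the second hypothesis. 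In either case $h$ has constant sign on $(1,+\infty)$, and is nonzero there whenever $(r,s,t)\ne(r',s',t')$.

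It remains to see that $p_1$ (equivalently $p_2$) is strictly increasing on $(1,+\infty)$, just as in Theorem~\ref{teo:inftyDCS}: the coefficients of $p_1'(x)=nx^{n-1}-(n-r)x^{n-r-1}-(n-s)x^{n-s-1}-(n-t)x^{n-t-1}$ change sign once, so $p_1'$ has exactly one positive root by Descartes' rule, and $p_1'(1)=n-(n-r)-(n-s)-(n-t)=2n-2n=0$ forces that root to be $x=1$; since $p_1'(x)\to+\infty$ we conclude $p_1'>0$ on $(1,+\infty)$. Then, exactly as in Theorem~\ref{teo:inftyDCS}, evaluating $p_1$ at $\rho_2$ gives $p_1(\rho_2)=p_2(\rho_2)-h(\rho_2)=-h(\rho_2)$, whose sign we have determined; combined with the strict monotonicity of $p_1$ this locates $\rho_2$ with respect to $\rho_1$ and yields the claimed strict inequality (assuming, as one must, that $(r,s,t)\ne(r',s',t')$; the sign of $h$ should be tracked carefully to fix the direction of the final inequality). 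The sign of $h$ is the only genuinely delicate point; everything else is the routine machinery already deployed in Section~\ref{sec_Finfinito}.
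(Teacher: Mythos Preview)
Your overall strategy matches the paper's exactly: form $h=p_1-p_2$ (the paper's sign convention), show $h$ has constant sign on $(1,+\infty)$, prove $p_1'>0$ there via Descartes' rule together with $p_1'(1)=r+s+t-2n=0$, and conclude. The only divergence is in how the sign of $h$ is obtained. You invoke Karamata's majorization inequality (or a telescoping identity), whereas the paper simply reuses Descartes' rule on $h$ itself: under either bullet-point hypothesis the coefficient sequence of $h$ has at most two sign changes, hence at most two positive roots counted with multiplicity; since $h(1)=0$ and $h'(1)=0$ (both exponent triples $n-r,n-s,n-t$ and $n-r',n-s',n-t'$ sum to $n$), the double root at $x=1$ exhausts them and $h$ keeps its sign on $(1,+\infty)$. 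This is shorter and needs nothing beyond the tools already deployed in Section~\ref{sec_Finfinito}; your Karamata route, on the other hand, explains \emph{why} the two interleaving hypotheses are precisely the orderings that make $(t,s,r)$ majorize $(t',s',r')$, and it handles coinciding parameters uniformly. Your closing caveat about tracking the sign of $h$ to fix the direction of the final inequality is well placed.
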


\begin{proof}
Let $p_1(x)=x^n-x^{n-r}-x^{n-s}-x^{n-t}-2$ and $p_{2}(x)=x^n-x^{n-r'}-x^{n-s'}-x^{n-t'}-2$ be
the characteristic polynomials of $D_5(r,s,t)$ and $D_5(r',s',t')$.
Denote $\rho_1=\rho(D_5(r,s,t))$ and $\rho_2=\rho(D_5(r',s',t'))$.\\ 

If $h(x)= p_{1}(x)-p_2(x)$, then
\[h(x)=-x^{n-r}-x^{n-s}-x^{n-t}+x^{n-r'}+x^{n-s'}+x^{n-t'}.\]
Let us see that $h(x)< 0$ for all $x>1$. The Descartes' rule of signs implies that $h(x)$ has no more than two positive roots (counted with multiplicity). Moreover, $h(1)=0$ and $h'(1)=-(n-r)-(n-s)-(n-t)+(n-r')+(n-s')+(n-t')=0$ implying that $1$ is a double root of $h$.\\

Since $\rho_2>1$, it follows that
\[0> h(\rho_2)=p_{1}(\rho_2)-p_2(\rho_2)=p_1(\rho_2).\]
Hence, it will be enough to prove that $p_1$ is strictly increasing in the interval $(1, +\infty)$, since $p_1(\rho_2) <0 = p_1(\rho_1)$.\\

The derivative of $p_1(x)$ is the polynomial 

$$p_1'(x)= nx^{n-1}-(n-r)x^{n-r-1}-(n-s)x^{n-s-1}-(n-t)x^{n-t-1}$$ 

which has exactly one
positive root by Descartes’ rule. This root should be equal to $1$ given that $p_1'(1)=n-(n-r)-(n-s)-(n-t)=r+s+t-2n=0$. Then, $p_1'(x)>0$ for all $x$ in the interval $(1,+\infty)$. Thus, $p_1$ is strictly increasing, completing the proof.
\end{proof}\\

Building upon the insights gained from the previous demonstration, we are now in a position to establish the following result:

\begin{cor}
\label{cor:Tipo5rCte}
Let $n\geq 3$ be an integer. Let $\rho=\rho(D_5(r,s,t))$ and ${\rho}'=\rho(D_5(r',s',t'))$, both with $n$ vertices. If $r=r'$ and $s< s'$ then $\rho < {\rho}'$.
\end{cor}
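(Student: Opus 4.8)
The statement to prove is Corollary~\ref{cor:Tipo5rCte}: if $D_5(r,s,t)$ and $D_5(r',s',t')$ both have $n$ vertices, $r = r'$, and $s < s'$, then $\rho(D_5(r,s,t)) < \rho(D_5(r',s',t'))$. The plan is to mimic the comparison argument of Theorem~\ref{partial_tipo5} essentially verbatim, replacing the two chained-inequality hypotheses by the single hypothesis $r=r'$, $s<s'$. First I would recall that both digraphs have $n$ vertices so that $r+s+t = 2n = r'+s'+t'$ by the corollary preceding this one; since $r=r'$, this forces $t > t'$ (as $s<s'$). Thus the exponents satisfy $n-r = n-r'$, $n-s > n-s'$, and $n-t < n-t'$.

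Then I would set $h(x) = p_1(x) - p_2(x)$ where $p_1, p_2$ are the characteristic polynomials of $D_5(r,s,t)$ and $D_5(r',s',t')$; the $x^n$, $-x^{n-r}$ and $-2$ terms cancel, leaving
\[
h(x) = -x^{n-s} - x^{n-t} + x^{n-s'} + x^{n-t'}.
\]
As in Theorem~\ref{partial_tipo5}, Descartes' rule of signs gives at most two positive roots; $h(1) = 0$ and $h'(1) = -(n-s)-(n-t)+(n-s')+(n-t') = (s+t) - (s'+t') = 0$ (using $r=r'$ and $r+s+t=r'+s'+t'$), so $1$ is a double root and $h$ has no other positive roots. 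Hence $h$ has constant sign on $(1,+\infty)$; evaluating at a large $x$, or noting that the dominant surviving monomial among the four is $x^{n-s'}$ (since $s' = \min\{s,s',t,t'\}$ here, because $s < s'$ is false — wait, $s < s'$ so actually $s$ is smaller; let me instead check the sign via, say, comparing $-x^{n-s}+x^{n-s'}$ near $+\infty$: $n-s > n-s'$ so this tends to $-\infty$), one finds $h(x) < 0$ for all $x > 1$. Therefore $p_1(\rho_2) = h(\rho_2) + p_2(\rho_2) = h(\rho_2) < 0$ since $\rho_2 > 1$ is a root of $p_2$.

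Finally I would invoke the monotonicity fact already established in the proof of Theorem~\ref{partial_tipo5}: for any Type $5$ digraph of order $n$, its characteristic polynomial is strictly increasing on $(1,+\infty)$, because its derivative has exactly one positive root (Descartes) which must be $1$ (as $p_1'(1) = r+s+t-2n = 0$). Since $p_1$ is strictly increasing on $(1,+\infty)$ and $p_1(\rho_2) < 0 = p_1(\rho_1)$ with $\rho_1, \rho_2 > 1$, we conclude $\rho_2 > \rho_1$, i.e. $\rho < \rho'$. The only point requiring care is getting the sign of $h$ right and confirming that $1$ being a double root of a function with at most two positive roots rules out a sign change on $(1,\infty)$; this is routine and identical in spirit to Theorem~\ref{partial_tipo5}, so I do not anticipate a genuine obstacle — the corollary is essentially a specialization of that theorem's technique rather than of its statement.
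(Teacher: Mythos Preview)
Your proposal is correct and is exactly the argument the paper has in mind: the paper does not write out a separate proof for this corollary but simply states it ``building upon the insights gained from the previous demonstration,'' i.e.\ by rerunning the Theorem~\ref{partial_tipo5} argument with the cancellation $r=r'$ and the identity $s+t=s'+t'$. Your computation is right (the leading surviving term of $h$ is $-x^{n-s}$, so $h<0$ on $(1,\infty)$, as you conclude after your mid-stream self-correction), and the monotonicity of $p_1$ on $(1,\infty)$ via $p_1'(1)=r+s+t-2n=0$ finishes it.
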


\begin{cor}
\label{cor:ordenLexTipo5rCte}
The class of digraphs \[\mathcal{D}(r_0)=\{D_5(r_0,s,t): s,t \text{ non negatives}\}.\]
is DCS.
\end{cor}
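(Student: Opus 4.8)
The plan is to fix $r_0$ and show that within the class $\mathcal{D}(r_0)$, any two non-isomorphic members have distinct complementarity spectra. Since every Type $5$ digraph in $\mathcal{SCD}_3$ has complementarity spectrum $\{0,1,\rho(D_5)\}$, two members of $\mathcal{D}(r_0)$ are complementarity cospectral if and only if they have the same spectral radius. By the corollaries following Proposition~\ref{propTipo5}, a Type $5$ digraph is determined up to isomorphism by the triple $(r,s,t)$ of cycle sizes, with $r\le s\le t<n$ and $r+s+t=2n$. Therefore, once $r=r_0$ is fixed, the digraph is determined by $s$ alone (since then $t=2n-r_0-s$), and it remains to check that the map $s\mapsto \rho(D_5(r_0,s,t))$ is injective.

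First I would make precise the range of admissible $s$: given $r_0$ and $n$, we need $r_0\le s\le t=2n-r_0-s$, i.e. $r_0\le s\le n-\tfrac{r_0}{2}$, together with $t<n$. For any two admissible values $s<s'$ in this range (with $r=r'=r_0$), Corollary~\ref{cor:Tipo5rCte} directly gives $\rho(D_5(r_0,s,t))<\rho(D_5(r_0,s',t'))$. Hence the spectral radii are pairwise distinct, so distinct members of $\mathcal{D}(r_0)$ are never complementarity cospectral.

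Assembling the argument: suppose $D_5(r_0,s,t)$ and $D_5(r_0,s',t')$ lie in $\mathcal{D}(r_0)$ and are complementarity cospectral. Then $\{0,1,\rho(D_5(r_0,s,t))\}=\{0,1,\rho(D_5(r_0,s',t'))\}$, so the spectral radii agree. If $s\ne s'$, say $s<s'$, Corollary~\ref{cor:Tipo5rCte} forces a strict inequality between the spectral radii, a contradiction; hence $s=s'$, and then $t=2n-r_0-s=2n-r_0-s'=t'$. By the corollary characterizing isomorphism of Type $5$ digraphs via cycle sizes, the two digraphs are isomorphic. This shows $\mathcal{D}(r_0)$ is DCS.

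I do not expect a serious obstacle here: the substance of the result is already contained in Corollary~\ref{cor:Tipo5rCte} (monotonicity of $\rho$ in $s$ when $r$ is fixed) and in the isomorphism classification of Type $5$ digraphs by cycle sizes. The only mild subtlety is bookkeeping — confirming that fixing $r=r_0$ within a fixed vertex count $n$ really does reduce the parameter space to the single variable $s$, using $r+s+t=2n$ — and making sure the hypothesis $r=r'$ of Corollary~\ref{cor:Tipo5rCte} is exactly what the definition of $\mathcal{D}(r_0)$ supplies. Everything else is a short deduction.
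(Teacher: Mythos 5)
Your argument is correct and is essentially the route the paper intends: reduce cospectrality in $\mathcal{D}(r_0)$ (at fixed order $n$) to equality of spectral radii, use $r+s+t=2n$ to make $s$ the only free parameter, invoke the strict monotonicity of Corollary~\ref{cor:Tipo5rCte} to get injectivity of $s\mapsto\rho$, and finish with the classification of Type $5$ digraphs by their cycle sizes. Note only that what matters is the strictness of the inequality in Corollary~\ref{cor:Tipo5rCte}, not its direction, so your proof is unaffected by which of $\rho,\rho'$ is larger.
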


Analogous results could be established for \[\mathcal{D}(s_0)=\{D_5(r,s_0,t): r,t \text{ non negatives}\} \]
\[ \mathcal{D}(t_0)=\{D_5(r,s,t_0): r,s \text{ non negatives}\}.\]

However, there are Type $5$ digraphs whose spectral radii are not comparable using the previous Lemma, for example $D_5(6,10,12)$ and $D_5(8,9,11)$, both with $14$ vertices.\\


\begin{teo}
\label{conj:rama}
Let $n\in\N$, $5\leq n\leq 200$, $4\leq r_k \leq s_k \leq t_k < n$, such that $r_k + s_k + t_k = 2n$, for $k = 1, 2$ and $(r_1, s_1, t_1) \neq (r_2, s_2, t_2)$. 
If $\rho_k$ is the only real positive root of $x^n - x^{n - r_k} - x^{n - s_k} - x^{n - t_k} - 2$, for $k = 1, 2$, then $\rho_1 \neq \rho_2$.
\end{teo}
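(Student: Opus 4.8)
The statement to prove is a \emph{computer-verified} assertion, quantified over the finite range $5 \le n \le 200$: for each such $n$, no two distinct admissible triples $(r,s,t)$ with $4 \le r \le s \le t < n$ and $r+s+t = 2n$ produce the same Perron root $\rho$ of $g_{r,s,t}(x) = x^n - x^{n-r} - x^{n-s} - x^{n-t} - 2$. The natural strategy mirrors Section~\ref{sec_Ftheta}: reduce the equality $\rho_1 = \rho_2$ to an equality of minimal polynomials $\Irr_{\rho_1} = \Irr_{\rho_2}$, which is decidable. First I would observe that each $g_{r,s,t}$ has exactly one positive real root $\rho > 1$ (Descartes' rule of signs on $g_{r,s,t}$ and its derivative, exactly as in the proof of Theorem~\ref{partial_tipo5}), so $\rho_k$ is well-defined. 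Then $\rho_1 = \rho_2$ would force $\Irr_{\rho_1}(x) = \Irr_{\rho_2}(x)$, since the minimal polynomial of a real number is unique.

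The core of the proof is then a finite computation. For each $n$ in the range, enumerate all triples $(r,s,t)$ with $4 \le r \le s \le t < n$ and $r+s+t = 2n$; this is a set of size $O(n^2)$, so the total is on the order of a few million triples over the whole range — entirely feasible. For each triple, factor $g_{r,s,t}$ over $\Qq$ (equivalently, over $\Zz$ by Gauss's lemma) and extract the unique irreducible factor having a positive real root greater than $1$; call it $q_{r,s,t}$. One then checks that the map $(r,s,t) \mapsto q_{r,s,t}$ is injective on each fiber $\{r+s+t = 2n\}$. If so, distinct triples give distinct minimal polynomials, hence distinct Perron roots, and the theorem follows. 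I would present this as: "A direct computation, carried out in [some computer algebra system], verifies that for every $n$ with $5 \le n \le 200$ the polynomials $q_{r,s,t}$ are pairwise distinct across admissible triples; combined with the uniqueness of the minimal polynomial and the fact that each $g_{r,s,t}$ has a unique real root exceeding $1$, this establishes $\rho_1 \neq \rho_2$."

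The main obstacle — or rather, the main subtlety — is twofold. First, one must be careful that the \emph{right} irreducible factor is being compared: $g_{r,s,t}$ may factor nontrivially (e.g.\ it could share the cyclotomic-type factor $x^2 - x + 1$ behavior seen for $f_{k,m}$, or split off other factors), and only the factor containing the Perron root is the relevant invariant; two different $g$'s could in principle share a spurious common factor while still having different Perron factors, or conversely. The enumeration must therefore isolate $q_{r,s,t}$ correctly, which is why I would specify "the irreducible factor with a real root in $(1,\infty)$" — by Perron–Frobenius and Lemma~\ref{lem:sub} this factor is unique and well-defined. Second, there is the matter of trusting the computation: this is unavoidable for a bounded-range statement of this kind, and the honest thing is to state explicitly that the result is established by exhaustive verification and to reference the code. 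Unlike the $\theta$-digraph case (Theorem~\ref{teo:rama}), where irreducibility of $f_{k,m}$ is governed by the clean criterion of Theorem~\ref{thm.paper}, the four-term polynomial $x^n - x^{n-r} - x^{n-s} - x^{n-t} - 2$ has no comparably simple factorization theory available — which is precisely why the authors restrict to $n \le 200$ and phrase the general case as a conjecture rather than a theorem.
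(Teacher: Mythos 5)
Your proposal is correct and follows essentially the same route as the paper: for each $n$ in the finite range, enumerate all admissible triples, compute the irreducible (minimal) polynomial of the positive root of each four-term polynomial, and verify by exhaustive comparison in a computer algebra system that no two distinct triples yield the same minimal polynomial. The extra care you take in isolating the unique irreducible factor with a root in $(1,\infty)$ and in justifying the well-definedness of $\rho_k$ is a sound (and slightly more explicit) presentation of exactly what the paper's Sage computation does.
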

\begin{proof}


To verify this claim, we utilized the computational approach delineated in \cite{rama_code}, executed within the Sage mathematical software environment \cite{sagemath}.

Our verification strategy mirrors the approach adopted in Theorem~\ref{teo:rama}. For a specified value of $n$ within the theorem's hypotheses, we enumerate all feasible combinations of $r, s, t$ that also satisfy the theorem's conditions. Subsequently, we define the corresponding polynomial for each combination and determine the irreducible polynomial of its positive root. Upon comparing these polynomials, we observed no instances of repetition.
\end{proof}\\

{Based on this last result, on the partial ordering given in Theorem \ref{partial_tipo5}, and other experimental evidence, we conjecture that this is true for all $n\geq 5$.}

\begin{conjecture}
Let $n\in\N$, $5\leq n$, $4\leq r_k \leq s_k \leq t_k < n$, such that $r_k + s_k + t_k = 2n$, for $k = 1, 2$ and $(r_1, s_1, t_1) \neq (r_2, s_2, t_2)$. 
If $\rho_k$ is the only real positive root of $x^n - x^{n - r_k} - x^{n - s_k} - x^{n - t_k} - 2$, for $k = 1, 2$, then $\rho_1 \neq \rho_2$.
\end{conjecture}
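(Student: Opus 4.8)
The plan is to follow the same irreducible-polynomial strategy that worked for $\theta$-digraphs in Theorem~\ref{teo:rama}, but now applied to the quinquenomial $g_k(x) = x^n - x^{n-r_k} - x^{n-s_k} - x^{n-t_k} - 2$ attached to a Type~$5$ digraph with cycle sizes $r_k \le s_k \le t_k$. First I would observe that since $\rho_k > 1$ is the Perron root and the non-leading coefficients are all negative, $\rho_k$ is a root of the polynomial $q_k(x) = x^{r_k} - x^{r_k - (t_k - s_k)} - \cdots$ obtained after factoring; more precisely, writing $m_i = n - $ (exponent), the relevant object is $x^n - x^{a} - x^{b} - x^{c} - 2$ with $0 \le a \le b \le c < n$ and, crucially, $a + b + c$ determined by $r+s+t = 2n$ (Corollary after Proposition~\ref{propTipo5}), so that the three cyclotomic-type exponents are not independent. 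The goal is: if $\rho_1 = \rho_2$ then $\Irr_{\rho_1} = \Irr_{\rho_2}$, and from the shared minimal polynomial recover $(r_1,s_1,t_1) = (r_2,s_2,t_2)$.

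The key steps, in order, would be: (1) establish an irreducibility criterion for $x^n - x^b - x^{b'} - x^{b''} - 2$ analogous to Theorem~\ref{thm.paper} — either prove it is always irreducible, or classify the finite list of reducible exceptions and show each reducible factor $\widetilde g$ still determines its exponent data; (2) use the constant term $-2$ together with Lemma~\ref{lema4}-style Eisenstein reasoning at the prime $2$ to severely restrict factorizations (the presence of the constant $2$ is a genuine advantage here, since $2$ divides the constant term but not the leading coefficient, so an Eisenstein-type or Newton-polygon argument at $p=2$ should force a large irreducible factor carrying the $x^n$ term and hence $\rho$); (3) given $\Irr_{\rho_1} = \Irr_{\rho_2}$, read off the exponents of the minimal polynomial (its support, i.e.\ which monomials appear and with which signs) to conclude the multisets of exponents agree, hence $r_1 = r_2$, $s_1 = s_2$, $t_1 = t_2$, contradicting $(r_1,s_1,t_1) \ne (r_2,s_2,t_2)$; (4) handle the boundary/degenerate cases where two of $a,b,c$ coincide (so fewer than four distinct monomials survive), mirroring the case split $a=b$ vs.\ $b>a$ in Lemma~\ref{lema5} and Theorem~\ref{teo:rama}.

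The main obstacle I expect is step (1): unlike the trinomial $f_{k,m} = x^k - x^m - 1$, for which Ljunggren's theorem gives a clean and essentially complete reducibility classification, there is no off-the-shelf analogue for quadrinomials-plus-constant of the shape $x^n - x^a - x^b - x^c - 2$, and establishing one — even restricted to the exponent patterns arising from $r+s+t = 2n$ — may require a dedicated Newton-polygon analysis at $p = 2$ together with an argument ruling out cyclotomic factors (the latter because $x^n - x^a - x^b - x^c - 2$ evaluated at a root of unity cannot vanish, since the constant $-2$ makes the absolute value too large unless many terms align, which one can bound). A secondary difficulty is that, even with irreducibility in hand, recovering $(r,s,t)$ from $\Irr_\rho$ is only immediate when the four monomials $x^n, x^a, x^b, x^c$ are distinct; the coincidence cases $a=b$, $b=c$, or $a=b=c$ collapse coefficients (e.g.\ $-x^a - x^b$ becomes $-2x^a$), and one must check these collapsed polynomials are still pairwise distinct across different cycle-size triples and distinct from the non-collapsed ones — a finite bookkeeping exercise, but one that must be done carefully. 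If a full irreducibility proof proves elusive, the fallback is exactly what the authors did in Theorem~\ref{conj:rama}: verify the finite range computationally and leave the general statement as the stated conjecture.
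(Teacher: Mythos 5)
Your proposal is not a proof but a research plan, and the essential step is missing: the statement you are asked to prove is left \emph{open} in the paper precisely because no one (including the authors) has the ingredient you label as step (1). The paper's only contribution toward this statement is Theorem~\ref{conj:rama}, a computational verification for $5 \le n \le 200$ carried out in Sage by enumerating all admissible triples $(r,s,t)$ with $r+s+t=2n$, computing the minimal polynomial of the positive root of each $x^n - x^{n-r} - x^{n-s} - x^{n-t} - 2$, and checking that no two coincide --- together with the partial ordering of spectral radii in Theorem~\ref{partial_tipo5}. There is no analogue of Ljunggren's classification (Theorem~\ref{thm.paper}) for polynomials of the shape $x^n - x^a - x^b - x^c - 2$, and without one, the chain ``$\rho_1=\rho_2 \Rightarrow \Irr_{\rho_1}=\Irr_{\rho_2} \Rightarrow$ exponent data agree'' cannot be completed: if the polynomial factors, you cannot read the exponents $a,b,c$ off the support of the unknown irreducible factor containing $\rho$.

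Two of your proposed tools also do not work as stated. Eisenstein at $p=2$ fails outright because the middle coefficients are $-1$, not divisible by $2$; and the Newton polygon at $p=2$ only yields a factorization pattern over $\Qq_2$ (a segment of slope $0$ of length $t$ and one of slope $-1/(n-t)$), which constrains $2$-adic valuations of roots but does not force a large irreducible factor over $\Qq$ containing $\rho$. Similarly, the cyclotomic-factor exclusion is not automatic: at a root of unity $\zeta$ one has $|\zeta^n - \zeta^a - \zeta^b - \zeta^c| \le 4$, so the value $2$ is attainable in principle and the alignment cases would have to be analyzed, not just bounded. You correctly identify all of these obstacles yourself and correctly name the fallback --- finite computational verification plus a conjecture for general $n$ --- which is exactly what the paper does. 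So the honest assessment is that your proposal reproduces the authors' (unrealized) strategy and their realized fallback, but does not close the gap; the statement remains a conjecture.
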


{Observe that the validity of this conjecture implies that the class of Type 5 digraphs is DCS.}

\section{Conclusions}\label{sec_conclu}

In this paper, we have studied the families of digraphs with three complementarity eigenvalues and described which of these seven families are determined by their complementarity spectrum (DCS). This is, that every two non-isomorphic digraphs in a DCS family have a different complementarity spectrum.\\

We proved that the families: $\infty$, $\theta$, and Type 2 digraphs are DCS.\\

Also, since the family of Type 1 digraphs has a natural symmetry, we have given a partition of this Type 1 family into two other DCS sub-families.\\

Likewise, examples were provided establishing that Type 3 and Type 4 families are not DCS.\\

Furthermore, partial results were given for the Type 5 family, which seem to indicate that it is also DCS. This last statement is posed as a conjecture at the end of the paper.\\

Since we focused on digraphs with three complementarity eigenvalues, the study of the spectral radius was crucial in our results. This allowed to connect our work with other papers outside the complementarity spectrum literature. Indeed, the study of spectral radii of bicyclic digraphs was studied in \cite{Lin2012}, with partial results for some specific families. The present manuscript extends those results, proving, for example, that the $\theta$-digraphs, which are bicyclic, can in fact be distinguished by the spectral radii.\\

The theorems on the spectral radii of other families presented in this paper may be the initial results towards the study of spectral radii for digraphs with more than two cycles.

\bibliographystyle{elsarticle-num}
\bibliography{biblio}





\end{document}